\documentclass[reqno,10pt]{amsart}

\usepackage[a4paper,left=35mm,right=35mm,top=30mm,bottom=30mm,marginpar=25mm]{geometry}

\usepackage{tikz}
\usepackage{enumitem}
\usepackage{amsmath}
\usepackage{amssymb}
\usepackage{amsthm}
\usepackage{eurosym}
\usepackage{graphicx}
\usepackage{epsfig}
\usepackage{dsfont}
\usepackage[nocompress, space]{cite}

\usepackage[displaymath,mathlines]{lineno}

\allowdisplaybreaks

\usepackage{hyperref}
\usepackage{pdfpages}

\usepackage{ifthen}
\usepackage[dvipsnames]{xcolor}
\usepackage[draft]{changes}
\definechangesauthor[name={Diogo}, color=blue]{DG}
\definechangesauthor[name={Reverse}, color=red]{R}
\makeindex

\newcommand{\essinf}{\operatorname{essinf}}
\renewcommand{\div}{\operatorname{div}}

\usepackage{mathtools}

\newcommand{\Rr}{{\mathbb{R}}}

\newcommand{\Nn}{{\mathbb{N}}}

\newcommand{\Tt}{{\mathbb{T}}}

\newcommand{\tiu}{{\widetilde{u}}}
\newcommand{\tim}{{\widetilde{m}}}
\newcommand{\mv}{{m_\sigma}}
\newcommand{\uv}{{u_\sigma}}

\newcommand{\Ii}{{\mathcal{I}}}

\def\dx{{\rm d}x}

\def\dt{{\rm d}t}
\def\geq{\geqslant}

\numberwithin{equation}{section}

\newtheoremstyle{thmlemcorr}{10pt}{10pt}{\itshape}{}{\bfseries}{.}{10pt}{{\thmname{#1}\thmnumber{
#2}\thmnote{ (#3)}}}
\newtheoremstyle{thmlemcorr*}{10pt}{10pt}{\itshape}{}{\bfseries}{.}\newline{{\thmname{#1}\thmnumber{
#2}\thmnote{ (#3)}}}
\newtheoremstyle{defi}{10pt}{10pt}{\itshape}{}{\bfseries}{.}{10pt}{{\thmname{#1}\thmnumber{
#2}\thmnote{ (#3)}}}
\newtheoremstyle{remexample}{10pt}{10pt}{}{}{\bfseries}{.}{10pt}{{\thmname{#1}\thmnumber{
#2}\thmnote{ (#3)}}}
\newtheoremstyle{ass}{10pt}{10pt}{}{}{\bfseries}{.}{10pt}{{\thmname{#1}\thmnumber{
A#2}\thmnote{ (#3)}}}

\theoremstyle{thmlemcorr}
\newtheorem{theorem}{Theorem}
\numberwithin{theorem}{section}
\newtheorem{lemma}[theorem]{Lemma}

\theoremstyle{thmlemcorr*}
\newtheorem{theorem*}{Theorem}
\newtheorem{lemma*}[theorem]{Lemma}
\newtheorem{corollary*}[theorem]{Corollary}
\newtheorem{proposition*}[theorem]{Proposition}
\newtheorem{problem*}[theorem]{Problem}
\newtheorem{conjecture*}[theorem]{Conjecture}

\theoremstyle{defi}
\newtheorem{definition}[theorem]{Definition}
\newtheorem{hyp}{Assumption}

\newtheorem{problem}{Problem}

\theoremstyle{remexample}
\newtheorem{remark}[theorem]{Remark}
\newtheorem{example}[theorem]{Example}

\newtheorem{cor}[theorem]{Corollary}

\theoremstyle{ass}

\begin{document}

\title{Weak-Strong Uniqueness for Second-Order Mean-Field Games}

\author[R. Ferreira]{Rita Ferreira}
\address[R. Ferreira]{
        King Abdullah University of Science and Technology (KAUST),
        CEMSE Division, Thuwal 23955-6900, Saudi Arabia.}
\email{rita.ferreira@kaust.edu.sa}

\author[D.  Gomes]{Diogo A. Gomes}
\address[D. A. Gomes]{
        King Abdullah University of Science and Technology (KAUST), CEMSE Division, Thuwal 23955-6900, Saudi Arabia.}
\email{diogo.gomes@kaust.edu.sa}

\author[B.  Majrashi]{Bashayer H. Majrashi}
\address[B.  Majrashi]{
        King Abdullah University of Science and Technology (KAUST),
        CEMSE Division, Thuwal 23955-6900, Saudi Arabia.}
\email{bashayer.majrashi@kaust.edu.sa} 

\begin{abstract}
We extend the weak–strong uniqueness principle for mean-field game (MFG) systems to a broad class of second-order stationary and time-dependent problems. Under standard monotonicity, growth, and coercivity assumptions on the Hamiltonian, and relying strictly on the integrability exponents guaranteed by the existing theory for monotone MFG systems, we show that any weak solution must coincide with a given strong solution. Our analysis covers models with spatially dependent scalar diffusion coefficients, using monotonicity arguments and a coefficient-adapted mollification strategy to manage the variable diffusion terms. We extend this strategy to establish weak–strong uniqueness in the corresponding second-order, initial-terminal, time-dependent setting. 
Finally, to address the critical quadratic growth regime, we derive a new a priori second-order estimate for a stationary MFG system with logarithmic coupling. This estimate provides quantitative bounds on the solution in terms of the data, and yields weak--strong uniqueness in the range where the improved integrability yields $L^2$ control of the density.
Since numerical and approximation methods for MFGs naturally yield weak solutions in the monotonicity sense, whereas strong solutions are known to exist in many settings, our results identify any weak limit produced by such methods with the strong solution whenever one exists.
\end{abstract}

\maketitle

\section{Introduction}

Mean-field games (MFGs) provide a framework for describing and analyzing the limiting behavior of differential games with a large number of identical agents. 
These models were introduced independently in \cite{ll1, ll2, lasryMeanFieldGames2007} and in \cite{huangLargePopulationStochastic2006}. For comprehensive surveys on PDE methods, modeling, and regularity, see~\cite{cardaliaguetIntroductionMeanField2020,GS,GPV,MR3691806}.
We consider the following model. Each player controls a state variable evolving on the $d$-dimensional torus $\Tt^d$ and seeks to minimize a running cost that depends on its own trajectory and the probability distribution of the population. The resulting equilibrium is characterized by a coupled system comprising
a Hamilton–Jacobi equation for the value function $u$ and a Fokker–Planck (or continuity) equation for the density $m$.  While existence theory is well developed (as detailed in Sections~\ref{Stationary-FW} and \ref{Evolutionary-FW}), 
establishing uniqueness for low-regularity solutions remains one of the main difficulties.
Here, we establish a weak-strong uniqueness principle, which asserts that a weak solution within our functional framework must coincide with a strong solution whenever the latter exists. While this has been established for first-order models~\cite{ferreiraWeakstrongUniquenessSolutions2025}, extending it to second-order systems presents a technical difficulty: standard regularization techniques do not commute with spatially dependent diffusion operators. We resolve this by introducing a coefficient-adapted mollification strategy. This allows us to prove weak–strong uniqueness for both stationary and time-dependent second-order MFGs with monotone couplings and general scalar diffusion on \(\Tt^d\) and \(\Tt^d\times(0,T)\), respectively, as we now explain. Beyond monotonicity and growth, the uniqueness theorems also require a strictly positive strong density and the Hamiltonian-compatibility assumptions collected in Section~\ref{secAs}.

\subsection{Stationary Framework}\label{Stationary-FW}
We consider the following second-order MFG with a scalar diffusion coefficient. 
\begin{problem}\label{Sproblem}
Let $\mathbb{E}$ be either $\Rr^+$ or $\Rr^+_0$.
   Consider a \( C^2 \) Hamiltonian \( H :\Tt^d \times \mathbb{R}^d \times \mathbb{E} \rightarrow \mathbb{R} \) and let $a\in C^2(\Tt^d)$ be such that $\min_{\Tt^d} a >0$. We seek pairs
   $(u,m)$, with
   \(
u: \Tt^d \rightarrow \Rr\) and  \(m: \Tt^d \rightarrow \mathbb{E} \)
satisfying the following second-order stationary MFG on the \( d \)-dimensional torus $\Tt^d$: 
\begin{equation}\label{SMFG2}
    \begin{cases}
        -u + a(x) \Delta u - H(x, Du, m) = 0 \\
        m -\Delta(a(x) m) -\div (m D_p H(x, Du, m)) = 1.
\end{cases}
\end{equation} 
\end{problem}

Integrating the second equation over $\Tt^d$ shows that $m$ is a probability measure, using the standard normalization $\int_{\Tt^d}\dx=1$.
Example~\ref{eg:typicalH} in Section~\ref{secAs} provides typical Hamiltonians for the preceding problem: \eqref{H:power-coupling} for the case $\mathbb{E}=\Rr^+_0$, and \eqref{H:congestion} for $\mathbb{E}=\Rr^+$. Strong and weak solutions to Problem~\ref{Sproblem} are formalized in Definitions~\ref{def:strongsol-S} and~\ref{def:weaksol}, respectively, and the standing assumptions on the Hamiltonian, diffusion coefficient, and integrability exponents are collected in Section~\ref{secAs}. Our main uniqueness result for this stationary setting is Theorem~\ref{Th:uniq-stat}, which asserts that under Assumptions~\ref{2.1}--\ref{2.13}, any weak solution coincides with a given strong solution.

\begin{remark}\label{rmk:onsign}
Note that Equation \eqref{SMFG2} is written with the sign reversed relative to the standard viscosity formulation
for Hamilton–Jacobi (HJ) equations, 
\[
u - a(x)\,\Delta u + H(x,Du,m)=0,
\] 
so that the operator $A$ in Definition~\ref{def:weaksol} is monotone in the sense required by our weak formulation.
\end{remark}

\begin{remark}
Although Problem~\ref{Sproblem} and its evolutionary counterpart (Problem~\ref{Tproblem} below) are formulated for general non-negative densities ($\mathbb{E}=\mathbb{R}^+_0$), our weak–strong uniqueness analysis requires the strong solution to be strictly positive (see Assumption~\ref{2.2}). This uniform lower bound is essential to ensure the coercivity of the linearized operator (see Step~2 in the proof of Theorem~\ref{Th:uniq-stat}). In the absence of such positivity, uniqueness may fail; for instance, \cite{Gomes2016b} provides explicit examples of non-uniqueness
and the formation of vacuum regions in one-dimensional first-order MFG models. The strict positivity of $m$ for regular enough solutions to \eqref{SMFG2} is expected, in view of the following formal argument. Suppose $\min m = m(x_0) = 0$. Then $Dm(x_0) = 0$ and
$\Delta m(x_0) \geq 0$. Since $m(x_0) = 0$ and $Dm(x_0) = 0$,
both $\Delta(a(x)m)|_{x_0} = a(x_0)\Delta m(x_0)$ and
$\div(m D_p H)|_{x_0} = 0$, so the left-hand side of the
second equation in \eqref{SMFG2} evaluates to
$-a(x_0)\Delta m(x_0) \leq 0$, contradicting the
right-hand side being $1$. 
\end{remark}

We now define the reference notion of strong solution.
\begin{definition}\label{def:strongsol-S}
    Given \( r_1, \gamma_1 \geq 1 \), a pair \( (m, u) \in W^{1,r_1}(\Tt^d) \times W^{1,\gamma_1}(\Tt^d) \) with \( m \in \mathbb{E}\) a.e.~in \( \Tt^d \) is a strong solution to Problem~\ref{Sproblem} if \eqref{SMFG2} holds in \( \mathcal{D}'(\Tt^d) \).
\end{definition}

We assume the existence of strong solutions, which is known in a number of examples that satisfy our assumptions. Prior results for classical solutions initially focused on discounted problems with quadratic Hamiltonians and logarithmic couplings, particularly in low dimensions~\cite{GM}. This approach was subsequently expanded to encompass quasi-variational structures~\cite{GPM1} and extended mean-field games, where costs depend on the velocity field, via the continuity method~\cite{GPatVrt}. Further generalizations addressed broader classes of Hamiltonians~\cite{PV15} and models with spatially varying diffusion coefficients~\cite{BFl}, the latter being directly relevant to the scalar diffusion
$ a(x)$ considered in Problem~\ref{Sproblem}. In the context of congestion models, existence has been established through the continuation method and a priori estimates for both sub-quadratic~\cite{EvGom} and quadratic~\cite{GMit} Hamiltonians; furthermore, for the quadratic case with power-like interactions, existence has been established in the sense of entropy solutions using truncation methods~\cite{bocorsporr}. Variational techniques have also been utilized to address hard congestion in a broad class of MFGs~\cite{AMFS}.

We define weak solutions as follows.
\begin{definition}\label{def:weaksol}
The operator $A$ associated with Problem~\ref{Sproblem}
    is defined, for
    $(\eta, v) \in C^\infty(\Tt^d; \mathbb{E}) \times C^\infty(\Tt^d)$,
    by
\[
A \begin{bmatrix} \eta \\ v \end{bmatrix} = \begin{bmatrix} -v + a(x)\Delta v - H(x, Dv, \eta) \\ (\eta-1) - \Delta(a(x) \eta) - \div (\eta D_p H(x, Dv, \eta)) \end{bmatrix}.
\]
    Given $r,\gamma\ge1$, a pair \( (\tim, \tiu) \in L^r(\Tt^d) \times W^{1,\gamma}(\Tt^d) \) with $\tim\ge0$ a.e.~in \(\Tt^d\) is a weak solution to Problem \ref{Sproblem} if
    \begin{align}
    \label{weaksol}
    \left\langle \begin{bmatrix} \eta \\ v \end{bmatrix} - \begin{bmatrix} \tim \\ \tiu \end{bmatrix}, A \begin{bmatrix} \eta \\ v \end{bmatrix} \right\rangle &=
         \int_{\Tt^d} (\eta - \tilde{m})\big(-v +a(x)\Delta v- H(x, Dv, \eta)\big) \,\dx \nonumber\\
        &\quad+\int_{\Tt^d} (v - \tilde{u})\big(\eta-1 -\Delta( a(x)\eta)- \div (\eta D_p H(x, Dv, \eta) )\big) \,\dx \ge 0
    \end{align}
    for all \( (\eta, v) \in C^\infty(\Tt^d; \mathbb{E}) \times C^\infty(\Tt^d) \). Here, $\langle\cdot,\cdot\rangle$ denotes the duality pairing between $C^\infty(\Tt^d) \times C^\infty(\Tt^d)$ and its dual. For distributions induced by functions, this pairing is given by
\begin{equation}
    \label{dualstat}
    \left\langle \begin{bmatrix} \eta \\ v \end{bmatrix}, \begin{bmatrix} \tilde{\eta} \\ \tilde{v} \end{bmatrix} \right\rangle = \int_{\Tt^d} (\eta \tilde{\eta} + v \tilde{v}) \,\dx.
\end{equation}
\end{definition}

Under the Lasry--Lions monotonicity condition,
the structural conditions formalized in
Assumption~\ref{2.13},
the operator $A$ is monotone:
$\langle w_1 - w_2, Aw_1 - Aw_2 \rangle \geq 0$ for all admissible
pairs $w_1, w_2$.

The existence of weak solutions in the sense of Definition~\ref{def:weaksol} has been established through a variational-inequality framework. This approach relies on a monotonicity-preserving regularization combined with Minty's method to recover the solution as the regularization parameter vanishes. Specifically, this framework has been applied in~\cite{FG2}
to first- and second-order stationary monotone systems in the periodic setting,  with and without congestion, encompassing Problem~\ref{Sproblem},  and later 
extended 
to first-order systems with Dirichlet boundary conditions in~\cite{FGT1}.  More recently, this variational approach was extended to Banach spaces, allowing for the treatment of first-order problems with and without congestion~\cite{ferreiraSolvingMeanFieldGames2025}.
By shifting the analysis from Hilbert spaces to Banach spaces (such as $L^p$ or $W^{1,p}$), the authors avoid the need for high-order smoothing, can treat both standard power-growth Hamiltonians and models featuring congestion, and unify earlier existence results.

\begin{remark}[Terminology]
Throughout the paper, we use the term {strong solution} for the Sobolev/distributional notion formalized in Definition~\ref{def:strongsol-S}. This corresponds to what is often called a “weak solution” in MFG theory outside the context of monotone operators.
We reserve the term {weak solution} for the lower-regularity solutions obtained via the monotonicity/Minty framework formalized in Definition~\ref{def:weaksol}.
\end{remark}

\subsection{Time-dependent Framework}\label{Evolutionary-FW}
    Turning to the evolutionary setting, we consider the following time-dependent mean-field game system:
\begin{problem}\label{Tproblem}
Let $\mathbb{E}$ be either $\Rr^+$ or $\Rr^+_0$.
    Consider a \( C^2 \) Hamiltonian \( H :\Tt^d \times \mathbb{R}^d \times \mathbb{E} \rightarrow \mathbb{R} \) and let $a\in C^2(\Tt^d)$ be such that $\min_{\Tt^d} a >0$. Fix a terminal condition $u_T:\Tt^d\rightarrow \Rr$ and an initial probability distribution $m_0:\Tt^d\rightarrow \mathbb{R}^+$, assumed smooth to simplify the approximation argument, specifically, in Section~\ref{FBM} to construct smooth approximants preserving the trace constraints.  We seek pairs
   $(u,m)$, with $u\colon \Tt^d\times (0,T)\rightarrow \Rr$ and $m\colon \Tt^d\times (0,T)\rightarrow \mathbb{E}$
such that the following second-order time-dependent MFG holds
on $\Tt^d\times (0,T)$:
\begin{equation}\label{TMFG}
    \begin{cases}
        u_t+a(x)\Delta u - H(x, Du, m) = 0 \\
        m_t  -\Delta(a(x) m) - \div (m D_p H(x, Du, m)) = 0,
\end{cases}
\end{equation}
 subject to the initial and terminal conditions
\begin{equation}\label{TC}
   \begin{cases}
        u(x,T) = u_T(x), \\
        m(x,0) = m_0(x).
    \end{cases}
\end{equation}
\end{problem}

Strong and weak solutions to Problem~\ref{Tproblem} are formalized in Definitions~\ref{def:strongsol-T} and~\ref{Tweak}, respectively, with initial-terminal conditions \eqref{TC} imposed through the boundary constraints \eqref{TestB.C.} on the test functions. The corresponding weak--strong uniqueness result is Theorem~\ref{Th:uniq-time}, established under the same set of assumptions as in the stationary case.

\begin{remark}
\label{R4}
Similarly to the stationary case mentioned in Remark~\ref{rmk:onsign},  we adopt a sign convention for the Hamilton–Jacobi (HJ) equation that differs from the standard backward formulation:
\[
- u_t - a(x)\,\Delta u + H(x,Du,m)=0
\]
to align \eqref{TMFG} with the monotone-operator framework. Specifically, we reverse the signs in \eqref{TMFG}
so that the operator $B$ defined in Definition~\ref{Tweak} satisfies the monotonicity condition used throughout this paper.
\end{remark}

Strong solutions in the evolutionary setting are defined as follows.
\begin{definition}\label{def:strongsol-T}
    Given \( r_1, \gamma_1 \geq 1 \), a pair \( (m, u) \in W^{1,r_1}(\Tt^d\times(0,T)) \times W^{1,\gamma_1}(\Tt^d\times(0,T)) \) with \( m \in \mathbb{E}\) a.e.~in \( \Tt^d\times(0,T)\) 
    is a strong solution to Problem~\ref{Tproblem} if \eqref{TMFG} holds in \( \mathcal{D}'(\Tt^d\times(0,T)) \) and \eqref{TC} holds in the sense of traces.
\end{definition}

The existence of such strong solutions is well understood. Classical (smooth) solutions were established in early work for time-dependent models with nonlocal and smoothing couplings~\cite{ll2}. Later, 
existence was established for local couplings in the subquadratic \cite{gomesTimeDependentMeanFieldGames2015} and superquadratic \cite{gomesTimedependentMeanfieldGames2016a} cases using the nonlinear adjoint method to obtain Lipschitz estimates for the value function.
 These techniques were further refined to handle models with
logarithmic nonlinearities \cite{Gomes2015b}, where establishing
lower bounds on the density $m$ requires delicate a priori estimates.
 Existence results have also been established for other notions of
strong solutions. For instance, models with degenerate diffusion
(including the first-order limit) and local couplings were treated
via dual optimal-control problems and variational methods
in~\cite{cgbt}. Meanwhile, monotone MFGs with congestion, where agent mobility is restricted by high population density, were treated via a priori estimates and compactness in~\cite{Achdou2016}. Moreover, \cite{porretta2} establishes the existence and uniqueness
of strong solutions for locally coupled second-order MFGs by
developing a weak theory that characterizes solutions of the
Fokker--Planck equation as renormalized solutions under minimal
assumptions on the optimal drift.

Analogous to Definition~\ref{def:weaksol}, we define weak solutions for the evolutionary case using the space-time duality pairing.

\begin{definition}\label{Tweak}
     The operator $B$ associated with Problem~\ref{Tproblem}
      is defined, for
      $(\eta, v) \in C^\infty(\Tt^d\times[0,T]; \mathbb{E})
      \times C^\infty(\Tt^d\times[0,T])$, by
      \[
B \begin{bmatrix} \eta \\ v \end{bmatrix} = \begin{bmatrix} v_t+a(x)\Delta v - H(x, Dv, \eta) \\
\eta_t-\Delta (a(x)\eta)  - \div (\eta D_p H(x, Dv, \eta)) \end{bmatrix}.
\]
Given \( r,\gamma \ge 1 \), a pair \( (\tim, \tiu) \in L^r(\Tt^d\times(0,T)) \times L^\gamma ((0,T); W^{1,\gamma}(\Tt^d)) \) with \( \tim \ge 0 \) a.e.~in \( \Tt^d\times(0,T) \) is a weak solution to Problem~\ref{Tproblem} if
    \begin{align}\label{weaksol2}
    \left\langle \begin{bmatrix} \eta \\ v \end{bmatrix} - \begin{bmatrix} \tim \\ \tiu \end{bmatrix}, B \begin{bmatrix} \eta \\ v \end{bmatrix} \right\rangle&= \int_0^T\!\!\!\int_{\Tt^d}
        (\eta - \tilde{m})\big(v_t + a(x)\Delta v - H(x, Dv, \eta)\big)\,\dx\dt \nonumber\\
    &\quad
    + \int_0^T\!\!\int_{\Tt^d}
        (v - \tilde{u})\big(\eta_t - \Delta(a(x)\eta) - \div(\eta D_p H(x, Dv, \eta))\big)\,\dx\dt \geq 0
    \end{align}
    for all \( (\eta, v) \in C^\infty(\Tt^d\times[0,T]; \mathbb{E}) \times C^\infty(\Tt^d\times[0,T])\) satisfying 
    \begin{equation}\label{TestB.C.}
    \begin{cases}
       \eta(\cdot, 0) = m_0(\cdot),\\  v(\cdot, T) = u_T(\cdot).
    \end{cases}
    \end{equation}
Here, $\langle\cdot,\cdot\rangle$ denotes the duality pairing given by
    \[
    \left\langle \begin{bmatrix} \eta \\ v \end{bmatrix}, \begin{bmatrix} \tilde{\eta} \\ \tilde{v} \end{bmatrix} \right\rangle = \int_0^T\!\!\!\int_{\Tt^d} (\eta \tilde{\eta} + v \tilde{v}) \,\dx\dt.
    \]
\end{definition}

As in the stationary case, the operator $B$ is monotone with respect
to the space-time pairing above under the structural conditions on $H$
formalized in Assumption~\ref{2.13}.

The existence of weak solutions in the sense of Definition~\ref{Tweak} is established in \cite{FGUtimeMFG2026}  using monotonicity methods on Banach spaces, extending the results in \cite{ferreiraSolvingMeanFieldGames2025} concerning stationary MFGs. This approach generalizes the results in \cite{FeGoTa21}, where existence was proven under the additional normalization condition $\int_{\Tt^d} \eta \,\dx = 1$ for the test functions. Under this condition, the work in \cite{FeGoTa21} develops a comprehensive theory for a broad class of time-dependent MFGs, including models with degenerate diffusion and local or nonlocal couplings. 

\subsection{Main Results}
Uniqueness across a broad class of MFGs follows from the
Lasry–Lions monotonicity condition
\cite{ll1, ll2, lasryMeanFieldGames2007}, which requires the MFG
system to define a monotone operator and the solutions to possess
sufficient integrability. A distinct notion of monotonicity, displacement monotonicity along
optimal transport geodesics, has also been developed to establish
uniqueness for broader classes of MFGs
\cite{meszarosMeanFieldGames2024, graberMonotonicityConditionsMean2023}.
While these methods have proven effective for strong solutions in
diverse settings, for example in first-order models with Neumann
boundary conditions~\cite{gomes2023parabolic}, standard arguments
cannot be directly applied to the weak solutions considered here
(Definitions~\ref{def:weaksol} and~\ref{Tweak}) due to their limited
regularity.
To bridge this gap, we extend the weak–strong uniqueness
principle of \cite{ferreiraWeakstrongUniquenessSolutions2025} to
second-order stationary and time-dependent MFGs. By integrating a
coefficient-adapted mollification scheme into the monotonicity
argument, we allow the duality inequality for weak solutions to imply
the variational identity for strong solutions, thereby proving that
$u=\tilde u$ and $m=\tilde m$ whenever a strong solution exists.

Beyond its intrinsic theoretical interest, the weak–strong uniqueness principle has direct consequences for the convergence of numerical methods. In many settings — particularly parabolic problems — the existence of strong (or classical) solutions is well established (see the references above). On the other hand, numerical schemes based on monotone discretizations or variational regularizations naturally produce approximations whose limits, when they exist, satisfy the weak formulation in the sense of Definitions~\ref{def:weaksol} and~\ref{Tweak}. Our results close this gap: any weak limit produced by such a method must coincide with the strong solution, thereby providing a unified convergence guarantee for a broad class of numerical approaches to second-order MFGs.
Our main contributions can be summarized as follows.

\paragraph{\bf Weak–strong uniqueness for second‐order stationary MFGs.} We show that under natural integrability, monotonicity, and coercivity conditions, together with the Hamiltonian-compatibility assumptions in Assumptions~\ref{2.4} and~\ref{2.10}, the weak solution (Definition~\ref{def:weaksol}) to a second-order stationary MFG with spatially varying diffusion coincides with the corresponding strong solution.
This result generalizes the weak--strong uniqueness principle introduced for first-order stationary models in \cite{ferreiraWeakstrongUniquenessSolutions2025}. The primary technical difficulty in the second-order setting arises from the fact that standard regularization techniques do not commute with spatially dependent diffusion operators; we resolve this by introducing a coefficient-adapted double-mollification argument.
   \begin{theorem}\label{Th:uniq-stat}
    Let \( (\tilde{m}, \tilde{u}) \in L^r(\Tt^d) \times W^{1,\gamma}(\Tt^d) \) be a weak solution (cf.~Definition~\ref{def:weaksol}) and \( (m, u) \in W^{1,r_1}(\Tt^d) \times W^{1,\gamma_1}(\Tt^d) \) be a strong solution (cf.~Definition~\ref{def:strongsol-S}) to Problem~\ref{Sproblem}. Suppose that Assumptions~\ref{2.1}, \ref{2.2}, \ref{2.4}, \ref{2.9}, \ref{2.10}, \ref{2.12}, and \ref{2.13} hold.
    Then, 
    \[
    u = \tilde{u} \quad \text{and} \quad m = \tilde{m}.
    \]
\end{theorem}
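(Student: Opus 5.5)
We plan to follow the Minty-type weak--strong strategy of \cite{ferreiraWeakstrongUniquenessSolutions2025}. The idea is to insert into \eqref{weaksol} test pairs of the form $(\eta,v)=(m,u)+\epsilon(\phi,\psi)$, where $(m,u)$ is the strong solution and $(\phi,\psi)$ is smooth. Since $(m,u)$ is only Sobolev, it cannot be used directly. We therefore replace it by regularizations $(m^\delta,u^\delta)$, with $m^\delta\ge \tfrac12\min m>0$ thanks to Assumption~\ref{2.2}, and then pass to the limit $\delta\to0$ before sending $\epsilon\to0$. The target is the identity
\[
\int_{\Tt^d} (\phi - \tilde m)\,A_1(m,u)+(\psi-\tilde u)\,A_2(m,u)\,\dx \;\ge\;0 \quad\text{to first order},
\]
which should yield the variational inequality $\langle (m,u)-(\tilde m,\tilde u), A'(m,u)[\phi,\psi]\rangle\ge0$. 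Taking $(\phi,\psi)$ proportional to $(m-\tilde m,u-\tilde u)$, up to the approximation, together with the coercivity of the linearized operator at a strictly positive density, should then force $u=\tilde u$ and $m=\tilde m$.

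\textbf{Step 1: coefficient-adapted mollification.} Take a standard mollifier $\rho_\delta$ and define $u^\delta=\rho_\delta*u$ and $m^\delta = a^{-1}\,\rho_\delta*(a m)$. This choice is made so that $\Delta(a m^\delta)=\rho_\delta*\Delta(am)$ commutes exactly with the Fokker--Planck diffusion. For the HJ equation we need instead the commutator $a\Delta u^\delta-\rho_\delta*(a\Delta u)$. Writing $a\Delta u=\div(aDu)-Da\cdot Du$, this commutator is controlled by $\|Da\|_{C^1}\,\|Du\|_{L^{\gamma_1}}$ and tends to $0$ in $L^{\gamma_1}$; this is a Friedrichs-type lemma, and it uses $a\in C^2$. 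Consequently $A(m^\delta,u^\delta)$ equals the mollified strong equations, whose right-hand sides are $0$ in $\mathcal D'$, plus commutators of the nonlinear terms, namely $H(x,Du^\delta,m^\delta)-\rho_\delta*H(x,Du,m)$ and the analogous divergence term. Using the growth bounds from Assumptions~\ref{2.1}, \ref{2.4} and \ref{2.10} together with the exponents $r_1,\gamma_1$ versus $r,\gamma$ in Assumptions~\ref{2.9} and \ref{2.12}, these commutators go to $0$ in a space dual to the one containing $(\tilde m,\tilde u)$ and $(\phi,\psi)$.

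\textbf{Step 2: passage to the limit and linearization.} Plug $(m^\delta+\epsilon\phi,\,u^\delta+\epsilon\psi)$ into \eqref{weaksol}; this pair is admissible in $\mathbb E$ for small $\epsilon$ by the positive lower bound. Let $\delta\to0$ first, using Step~1 and the H\"older duality between the exponents. The zeroth-order term vanishes because $(m,u)$ solves the system. Dividing by $\epsilon>0$ and letting $\epsilon\to0$ gives
\[
\big\langle (m,u)-(\tilde m,\tilde u),\,DA(m,u)[\phi,\psi]\big\rangle\le \big\langle (\phi,\psi),\,A(m,u)\big\rangle=0
\]
up to sign. By density, the same inequality holds for $(\phi,\psi)$ in the relevant Sobolev class, and the choice $(\phi,\psi)=(m-\tilde m,u-\tilde u)$ is then allowed. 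Here the integrability assumptions are used to justify that the pairings remain finite.

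\textbf{Step 3: coercivity.} Under the Lasry--Lions structure of Assumption~\ref{2.13}, the quadratic form $\langle w,DA(m,u)w\rangle$ is bounded below by
\[
\int \|w_1\|^2+\|w_2\|^2 \;+\; m\,\big|\text{(combination of } Dw_2,\,w_1)\big|^2\,\dx.
\]
The diffusion terms cancel after integration by parts: $\int w_1 a\Delta w_2-\int w_2\Delta(aw_1)=0$. The discount terms $-u$ and $m-1$ then provide $\int w_1^2+w_2^2\le 0$. This yields $w=0$.

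\textbf{Main obstacle.} The delicate point is Step~1 combined with the density argument in Step~2. One must show that the nonlinear commutators converge in exactly the dual spaces allowed by the borderline exponents, and that $w=(m-\tilde m,u-\tilde u)$, which is only in $L^r\times W^{1,\gamma}$, can be approximated in a way compatible with the pairings against $DA(m,u)w$. If the direct approach fails, the fallback is to mollify $w$ as well, which is the double mollification, and to control the resulting commutator terms using the strong-solution regularity.
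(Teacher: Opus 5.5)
\textbf{There is a genuine gap in Steps~2--3, and it is the very difficulty the theorem addresses.}

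Write $w=(w_1,w_2)=(m-\tilde m,u-\tilde u)\in L^r\times W^{1,\gamma}$. The linearized functional $\langle w, DA(m,u)[\phi,\psi]\rangle$ contains the diffusion terms $\int_{\Tt^d} w_1\,a\Delta\psi\,\dx+\int_{\Tt^d} Dw_2\cdot D(a\phi)\,\dx$. These are continuous only for $\psi\in W^{2,r'}$ and $\phi\in W^{1,\gamma'}$. So the functional is not continuous on $L^r\times W^{1,\gamma}$, and a plain density argument cannot reach $(\phi,\psi)=w$. The cancellation $\int w_1 a\Delta w_2\,\dx+\int Dw_2\cdot D(aw_1)\,\dx=0$ that you use in Step~3 is therefore only formal: it requires $w_1\in W^{1,\gamma'}$ and $w_2\in W^{2,r'}$, which the weak solution does not have.

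The missing idea is to keep the linear identity $\Ii'(0)=0$ only for $C^2$ pairs. Your one-sided inequality gives this equality by linearity, replacing $(\phi,\psi)$ by $-(\phi,\psi)$. Then test it with $\eta_\delta=a^{-1}\,\theta_\delta\ast\theta_\delta\ast(aw_1)$ and $v_\delta=\theta_\delta\ast\theta_\delta\ast w_2$. By \eqref{eq:propmoll}, the two diffusion terms become $\mp\int D(\theta_\delta\ast(aw_1))\cdot D(\theta_\delta\ast w_2)\,\dx$ and cancel exactly for every $\delta$. Every remaining term is continuous on $L^r\times W^{1,\gamma}$ by Assumptions~\ref{2.9} and~\ref{2.12}, so letting $\delta\to0$ is legitimate.

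You instead apply the coefficient-adapted mollification to the strong solution, where it is unnecessary. Assumption~\ref{2.4} together with $u\in W^{2,r'}$ from Remark~\ref{2.5} already supplies admissible approximants. Moreover, the stated hypotheses do not control the nonlinear commutators of your particular mollifier. Mollifying $w$ appears in your proposal only as a fallback, but it is the core of the proof. Any commutator arising there involves $w$ and $a$, not the regularity of the strong solution.

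\textbf{Step~3 also misidentifies the source of coercivity.} In $\langle w, DA(m,u)w\rangle$ the discount terms contribute $\int(-w_1w_2+w_2w_1)\,\dx=0$. They are skew, as are the first-order terms $-w_1D_pH\cdot Dw_2$ and $Dw_2\cdot w_1 D_pH$, so they give no bound on $\int (w_1^2+w_2^2)\,\dx$. What survives is
\[
\int\big[-D_mH\,w_1^2+m\,(Dw_2)^{T}D^2_{pp}H\,Dw_2+mD^2_{pm}H\cdot Dw_2\,w_1\big]\,\dx=0.
\]
Assumption~\ref{2.13} then yields only $m=\tilde m$ and $Du=D\tilde u$. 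To conclude $u=\tilde u$ you need one more step: substitute these back into the linear identity with arbitrary $\bar\eta\in C^2(\Tt^d)$. This gives $\int(u-\tilde u)\bar\eta\,\dx=0$ for all such $\bar\eta$.
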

 \paragraph {\bf Weak–strong uniqueness for second-order time‐dependent MFGs.} Using a
 similar argument, we obtain a weak-strong uniqueness result for second-order evolution problems. To handle the time-dependent boundary conditions and the parabolic structure, the proof utilizes one-sided forward- and backward-in-time mollifiers.
    \begin{theorem}\label{Th:uniq-time}
    Let \( (\tilde{m}, \tilde{u}) \in L^r(\Tt^d\times (0,T))\times L^\gamma\left( (0,T); W^{1,\gamma}(\Tt^d)\right) \) be a weak solution (cf.~Definition~\ref{Tweak}) and \( (m, u) \in {W^{1,r_1}}(\Tt^d\times (0,T)) \times W^{1,\gamma_1}(\Tt^d\times (0,T)) \) be a strong solution (cf.~Definition~\ref{def:strongsol-T}) to Problem~\ref{Tproblem}. Suppose that Assumptions~\ref{2.1}, \ref{2.2}, \ref{2.4}, \ref{2.9}, \ref{2.10}, \ref{2.12}, and \ref{2.13} hold. 
    Then, 
    \[
    u = \tilde{u} \quad \text{and} \quad m = \tilde{m}.
    \]
\end{theorem}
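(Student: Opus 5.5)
The plan is to carry over the Minty-type argument used for the stationary case to space-time. The idea is to test \eqref{weaksol2} with perturbations of the strong solution, $(\eta,v)=(m+\epsilon\mu,\,u+\epsilon w)$, and let $\epsilon\to0$. The strong solution is only Sobolev-regular, so the test pairs must first be regularized while keeping smoothness, the sign constraint $\eta\ge0$ (indeed $\eta>0$ via Assumption~\ref{2.2}), and the trace constraints \eqref{TestB.C.}.

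\textbf{Step 1: admissible approximants.} I mollify in space with the coefficient-adapted scheme used for Theorem~\ref{Th:uniq-stat}. For the density I mollify $a m$ and divide by $a$, i.e. $m^\delta = a^{-1}\rho_\delta*(am)$, so that $\Delta(am^\delta)$ commutes with convolution. For $u$ I mollify directly. The commutator $a\Delta(\rho_\delta*u)-\rho_\delta*(a\Delta u)$ is then controlled in $L^{\gamma_1}$-type norms by a Friedrichs lemma, since $a\in C^2$. In time I use one-sided mollifiers, which keep the boundary data fixed.
\begin{itemize}
\item For $m$, I use a forward-in-time kernel, supported in $s\le t$ after extending $m$ by $m_0$ (smooth) for $t<0$. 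Combined with a cutoff correction of the form $\eta=m^\delta+\theta_\delta(t)(m_0-m^\delta(\cdot,0))$, this enforces $\eta(\cdot,0)=m_0$.
\item For $u$, I use a backward kernel, with the analogous correction enforcing $v(\cdot,T)=u_T$.
\end{itemize}
The trace sense of \eqref{TC} in $W^{1,r_1}$ and $W^{1,\gamma_1}$ ensures that the correction terms vanish in the relevant norms as $\delta\to0$. Positivity is preserved because $m\ge\min m>0$ and $m_0>0$.

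\textbf{Step 2: passing $\delta\to0$ in \eqref{weaksol2}.} I plug $(\eta^\delta+\epsilon\mu,\,v^\delta+\epsilon w)$, with smooth $\mu,w$ vanishing at $t=0$ and $t=T$ respectively, into \eqref{weaksol2}. I then use the growth bounds (Assumptions~\ref{2.1}, \ref{2.9}, \ref{2.12}) and the exponent compatibility between $(r,\gamma)$ and $(r_1,\gamma_1)$ so that the following convergences hold: $H(x,Dv^\delta,\eta^\delta)\to H(x,Du,m)$ in $L^{r'}$, and $\eta^\delta D_pH\to mD_pH$ in the dual of $L^\gamma(W^{1,\gamma})$. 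The linear terms $v_t$, $a\Delta v$, $\eta_t$, $\Delta(a\eta)$ are handled by integrating by parts once against $\tilde m$ and $\tilde u$. This is where $\tilde u\in L^\gamma(W^{1,\gamma})$ is used, together with $\Delta(a\eta)=\div(D(a\eta))$. The $t$-derivatives land on the smooth side, and the boundary terms vanish by the constraints. The conclusion is that $\langle w_\epsilon-\tilde w, B w_\epsilon\rangle\ge0$ holds with $w_\epsilon=(m+\epsilon\mu,u+\epsilon w)$, interpreted in the appropriate weak sense.

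\textbf{Step 3: the limit $\epsilon\to0$ and identification.} Since $Bw_0=0$ in $\mathcal D'$, dividing by $\epsilon$ and letting $\epsilon\to0$ gives
\[
\langle (m,u)-(\tilde m,\tilde u), DB(m,u)[\mu,w]\rangle + \langle(\mu,w),B(m,u)\rangle\ge0 ,
\]
and the last term is zero. Replacing $(\mu,w)$ by $-(\mu,w)$ turns this into an equality. The remaining identity is a linear adjoint equation: $(\tilde m-m,\tilde u-u)$ is annihilated by the linearized operator $DB(m,u)$ for all test directions. Choosing the directions $(\mu,w)=(\tilde m-m,\tilde u-u)$ (after a further regularization) and using monotonicity with strict coercivity from Assumptions~\ref{2.13} and \ref{2.2} gives the following. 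The quadratic form controls $\int m\,|D(\tilde u-u)|^2$ together with a strictly monotone term in $\tilde m-m$ from the coupling, so both vanish. Hence $\tilde m=m$, and then $D\tilde u=Du$. The time-derivative terms are antisymmetric and integrate to boundary terms, which vanish because of the homogeneous trace conditions on the directions. Finally, $\tilde u=u$ follows from $D(\tilde u-u)=0$ together with the first equation and the terminal condition.

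\textbf{Main obstacle.} I expect the hardest part to be Step~2's convergence near the time boundaries. The one-sided mollification and the trace correction must be compatible with the commutator estimates for $a(x)$. They must also hold with only the limited exponents $r,\gamma$ available for the weak solution, since these exponents are exactly calibrated by Assumption~\ref{2.12}. To handle this, I would first try to bound the correction layers by $\|m^\delta(\cdot,0)-m_0\|_{W^{1,r_1}}$ times $\|\theta_\delta'\|_{L^1}$-type quantities, and check that the cross terms against $\tilde u$ are of order $o(1)$.
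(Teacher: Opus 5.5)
There is a genuine gap, and it sits exactly where the real work of the proof is.

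Your Steps~1--2 approximate perturbations $(m+\varepsilon\bar\eta,u+\varepsilon\bar v)$ of the strong solution by smooth admissible pairs and derive the linearized identity. These steps are essentially supplied by Assumptions~\ref{2.4} and~\ref{2.10}. In particular, the convergence of $H(\cdot,Dv_k,\eta_k)$ and $\eta_kD_pH$ is part of Assumption~\ref{2.4}, not a consequence of \ref{2.1}, \ref{2.9}, \ref{2.12}. The coefficient-adapted and one-sided mollifications you introduce there are not what is needed at that stage.

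The linearized identity holds only for $C^2$ directions with $\bar\eta(\cdot,0)=0$ and $\bar v(\cdot,T)=0$. The direction you want, $(m-\tilde m,u-\tilde u)$, lies only in $L^r\times L^\gamma(W^{1,\gamma})$: it has no time derivative and no traces, and $m-\tilde m$ has no spatial derivative. Hence your claim that ``the time-derivative terms are antisymmetric and integrate to boundary terms, which vanish because of the homogeneous trace conditions on the directions'' is only formal. The phrase ``after a further regularization'' hides the whole argument.

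A generic regularization fails here. A symmetric time mollification of $m-\tilde m$ does not vanish at $t=0$, so it is not an admissible direction. Cutting it off near $t=0$ creates a layer term of size $\rho^{-1}\int_0^{2\rho}\!\int_{\Tt^d}(u-\tilde u)(\cdots)$ that has no reason to vanish, since the weak solution sees the initial-terminal data only through its test functions. You also never treat the diffusion cross-term $\int (m-\tilde m)a\Delta\bar v+D(u-\tilde u)\cdot D(a\bar\eta)$ at this stage.

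The missing idea is to regularize the \emph{direction}, not the strong solution, so that both cross terms vanish identically before any limit. Extend $\bar m=m-\tilde m$ and $\bar u=u-\tilde u$ by zero outside $[0,T]$ and take
\[
\bar\eta=\psi_\rho*_t\psi_\rho*_t\bigl(a^{-1}\kappa_\delta*_x(a\bar m)\bigr),\qquad \bar v=\phi_\rho*_t\phi_\rho*_t(\kappa_\delta*_x\bar u),
\]
with $\kappa_\delta=\theta_\delta*\theta_\delta$, $\psi_\rho$ supported in $(0,\rho)$, and $\phi_\rho(t)=\psi_\rho(-t)$. This choice has three properties:
\begin{enumerate}
\item $\bar\eta=0$ for $t\le0$ and $\bar v=0$ for $t\ge T$, so the directions are admissible.
\item The symmetry of $\theta_\delta$ makes the diffusion cross-terms cancel exactly.
\item Since $\hat\phi_\rho=\psi_\rho$, the time terms reduce to $\int_{\mathbb R}\!\int_{\Tt^d}\partial_t\bigl[(\psi_\rho*_t\bar m)(\phi_\rho*_t\bar u)\bigr]=0$, because the product is compactly supported in $(0,T)$.
\end{enumerate}
Only then let $\delta\to0$ and $\rho\to0$ to reach the quadratic form of Assumption~\ref{2.13}.

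Finally, $u=\tilde u$ comes from $\int(u-\tilde u)\bar\eta_t=0$ for all admissible $\bar\eta$ with $\bar\eta(\cdot,0)=0$. It does not come from ``the first equation and the terminal condition'', which $\tilde u$ does not satisfy in any pointwise or trace sense. For the spatial part alone, a Friedrichs commutator estimate would also work, since $\frac1r+\frac1\gamma<1$. However, it must be applied to this direction, not to the approximation of $(m,u)$.
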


\paragraph{\bf Improved regularity for weak solutions of the quadratic 
Hamiltonian stationary mean-field games.} The monotonicity argument used 
in~\cite{ferreiraWeakstrongUniquenessSolutions2025} and adapted here for 
second-order MFGs cannot be directly extended to MFGs with quadratic growth 
due to the lack of sufficient integrability for the density $\tilde{m}$ of weak solutions. 
In 
Section~\ref{sec:quad-reg}, we establish 

improved regularity results for weak solutions in a specific subclass of
Problem~\ref{Sproblem} featuring quadratic growth and logarithmic coupling
(Theorem~\ref{Th:IR}).
These regularity results restore the integrability required by the linearization argument, thereby enabling the proof of weak-strong uniqueness for these MFGs
(Theorem~\ref{Th:uniq-Q}).
In particular, combining Theorem~\ref{Th:IR} with Theorem~\ref{Th:uniq-Q} yields the logarithmic quadratic weak--strong uniqueness result in the range where $\tilde m\in L^2$, e.g.\ for $d\le 4$.
The remainder of the paper is organized as follows. Section~\ref{secAs} 
introduces the notation and standing assumptions used throughout the work. 
Section~\ref{sect:proofuniqstat} establishes the weak-strong uniqueness 
result for the stationary second-order MFG with scalar diffusion via a 
linearization technique and coefficient-adapted mollifiers. 
Section~\ref{sect:proofuniqTime} extends this argument to the 
time-dependent problem using forward and backward one-sided time mollifiers. 
Section~\ref{sect:quad-uniq} then addresses the stationary second-order 
MFGs with quadratic Hamiltonians. For these, the standard monotonicity 
argument cannot be directly applied. Accordingly, we first establish improved regularity for weak solutions 
of Problem~\ref{Sproblem} in the special case of logarithmic coupling, and then 
extend the analysis to prove weak–strong uniqueness for quadratic
Hamiltonians with monotone couplings, provided the weak solution
possesses the improved integrability established for logarithmic and
power-type nonlinearities.
Finally, in 
Section~\ref{sect:quad-conv}, we prove the strong convergence of monotone 
regularizations, completing the analysis of the quadratic regime.

\section{Preliminaries}\label{secAs}

\subsection{Notation}
 
We use the standard notation for Sobolev and Lebesgue spaces; namely, $W^{k,r}(\Tt^d)$ and $L^r(\Tt^d)$, respectively, where $r\in[1,+\infty]$. Given any such $r$, its conjugate exponent is denoted by $r'$, and is given by $\tfrac{1}{r} + \tfrac{1}{r'} = 1$. If $r\in [1,d)$, we denote by $r^*$  its Sobolev exponent, which is given by $r^*=\frac{dr}{d-r}$. Constants denoted by the same letter $C$ may change from line to line but depend only on fixed structural parameters. 
In particular, for $d>2$, we denote by $2^*$ the Sobolev exponent
\[
2^*=\frac{2d}{d-2}.
\]
When $d\le 2$, we use instead the Sobolev embeddings
\[
W^{1,2}(\Tt^d)\hookrightarrow L^q(\Tt^d) \qquad\text{for every finite } q\ge 1.
\]
We normalize the torus measure so that $\int_{\Tt^d}\dx=1$.
\subsection{Main Assumptions}

To facilitate a unified presentation, we introduce the following set:
\[
\Omega=\begin{cases}
\Tt^d                           & \text{in the stationary setting (Problem~\ref{Sproblem})},\\[0.5ex]
\Tt^d \times (0,T)              & \text{in the evolutionary setting (Problem~\ref{Tproblem})}.
\end{cases}
\]

As specified in Definitions~\ref{def:weaksol} and~\ref{Tweak}, weak solutions $(\tilde{m}, \tilde{u})$ belong to the spaces $L^r(\Tt^d) \times W^{1,\gamma}(\Tt^d)$  and $L^r(\Tt^d\times(0,T)) \times L^\gamma ((0,T); W^{1,\gamma}(\Tt^d))$, respectively, with $r,\gamma \geq 1$, consistent with the established existence theory
for stationary systems on the torus \cite{FG2}, models with Dirichlet boundary conditions \cite{FGT1}, and time-dependent problems through space-time regularization \cite{FeGoTa21}.

The assumptions below impose no additional regularity on the weak solution; rather, they require the strong solution to possess sufficient regularity to support the linearization argument. We categorize these assumptions into three groups: \textit{regularity requirements} (Assumptions \ref{2.1}--\ref{2.4}), \textit{integrability/growth conditions} (Assumptions \ref{2.9}--\ref{2.12}), and \textit{linearized coercivity} (Assumption \ref{2.13}).
The first assumption ensures that the strong solution space embeds into the weak solution space.

\begin{hyp}\label{2.1}
In the definition of weak and strong solutions, the exponent pairs \( (r, \gamma) \) and \( (r_1, \gamma_1) \)  satisfy \( r_1 \ge r > 1 \) and \( \gamma_1 \ge \gamma > 1 \). 
\end{hyp}
Next, we assume a uniform lower bound on the strong density, $m$. This bound allows us to consider convenient admissible perturbations of a strong solution and ensures coercivity at a key step  of our uniqueness results' proof. Without such a bound, 
uniqueness may fail; see the example in \cite{Gomes2016b} where $u$ is not unique in the regions where $m$ vanishes.

\begin{hyp}\label{2.2}
Suppose that $(m,u)\in W^{1,r_1}(\Omega)\times W^{1,\gamma_1}(\Omega)$ is a strong solution to Problem~\ref{Sproblem} (or Problem~\ref{Tproblem}). Then, there exists a constant $c_0>0$ such that
\begin{align*}
    \operatorname*{ess\,inf}_{y\in\Omega} m(y)\ge c_0.
\end{align*}
\end{hyp}

We further require sufficient regularity to allow testing with Sobolev-level perturbations in a neighborhood of the strong solution. This enables us to expand the class of admissible test functions for expressions involving differences between strong and weak solutions.

\begin{hyp}\label{2.4}
    Consider a strong solution \( (m, u)\in {W^{1,r_1}}(\Omega)\times
    W^{1,\gamma_1}(\Omega) \) to Problem~\ref{Sproblem} (or
    Problem~\ref{Tproblem}). There exists \( \varepsilon_0 > 0 \)
    such that for every
    \( (\eta, v) \in {W^{1,r_1}}(\Omega)\times
    W^{1,\gamma_1}(\Omega)\) satisfying
    \begin{equation}\label{eq:neigh}
    \| (m, u) - (\eta, v) \|_{{W^{1,r_1}}(\Omega)\times
    W^{1,\gamma_1}(\Omega) } \leq \varepsilon_0,
    \end{equation}
    \( \essinf_\Omega \eta > 0 \), and, in the time-dependent
    setting, \eqref{TestB.C.} in the sense of traces, we have
    \( H(\cdot, Dv, \eta) \in L^{r'}(\Omega) \) and
    \( \eta D_p H(\cdot, Dv, \eta) \in L^{\gamma'}(\Omega) \).
    Furthermore, there exists a sequence
    \( (\eta_k, v_k)_{k \in \mathbb{N}} \subset
    C^\infty(\overline\Omega; \Rr^+) \times
    C^\infty(\overline\Omega) \) such that:
    \begin{enumerate}
        \item \( \eta_k \to \eta \) in
              \( {W^{1,r_1}}(\Omega) \);
        \item \( v_k \to v \) in
              \( W^{1,\gamma_1}(\Omega) \), and in the following
              stronger topology if $v$ belongs to the
              corresponding space:
              \begin{itemize}
                  \item in $W^{2,r'}(\Tt^d)$ in the stationary
                        setting;
                  \item in $W^{1,r'}(0,T; L^{r'}(\Tt^d)) \cap
                        L^{r'}(0,T; W^{2,r'}(\Tt^d))$ in the
                        time-dependent setting;
              \end{itemize}
        \item \( H(\cdot, Dv_k, \eta_k) \to H(\cdot, Dv, \eta) \)
              in \( L^{r'}(\Omega) \);
        \item \( \eta_k D_p H(\cdot, Dv_k, \eta_k) \to
              \eta D_p H(\cdot, Dv, \eta) \) in
              \( L^{\gamma'}(\Omega) \);
        \item in the time-dependent setting,
              $\eta_k(\cdot,0) = m_0$ and $v_k(\cdot,T) = u_T$.
    \end{enumerate}
\end{hyp}

 \begin{remark}\label{2.5}
      Under the previous assumption, we have that \( H(\cdot, Du, m) \in L^{r'}(\Omega) \) and  \( m |D_p H(\cdot, Du, m)| \in L^{\gamma'}(\Omega) \). Accordingly, the  strong solution \((m, u)\in {W^{1,r_1}}(\Omega)\times W^{1,\gamma_1}(\Omega)  \) enjoys the following improved regularity: 
      \begin{itemize}
          \item In the stationary setting, the first equation in \eqref{SMFG2} is a uniformly elliptic PDE. By elliptic regularity theory (see \cite[Sec.~6.3]{evansPartialDifferentialEquations2010}), we have \[u \in W^{2,r'}(\Tt^d).\]
          
          \item For the time-dependent counterpart, the first equation in \eqref{TMFG} is a parabolic equation. By the $L^{r'}$-maximal regularity of the heat operator (see \cite[Theorem~5.4]{robinson2016N} for $d=3$; also see \cite[Sec.~7.1]{evansPartialDifferentialEquations2010}), we obtain
          \[
          u \in W^{1,r'}(0,T; L^{r'}(\Tt^d)) \cap L^{r'}(0,T; W^{2,r'}(\Tt^d)).
          \]
      \end{itemize}

\end{remark}

  In \cite{ferreiraWeakstrongUniquenessSolutions2025}, the authors establish the validity of Assumption~\ref{2.4} in the stationary setting for two typical classes of Hamiltonians (see Example~\ref{eg:typicalH}) using standard spatial mollification. However, extending this to the evolutionary framework presents a challenge, as standard space-time mollification may violate the boundary conditions specified in the last condition of Assumption~\ref{2.4}. Therefore,  inspired by the global approximation techniques in \cite[Chapter 5]{evansPartialDifferentialEquations2010}, we iteratively construct approximations specifically designed to ensure compatibility with the initial and terminal data.  The explicit construction of the test functions in Assumption~\ref{2.4} relies on a sequence of extensions, time-shifts, and mollifications; we present this construction in Section~\ref{FBM}.

In the following assumption, we impose convergence of the first and second derivatives of $H$ along smooth perturbations of a strong solution, used in our linearization argument within the weak-strong uniqueness proof.
\begin{hyp}\label{2.10}
    Let $(m,u)\in{W^{1,r_1}}(\Omega)\times W^{1,\gamma_1}(\Omega) $ be a strong solution to 
Problem~\ref{Sproblem} (or Problem~\ref{Tproblem})
and let $c_{0}$ and $\varepsilon_{0}$ be the positive constants specified by Assumptions~\ref{2.2} and~\ref{2.4}, respectively.
 For all \( (\bar{\eta}, \bar{v}) \in C^2(\overline \Omega) \times C^2(\overline \Omega) \), satisfying \(\bar\eta(\cdot,0)=0\) and \(\bar v(\cdot,T)=0\)  if $\Omega=\Tt^d\times (0,T)$, there exists \( 0 < \bar{\varepsilon} \leq  \varepsilon_0 \) such that, for \[ \bar{m}_\varepsilon = m + \varepsilon \bar{\eta} \quad \text{ and } \quad \bar{u}_\varepsilon = u + \varepsilon \bar{v} \] with \( |\varepsilon| \leq  \bar{\varepsilon} \), the following convergences hold as \( \varepsilon \to 0 \):
    \begin{enumerate}
        \item \( D_p H(\cdot, D\bar{u}_\varepsilon, \bar{m}_\varepsilon) \to D_p H(\cdot, Du, m) \) in \( L^{r'}(\Omega) \) and \( L^{\gamma'}(\Omega) \);
        \item \( D_m H(\cdot, D\bar{u}_\varepsilon, \bar{m}_\varepsilon) \to D_m H(\cdot, Du, m) \) in \( L^{r'}(\Omega) \);
        \item \( \bar{m}_\varepsilon D^2_{pp} H(\cdot, D\bar{u}_\varepsilon, \bar{m}_\varepsilon) \to m D^2_{pp} H(\cdot, Du, m) \) in \( L^{\gamma'}(\Omega) \);
        \item \( \bar{m}_\varepsilon D^2_{pm} H(\cdot, D\bar{u}_\varepsilon, \bar{m}_\varepsilon) \to m D^2_{pm} H(\cdot, Du, m) \) in \( L^{\gamma'}(\Omega) \).
    \end{enumerate}
\end{hyp}  

\begin{example}\label{eg:typicalH} In the examples below, the exponent $\gamma>1$ is the same exponent as the one appearing in the Sobolev space $W^{1,\gamma}$ used throughout the paper; we keep the same notation to emphasize this correspondence. Both Assumptions~\ref{2.4} and~\ref{2.10} are essentially intrinsic assumptions on the Hamiltonian.
As shown in \cite{ferreiraWeakstrongUniquenessSolutions2025}, together with the discussion in Section~\ref{FBM}, these conditions are satisfied by several standard classes of MFGs. 
Admissible Hamiltonians  include:
    \begin{itemize}
        \item The power-type Hamiltonian, with monotone coupling in \(m\), 
\begin{equation}\label{H:power-coupling}
H(x,p,m)\;=\;\frac{(1+|p|^{2})^{\gamma/2}}{\gamma}\;-\;g(m),
\qquad \gamma>1.
\end{equation}
 A typical choice
is $g(m) = m^\beta$ with $\beta > 0$, in which case the
integrability exponent for the density is $r = \beta + 1$ and
the assumptions are satisfied for a wide range of $\gamma$ and
$\beta$.
\item The congestion-type Hamiltonian
\begin{equation}\label{H:congestion}
H(x,p,m)\;=\;\frac{(1+|p|^{2})^{\gamma/2}}{\gamma\, m^{\alpha}},
\qquad \gamma>1,\ \alpha>0,
\end{equation}
   for a wide range of parameters $\gamma$ and  $\alpha$.
  \end{itemize}
\end{example}

Next, we impose compatibility conditions on the exponents to guarantee the integrability of the various integral densities appearing in our analysis.

\begin{hyp}\label{2.9}
 If the exponent $\gamma$ in the weak solutions satisfies \(\gamma < d\), we assume the inequality
\(
r' \leq  \gamma^*
\).
\end{hyp}

\begin{hyp}\label{2.12}
    Let the exponent pairs \((r,\gamma)\) and \((r_{1},\gamma_{1})\) satisfy the conditions of  Assumption \ref{2.1}. Assume further that
\begin{equation}\label{superQ}
     \frac{2}{r} + \frac{1}{\gamma} < 1 \quad \text{and} \quad \frac{1}{r} + \frac{2}{\gamma} < 1.
\end{equation}
    Define the exponents  \( q_1, q_2, q_3, \) and \( q_4 \) by
    \[
    \frac{1}{r} + \frac{1}{\gamma} + \frac{1}{q_1} = 1, \quad \frac{2}{r} + \frac{1}{q_2} = 1, \quad \frac{2}{r} + \frac{1}{\gamma} + \frac{1}{q_3} = 1, \quad \text{and} \quad \frac{1}{r} + \frac{2}{\gamma} + \frac{1}{q_4} = 1.
    \]
    For a strong solution \( (m, u) \in {W^{1,r_1}}(\Omega)\times W^{1,\gamma_1}(\Omega)  \), the following integrability properties hold:
    \begin{enumerate}
        \item \( D_p H(\cdot, Du, m) \in L^{q_1}(\Omega) \) and \( D_m H(\cdot, Du, m) \in L^{q_2}(\Omega) \);
        \item \( D^2_{pp} H(\cdot, Du, m) \in L^{q_4}(\Omega) \) and \( D^2_{pm} H(\cdot, Du, m) \in L^{q_3}(\Omega) \).
    \end{enumerate}
\end{hyp}
\begin{remark}
Inequality \eqref{superQ} implies that the second-order mean-field games in Problems~\ref{Sproblem} and~\ref{Tproblem}, when treated under the general framework of this section, are restricted to Hamiltonians with super-quadratic growth. The critical quadratic growth case ($r=1, \gamma=2$) requires specific handling and is treated separately in Section~\ref{sect:quad-uniq} and~\ref{sect:quad-conv}. This technical limitation is a standard feature of our technique,
 as seen in the first-order counterparts addressed in \cite{ferreiraWeakstrongUniquenessSolutions2025}.
\end{remark}
The following assumption ensures that the quadratic form generated by the linearization argument aforementioned is strictly positive, which is essential to deduce uniqueness.

\begin{hyp}\label{2.13}
    Suppose that $(m,u)\in{W^{1,r_1}}(\Omega)\times W^{1,\gamma_1}(\Omega) $ is a strong solution to Problem~\ref{Sproblem} (or Problem~\ref{Tproblem}). There exists a strictly positive function \( \omega : \Omega \rightarrow (0, \infty) \) such that
    \[
    \begin{bmatrix}
        m D^2_{pp} H(\cdot, Du, m) & \frac{1}{2} m D^2_{pm} H(\cdot, Du, m) \\
        \frac{1}{2} m D^2_{pm} H(\cdot, Du, m) & -D_m H(\cdot, Du, m)
    \end{bmatrix}
    \ge \omega I \quad \text{a.e. in } \Omega .
    \]
\end{hyp}

\subsection{Approximation}
\label{mol}

To overcome the lack of regularity to handle the diffusion and time-dependent boundary terms in our analysis, we employ a mollification argument so that all pairings are admissible and the problematic contributions vanish prior to a limit passage, yielding uniqueness in the limit. We specify next the mollifiers that we consider and their main properties.

\subsubsection{Stationary Mollifiers}
\label{R8}
To use the standard properties of mollifiers in our setting, we identify any function, $w$, in the $d$-dimensional torus $\Tt^d$ with a $Q$-periodic function, $w_\#$, on the Euclidean space $\Rr^d$, where $Q=(-\tfrac12,\tfrac12)^d$, and vice versa. In particular, $w\in C(\Tt^d)$ (respectively, $w\in L^r(\Tt^d)$) if, and only if, $w_\#\in C(\Rr^d)$ (respectively, $w_\# \in L^r_{\rm loc}(\Rr^d)$) and $w_\#$  is $Q$-periodic.

    Let $\theta\in C_c^\infty(\Rr^d)$ be a nonnegative smooth  kernel, supported in \(B(0, 1)\), with $\int_{\Rr^d}\theta\,\dx=1$ and $\theta(x)=\theta(-x)$ for all $x\in\Rr^d$. For  $x\in\Rr^d$ and ${ \delta}\in (0,\tfrac12)$, we consider the standard mollifier
$\theta_{ \delta}(x)=\tfrac{1}{{ \delta}^{d}}\theta(\tfrac{x}{{ \delta}})$.
Given a $Q$-periodic function $w_\# \in L^r_{\rm loc}(\Rr^d)$,  we define
\begin{align*}
    ( \theta_{ \delta} \ast w_\# )(x) = \int_{\Rr^d} \theta_{ \delta}(x-y) w_\#(y)\, dy,\quad x\in\Rr^d.
\end{align*}
A simple change of coordinates shows that $\theta_{ \delta} \ast w_\#$ is $Q$-periodic. Moreover, $(\theta_{ \delta} \ast w_\#) \in C^\infty(\Rr^d)$. Furthermore, 
$D(\theta_{ \delta} \ast w_\#) = \theta_{ \delta} \ast Dw_\#$ if $w_\# \in W^{1,r}_{\rm loc}(\Rr^d)$, and
\begin{enumerate}
        \item $(\theta_{ \delta} \ast w_\#) \to w_\#$ in $L^r(Q)$, \quad (respectively, in $W^{1,r}(Q)$ if $w_\# \in W^{1,r}_{\rm loc}(\Rr^d)$),
        
        \item $\theta_{ \delta} \ast(\theta_{ \delta} \ast w_\#) \to w_\#$ in  $L^r(Q)$ \quad (respectively, in $W^{1,r}(Q)$ if $w_\# \in W^{1,r}_{\rm loc}(\Rr^d)$).
\end{enumerate}        
Finally, the  symmetry of the mollifiers yields
\begin{equation}\label{eq:propmoll}
    \int_{Q}  \bar w_\#(x) (\theta_{ \delta} \ast  w_\#)(x) \,\dx = \int_{Q}  (\theta_{ \delta} \ast \bar  w_\#)(x)  w_\#(x) \,\dx
\end{equation}
for any $Q$-periodic functions  $w_\#$, $\bar  w_\# \in L^1_{\rm  loc}(\Rr^d)$.

Let \( a : \Tt^d \to \mathbb{R}^+ \) be as in the statement of Problem~\ref{Sproblem}, and consider the differences     \((u - \tiu)\in W^{1,{\gamma}}(\Tt^d)\) and \((m - \tim)\in L^r(\Tt^d)\) between strong and weak solutions. Denoting by $a_\# \in C^2(\Rr^d)$, \((u_\# - \tiu_\#)\in W^{1,\gamma}_{\rm loc}(\Rr^d)\) and \((m_\# - \tim_\#)\in L^r_{\rm loc}(\Rr^d)\) the corresponding $Q$-periodic functions in $\Rr^d$, we can define the double mollifications
    \[
    {\eta}_{ \delta}(x) = \frac{1}{a_\#(x)}\big(\theta_{ \delta} \ast \theta_{ \delta} \ast\big(a_\#(\cdot) (m_\# - \tim_\#)\big)\big)(x), \quad 
    {v}_{ \delta}(x) = \big(\theta_{ \delta} \ast \theta_{ \delta} \ast (u_\# - \tiu_\#)\big)(x)
    \]  
for $x\in\Rr^d$. 
Using the discussion above, we have  $ \eta_{ \delta} \in C^2(\Rr^d)$ and   $v_{ \delta} \in C^\infty(\Rr^d)$, both $Q$-periodic functions,   and
\begin{align*}
    {\eta}_{ \delta} \to m_\# - \tim_\# \quad \text{in } L^r(Q), \quad {v}_{ \delta} \to u_\# - \tiu_\# \quad \text{in } W^{1,\gamma}(Q).
\end{align*}
The $C^2$ regularity of $\eta_{ \delta}$ suffices, since the linearized identity \eqref{E6onQ} is tested only against $C^2$ pairs.

\subsubsection{Evolutionary Mollifiers}
\label{emol}
  To handle the boundary terms in the time-dependent case, we start by considering the  zero extensions of the difference between a strong and a weak solution as follows:
\begin{equation}
\begin{aligned}\label{eq:defext0}
\bar m(x,t) &=
\begin{cases}
m(x,t)-\tilde m(x,t), & 0 \leq  t \leq  T,\\
0,                       &\text{otherwise},
\end{cases}\\
\bar u(x,t) &=
\begin{cases}
u(x,t)-\tilde u(x,t), & 0 \leq t \leq T,\\
0,                       & \text{otherwise}.
\end{cases}
\end{aligned}
\end{equation}

Let \(\zeta\in C_c^\infty(-1,1)\) be a  nonnegative and symmetric smooth kernel, with \(\int_{\Rr}\zeta\,\dt=1\). Given $\rho>0$, we define the standard mollifier, $\zeta_\rho\in C_c^\infty(-\rho,\rho)$, by  
\(\zeta_\rho(\cdot)=\tfrac{1}{\rho}\zeta(\tfrac{\cdot}{\rho})\). We further consider the following one-sided time shifted mollifiers (see Figure~\ref{fig:one-sided-mollifiers}): 

\begin{itemize}
    \item one-sided, forward-shifted mollifier, $\psi_{\rho}\in C_c^\infty(0,\rho)$, defined by
\(
\psi_{\rho}(\cdot)=\zeta_{\tfrac{\rho}{2}}\!\left(\cdot-\tfrac{\rho}{2}\right);\)
\item one-sided, backward-shifted mollifier, $\phi_{\rho}\in C_c^\infty(-\rho,0)$, defined by 
\(
\phi_{\rho}(\cdot)=\zeta_{\tfrac{\rho}{2}}\!\left(\cdot+\tfrac{\rho}{2}\right).
\)
\end{itemize}
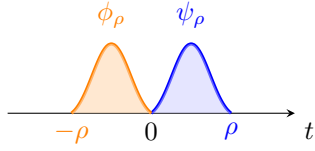
\begin{figure}[htbp]
  \centering
\begin{tikzpicture}[>=stealth,scale=1.05]
  \draw[->] (-1.8,0) -- (1.8,0) node[below right] {$t$};
  \node at (0,0) [below] {$0$};


  \draw[domain=0:1,smooth,variable=\x,blue,very thick]
    plot({\x},{3.5*(\x)*(1-\x)*exp(-6*(\x-0.5)^2)});
  \fill[blue!20,opacity=0.5]
    (0,0) -- plot[domain=0:1,smooth,variable=\x] ({\x},{3.5*(\x)*(1-\x)*exp(-6*(\x-0.5)^2)}) -- (1,0) -- cycle;
  \node[blue] at (1,0) [below] {$\rho$};
  \node[blue] at (0.5,1.25) {$\psi_\rho$};

  \draw[domain=-1:0,smooth,variable=\x,orange,very thick]
    plot({\x},{3.5*(1+\x)*(-\x)*exp(-6*(\x+0.5)^2)});
  \fill[orange!30,opacity=0.6]
    (-1,0) -- plot[domain=-1:0,smooth,variable=\x] ({\x},{3.5*(1+\x)*(-\x)*exp(-6*(\x+0.5)^2)}
) -- (0,0) -- cycle;
  \node[orange] at (-1,0) [below] {$-\rho$};
  \node[orange] at (-0.5,1.25) {$\phi_\rho$};
\end{tikzpicture}
  \caption{Typical profile of the one-sided time shifted mollifiers $\phi_\rho$ and $\psi_\rho$.}
  \label{fig:one-sided-mollifiers}
\end{figure}
Finally, we  define the time-reversal mollifiers, $\hat{\psi}_\rho$ and $\hat{\phi}_\rho$, by setting  
\[\hat{\psi}_\rho(t)=\psi_\rho(-t)=\phi_\rho(t), \qquad \hat{\phi}_\rho(t)=\phi_\rho(-t)=\psi_\rho(t), \quad t\in\Rr.\]
Then,  we have the  following adjointness property  for \(f,g\in L^1_{\mathrm{loc}}(\mathbb{R})\):
\[
\int_{\mathbb{R}} f(t)\,(\phi_\rho\ast_t g)(t)\,\dt
=\int_{\mathbb{R}} g(t)\,(\hat\phi_\rho\ast_t f)(t)\,\dt = \int_{\mathbb{R}} g(t)\,(\psi_\rho\ast_t f)(t)\,\dt,
\]
where $\ast_t$ denotes the convolution in time and, as usual,  is given by 
\[
(\phi_\rho \ast_t f)(t) = \int_\Rr \phi_\rho(t-s)f(s) \, ds.
\]

We use these mollifiers to define the sequences
\[\bar{\eta}_\rho=\psi_\rho\ast_t\psi_\rho\ast_t \bar{m},\quad \bar{v}_\rho=\phi_\rho\ast_t\phi_\rho\ast_t \bar{u}, \]
for which it can be checked that
\begin{enumerate}
        \item \(\bar{\eta}_\rho \to \bar{m} \quad \text{in } {L^r(\Tt^d\times (0,T))}\);
        \item \(\bar{v}_\rho \to \bar{u} \quad \text{in } {L^\gamma\left( \Tt^d\times (0,T)\right)}\);
        \item \(\bar\eta_\rho(\cdot,t)=0\) for \(t \leq 0\) and \(\bar v_\rho(\cdot,t)=0\) for \(t \ge T\);
        \item \((\psi_\rho\ast_t \bar{m})(\cdot,t) = 0\) for \(t \leq 0\) and \((\phi_\rho\ast_t \bar{u})(\cdot,t)=0\) for \(t \ge T\).
    \end{enumerate}

\subsubsection{Approximation in the evolutionary case}
\label{FBM}

We now verify the approximation component of Assumption~\ref{2.4} in the evolutionary
setting by explicitly constructing smooth approximants compatible with the initial and terminal data. 
    Let \( (\eta, v) \in {W^{1,r_1}}(\Omega)\times W^{1,\gamma_1}(\Omega)\) be a pair satisfying \eqref{TestB.C.}  in the sense of traces, such that \(  c_\eta=\essinf_\Omega \eta > 0 \). We define
    \begin{align}\label{eq:barc}
        \bar c = \min\Big\{ c_\eta, \min_{\Tt^d} m_0 \Big\}>0,
    \end{align}
noting that $\min_{\Tt^d} m_0 >0$ is guaranteed because $m_0$ is assumed to be a smooth positive function in Problem~\ref{Tproblem}.

\begin{enumerate}
        \item \textbf{Extensions:} We extend  \(\eta\) for \(t < 0\) by setting it equal to \(m_0\), denoting the extended function by $\bar \eta$, and we extend  \(v\) for \(t > T\) by setting it equal to \(u_T\), which we denote by $\bar v$. More precisely,
        \[
        \bar{\eta}(x,t) = \begin{cases}
           m_0(x), &\quad  t < 0, \\
           \eta(x,t), &\quad 0\leq t\leq T,
        \end{cases}
        \quad \text{and} \quad
        \bar{v}(x,t) = \begin{cases}
           v(x,t), &\quad 0\leq t\leq T, \\
           u_T(x), &\quad t > T.
        \end{cases}
        \]

        \item \textbf{Shift and Mollification:} For \(0<{  h}<\frac T2\), we define the time-shifted functions \(\bar{\eta}_{  h}\colon  \Tt^d \times (-\infty, T+{  h}] \to \mathbb{R}^+\)  and \(\bar{v}_{  h}\colon \Tt^d \times [-{  h}, \infty) \to \Rr\)  by
        \[
        \bar{\eta}_{  h}(x,t) = \bar{\eta}(x, t - {  h}) \quad \text{and} \quad \bar{v}_{  h}(x,t) = \bar{v}(x, t + {  h}).
        \]
        Next, for \(0<{ \lambda}<\frac{  h}2\),  we consider a standard space-time mollifier \({  \mu}_{ \lambda}\in C^\infty_c(\Rr^d \times \Rr)\)   supported in \(B_{ \lambda}(0)\subset \Rr^d \times \Rr\). 

        \item \textbf{Boundary Correction, while preserving Positivity:} Finally, we define 
        \begin{align*}
            &\bar \eta_{  h}^{ \lambda} (x,t) = ({  \mu}_{ \lambda} \ast_{x,t} (\bar{\eta}_{  h} - m_0))(x,t) + m_0(x),\\
            &\bar v_{  h}^{ \lambda} (x,t) = ({  \mu}_{ \lambda} \ast_{x,t} (\bar{v}_{  h} - u_T))(x,t) + u_T(x).
        \end{align*}

Given the smoothness of $m_0$ and $u_T$ assumed in the problem statement, we have $(\bar \eta_{  h}^{ \lambda}, \bar v_{  h}^{ \lambda}) \in C^\infty (\overline \Omega) 
\times C^\infty (\overline \Omega)$.

On the other hand, because ${  \mu}_{ \lambda}(\cdot, -s) = 0$ for $|s|>{ \lambda}$ and $(\bar{\eta}_{  h} - m_0)(\cdot, s) =0$ for $|s|\leq { \lambda}$, we have $\bar \eta_{  h}^{ \lambda} (x,0) = m_0(x)$. Similarly, ${  \mu}_{ \lambda}(\cdot, T -s) = 0$ for $|T-s|>{ \lambda}$ while $(\bar v_{  h} - u_T) (\cdot,s) =0 $ for $|T-s|\leq{ \lambda}$, yielding $\bar v_{  h}^{ \lambda} (x,T)= u_T(x)$.

Regarding the positivity of $\bar \eta_{  h}^{ \lambda}$, we observe that $\lim_{{ \lambda}\to0} \Vert m_0 - {  \mu}_{ \lambda} \ast_{x,t} m_0\Vert_{L^\infty(\Omega)} = 0$ because $m_0$ is, in particular, a continuous function. Thus, 
for all ${ \lambda}$ sufficiently small, we have $\Vert m_0 - {  \mu}_{ \lambda} \ast_{x,t} m_0\Vert_{L^\infty(\Omega)} < \frac{\bar c}2$, where $\bar c$ is given by \eqref{eq:barc}. Consequently, for all such 
${ \lambda}$, it follows that
\begin{align*}
    \bar \eta_{  h}^{ \lambda} = {  \mu}_{ \lambda} \ast_{x,t} \bar{\eta}_{  h} + m_0 - {  \mu}_{ \lambda} \ast_{x,t} m_0 > \bar c - \tfrac{\bar c}2 = \tfrac{\bar c}2.
\end{align*}
        
        \item \textbf{Convergence:} The properties of the shift and convolution in the second  point  above, together with the identities $\bar \eta = \eta$ and $\bar v = v$ in $\Omega$, yield
        \begin{align*}
            \lim_{{  h}\to0} \lim_{{ \lambda}\to0} \Vert \bar \eta_{  h}^{ \lambda} - \eta \Vert_{W^{1,r_1}(\Omega)} = 0 \quad \text{and}\quad \lim_{{  h}\to0} \lim_{{ \lambda}\to0} \Vert \bar v_{  h}^{ \lambda} - v \Vert_{W^{1,\gamma_1}(\Omega)} = 0.
        \end{align*}
 Then, a diagonal argument gives the sequence required for the approximation part of Assumption~\ref{2.4}.  
    \end{enumerate}

\section{Proof of Uniqueness for Second-Order Stationary MFG with Scalar Diffusion Coefficient}\label{sect:proofuniqstat}
In this section, we prove the weak–strong uniqueness of solutions to Problem~\ref{Sproblem} (Theorem~\ref{Th:uniq-stat}). 
This result extends prior work on first-order stationary MFGs \cite{ferreiraWeakstrongUniquenessSolutions2025} to the second-order setting with a spatially varying diffusion coefficient, $a(\cdot)$. We characterize the system \eqref{SMFG2} as a monotone operator and linearize the corresponding variational inequality at the strong solution. To handle the variable diffusion terms, we employ the coefficient-adapted mollification strategy detailed in Section~\ref{R8}, utilizing specific perturbations that ensure the precise cancellations required for the uniqueness proof.

\begin{proof}[Proof of Theorem~\ref{Th:uniq-stat}]
As in the theorem statement, let \( (\tilde{m}, \tilde{u}) \in L^r(\Tt^d) \times W^{1,\gamma}(\Tt^d) \) be a weak solution and \( (m, u) \in W^{1,r_1}(\Tt^d) \times W^{1,\gamma_1}(\Tt^d) \) be a strong solution to Problem~\ref{Sproblem}. By Remark~\ref{2.5}, we have $u\in W^{2,r'}(\Tt^d)$. Moreover, 
because Assumption~\ref{2.12} implies $\gamma/2>r'$, and Assumption~\ref{2.1} gives $\gamma_1\ge \gamma$, we conclude that
\begin{align*}
    u \in W^{1,\gamma_1}(\Tt^d) \cap W^{2,r'}(\Tt^d).
\end{align*}

We now proceed in three steps.

\textbf{Step 1 (Comparing the weak inequality and the strong identity for a common smooth pair).}
Fix $(\eta,v)\in C^\infty(\Tt^d; \mathbb{E}) \times C^\infty(\Tt^d)$. Starting from the weak solution (cf.~Definition~\ref{def:weaksol}), we  integrate by parts in the last two integral terms in \eqref{weaksol} to obtain
  \begin{align}\label{E4}
    &\int_{\Tt^d} (\eta - \tilde{m})(-v +a(x)\Delta v- H(x, Dv, \eta)) \,\dx \nonumber \\
    &\quad + \int_{\Tt^d} \left[(v - \tilde{u})(\eta-1)+ (Dv - D\tilde{u}) \cdot (D(a(x)\eta)+\eta D_p H(x, Dv, \eta))\right] \,\dx \ge 0.
\end{align}
    For the strong solution (cf.~Definition~\ref{def:strongsol-S}), we use Assumptions~\ref{2.1} and~\ref{2.9}, condition \eqref{superQ} in Assumptions~\ref{2.12}, and the elliptic regularity discussed in Remark~\ref{2.5} to use pairs in $L^r(\Tt^d)\times W^{1,\gamma}(\Tt^d)$ as test functions in the sense of distributions. This allows us to 
    test \eqref{SMFG2} against the pair $(\eta-\tim,\;v-\tiu)$, from which we obtain
\begin{align}
    &\int_{\Tt^d} (\eta - \tilde{m}) \big(-u + a(x) \Delta u - H(x, Du, m)\big) \,\dx \nonumber \\
    &\quad + \int_{\Tt^d} (v - \tilde{u}) (m -1)+ (Dv - D\tilde{u}) \cdot \big(m D_p H(x, Du, m) + D(a(x)m)\big) \,\dx = 0.\label{E5}
\end{align}

\textbf{Step 2 (Reducing regularity of the smooth pair \((\eta,\; v)\)).} 
Now, we show that \eqref{E4} and \eqref{E5} hold for test functions \( (\eta, v)\in W^{1,r_1}(\Tt^d;\Rr^+) \times  (W^{1,\gamma_1}(\Tt^d) \cap W^{2,r'}(\Tt^d)) \)  in a certain neighborhood of the strong solution.

Let $\varepsilon_0>0$ be given by Assumption~\ref{2.4}, and fix 
a pair \( (\eta, v)  \in W^{1,r_1}(\Tt^d; \mathbb{R}^+) \times  (W^{1,\gamma_1}(\Tt^d)  \cap \allowbreak W^{2,r'}(\Tt^d)) \), with $\essinf \eta>0$ and satisfying \eqref{eq:neigh}, and an approximating smooth sequence \( (\eta_k, v_k)_k \subset C^\infty(\Tt^d; \mathbb{R}^+) \times C^\infty(\Tt^d) \), as described  in Assumption~\ref{2.4}. Substituting this smooth sequence into \eqref{E4} and \eqref{E5} in place of $(\eta,v)$ and 
using Assumptions~\ref{2.1} and~\ref{2.9}, condition \eqref{superQ} in Assumptions~\ref{2.12}, and the convergence properties in Assumption~\ref{2.4} to pass to the limit in the resulting expressions, we conclude that  \eqref{E4} and \eqref{E5} hold for any \( (\eta, v) \in W^{1,r_1}(\Tt^d; \mathbb{R}^+) \times  (W^{1,\gamma_1}(\Tt^d) \cap W^{2,r'}(\Tt^d))  \) satisfying $\essinf \eta>0$ and \eqref{eq:neigh}.

In particular, for any such pair $(\eta,v)$, we obtain by subtracting \eqref{E5} from \eqref{E4} that
  \begin{align}\label{Subtracted}
    &\int_{\Tt^d} (\eta - \tilde{m})\big(-(v - u) +a(x)(\Delta v-\Delta u)- H(x, Dv, \eta) + H(x, Du, m)\big) \,\dx  \nonumber\\
    &\quad + \int_{\Tt^d} (v - \tilde{u})(\eta - m) \,\dx + \int_{\Tt^d} (Dv - D\tilde{u}) \cdot \big(D(a(x)\eta)-D(a(x)m)\big) \,\dx  \nonumber\\
    &\quad + \int_{\Tt^d} (Dv - D\tilde{u}) \cdot \big(\eta D_p H(x, Dv, \eta) - m D_p H(x, Du, m)\big) \,\dx \ge 0.
\end{align}

Let $\bar{\varepsilon}\in(0,\varepsilon_0]$ be given by Assumption~\ref{2.10}, and take \((\bar\eta,\bar v)\in C^2(\Tt^d)\times C^2(\Tt^d)\) arbitrarily. Then, using Assumption~\ref{2.2} and Remark~\ref{2.5}, we can find $\varepsilon_1\in (0,\bar\varepsilon]$ such that for all $\varepsilon\in [-\varepsilon_1,\varepsilon_1]$, the functions defined by \(v_\varepsilon=u+\varepsilon\bar v\) and \(\eta_\varepsilon=m+\varepsilon\bar\eta\) satisfy \( (\eta_\varepsilon, v_\varepsilon) \in W^{1,r_1}(\Tt^d; \mathbb{R}^+) \times  (W^{1,\gamma_1}(\Tt^d) \cap W^{2,r'}(\Tt^d))  \), 
\[
\essinf \eta_\varepsilon>0,\quad \text{and} \quad \|(m,u)-(\eta_\varepsilon,v_\varepsilon)\|_{W^{1,r_1}(\Tt^d)\times W^{1,\gamma_1}(\Tt^d)}\leq \varepsilon_0.
\]
In particular, for any such $\varepsilon \in [-\varepsilon_1,\varepsilon_1]$, we can use 
\eqref{Subtracted} with \( \eta = \eta_\varepsilon \) and \( v = v_\varepsilon\), from which we deduce that  
        \begin{align}
        \label{Ie}
    \Ii(\varepsilon) &= \int_{\Tt^d} (m - \tilde{m} + \varepsilon \bar{\eta}) \big( -\varepsilon \bar{v} +\varepsilon a(x)\Delta\bar{v} - H(x, Dv_\varepsilon, \eta_\varepsilon) + H(x, Du, m) \big)\,\dx \\\notag
    &\quad + \int_{\Tt^d} (u - \tilde{u} + \varepsilon \bar{v})(\varepsilon \bar{\eta}) + (Du - D\tilde{u} + \varepsilon D\bar{v}) \cdot \big(\varepsilon D( a(x) \bar{\eta}) \big)\,\dx \\\notag
    &\quad + \int_{\Tt^d} (Du - D\tilde{u} + \varepsilon D\bar{v}) \cdot \big( \eta_\varepsilon D_p H(x, Dv_\varepsilon, \eta_\varepsilon) - m D_p H(x, Du, m) \big)\,\dx \ge 0.
    \end{align}
Hence, $\Ii(0) = 0$ and $\Ii(\varepsilon) \geq 0$ for  $\varepsilon\in[-\varepsilon_1,\varepsilon_1]$; in other words, $\Ii$ attains a minimum at $\varepsilon=0$. Because $\Ii$ is differentiable in $(-\varepsilon_1,\varepsilon_1)$, we have $\Ii'(0) = 0$. Furthermore, by applying Remark~\ref{2.5} and Assumption \ref{2.10}, we may differentiate under the integral sign and then let $\varepsilon\to0^+$ to conclude that $\Ii'(0) = 0$ can be equivalently written as

    \begin{align}\label{E6}
       &\int_{\Tt^d} \Big[ (m - \tilde{m}) \left( - \bar{v} - D_p H(x, Du, m) \cdot D\bar{v} - D_m H(x, Du, m) \bar{\eta} \right) + (u - \tilde{u}) \bar{\eta}\Big]\,\dx \nonumber \\
        &\quad  + \int_{\Tt^d} 
    \big[(m-\tim)(a(x)\Delta\bar{v})+(Du-D\tiu)\cdot(D(a(x)\bar{\eta}))\big]\,\dx \nonumber \\
    &\quad + \int_{\Tt^d} (Du - D\tilde{u}) \cdot \big( \bar{\eta} D_p H(x, Du, m) + m D^2_{pp} H(x, Du, m) D\bar{v} \big)\,\dx \nonumber\\
    &\quad + \int_{\Tt^d} (Du - D\tilde{u}) \cdot \big(m D^2_{pm} H(x, Du, m) \bar{\eta} \big)\,\dx = 0.
    \end{align}
    \textbf{Step 3 (Main linearization argument).}
    If $a\equiv 0$ in \eqref{E6}, we recover the case treated in \cite{ferreiraWeakstrongUniquenessSolutions2025}. In this case, the linearity of $\Ii'(0)$ in $(\bar\eta, \bar v)$ and Assumption~\ref{2.12} would allow us to conclude that $\Ii'(0) = 0$ holds for all $\bar\eta \in L^r(\Tt^d)$ and $\bar v \in W^{1,\gamma}(\Tt^d)$, matching the regularity of the weak solution. We could then choose $(\bar\eta, \bar v)=(m-\tilde m, u - \tilde u)$, yielding a quadratic form where Assumption~\ref{2.13} ensures uniqueness. We note that the diffusion terms in \eqref{E6} formally cancel
upon choosing $(\bar\eta, \bar v) = (m - \tilde m, u - \tilde u)$,
since
\[
\int_{\Tt^d} \big[(m - \tilde m)(a(x)\Delta(u - \tilde u))
+ (Du - D\tilde u) \cdot D(a(x)(m - \tilde m))\big]\,\dx = 0
\]
by integration by parts. However, this computation requires
$(m - \tilde m) \in W^{1,\gamma'}(\Tt^d)$ and
$(u - \tilde u) \in W^{2,r'}(\Tt^d)$, which are not available at the
regularity level of the weak solution. The coefficient-adapted
mollification introduced below approximates precisely this
cancellation while remaining within the admissible regularity class.

To implement this strategy, we use the periodic extension and regularization discussed in Section~\ref{R8}. Specifically, identifying a function $w$ on $\Tt^d$ with a $Q$-periodic function $w_\#$ on $\Rr^d$, where $Q=(-\tfrac12,\tfrac12)^d$, \eqref{E6} is equivalent to saying that
\begin{align}\label{E6onQ}
       &\int_{Q} \Big[ (m_\# - \tilde{m}_\#) \big( - {v} - D_p H_\#(x, Du_\#, m_\#) \cdot D{v} - D_m H_\#(x, Du_\#, m_\#) {\eta} \big) + (u_\# - \tilde{u}_\#) {\eta}\Big]\,\dx \nonumber \\
        &\quad  + \int_{Q} 
    \big[(m_\#-\tim_\#)(a_\#(x)\Delta{v})+(Du_\#-D\tiu_\#)\cdot D(a_\#(x){\eta})\big]\,\dx \nonumber \\
    &\quad + \int_{Q} (Du_\# - D\tilde{u}_\#) \cdot \big( {\eta} D_p H_\#(x, Du_\#, m_\#) + m_\# D^2_{pp} H_\#(x, Du_\#, m_\#) D{v} \big)\,\dx \nonumber\\
    &\quad + \int_{Q} (Du_\# - D\tilde{u}_\#) \cdot \big(m_\# D^2_{pm} H_\#(x, Du_\#, m_\#) {\eta} \big)\,\dx = 0
    \end{align}
for all $Q$-periodic pairs \((\eta, v)\in C^2(\Rr^d)\times C^2(\Rr^d)\). Let ${  \delta} > 0$ be the mollification parameter. For $x\in\Rr^d$, we define
\begin{align}\label{eq:molletaepsi}
{\eta}_{  \delta} (x) = \frac{1}{a_\#(x)}\big(\theta_{  \delta}\ast\big(\theta_{  \delta}\ast(a_\#(m_\#-\tilde m_\#))\big)\big)(x)
\end{align}
and
\begin{align}\label{eq:mollvepsi}
{v}_{  \delta}(x) = \big(\theta_{  \delta}\ast\big(\theta_{  \delta}\ast(u_\#-\tilde u_\#)\big)\big) (x),
\end{align}
as in Section~\ref{R8}. Substituting $(\eta,v) = ({\eta}_{  \delta} ,{v}_{  \delta})$ into \eqref{E6onQ} and using the symmetry of the mollifiers (see \eqref{eq:propmoll}), the diffusion terms vanish identically for any ${  \delta}$:

 \begin{align*}
     &\int_{Q} \big[
    (m_\#-\tim_\#)(a_\#(x)\Delta{v}_{  \delta})+(Du_\#-D\tiu_\#)\cdot D(a_\#(x){\eta}_{  \delta})\big]\,\dx\\
    &\quad = \int_{Q} 
    (m_\#-\tim_\#)(a_\#(x)\Delta(\theta_{  \delta} \ast (\theta_{  \delta} \ast(u_\#-\tiu_\#))))\,\dx\\
    &\qquad +\int_{Q} (Du_\#-D\tiu_\#)\cdot D(\theta_{  \delta} \ast (\theta_{  \delta} \ast( a_\#(\cdot) (m_\#-\tim_\#)))) \,\dx\\
    &\quad = -\int_{Q} D(\theta_{  \delta} \ast (a_\#(\cdot) (m_\#-\tim_\#)) )\cdot D(\theta_{  \delta} \ast (u_\#-\tiu_\#)) \,\dx\\
    &\qquad +\int_{Q}D(\theta_{  \delta} \ast (a_\#(\cdot) (m_\#-\tim_\#))) \cdot D(\theta_{  \delta} \ast (u_\#-\tiu_\#)) \,\dx\\&\quad =0.
 \end{align*}
  Moreover, using the regularity of the mollifiers as seen in Section \ref{R8}, we can take the limit  \({  \delta} \to 0\) in the remaining integral terms, which, written on $\Tt^d$, yields
\begin{equation*}
    \begin{aligned}
     &\int_{\Tt^d}   \Big[ - D_m H(x, Du, m)(m - \tilde{m})^2 +(Du - D\tilde{u})^T  m D^2_{pp} H(x, Du, m) (Du - D\tilde{u})\\
     &\qquad +\, m D^2_{pm} H(x, Du, m) \cdot (Du - D\tilde{u})(m - \tilde{m}) \Big]\,\dx = 0.
    \end{aligned}
\end{equation*}
Applying Assumption~\ref{2.13} with $\xi=Du-D\tiu$ and $z=m-\tim$, the integrand is a quadratic form in $(Du - D\tiu, m - \tim)$ that is positive definite by Assumptions~\ref{2.13} and~\ref{2.2}, yielding $\omega(x)(|Du-D\tiu|^2+|m-\tim|^2)=0$ a.e.
Accordingly, we must have $m = \tim$ and $Du = D\tiu$ a.e. Finally, using these identities in \eqref{E6} yields $\int_{\Tt^d} (u- \tiu)\bar \eta \,\dx =0$ for all $\bar\eta\in C^2(\Tt^d)$, which concludes the proof.
\end{proof}

\section{Proof of Uniqueness for Second-Order Time-Dependent Mean-Field Games}\label{sect:proofuniqTime}

We now adapt the weak-strong uniqueness argument from the stationary case (Section~\ref{sect:proofuniqstat}) to the time-dependent setting. The core strategy of linearizing the variational inequality at the strong solution remains the same. The main difficulty imposed by the time-dependent framework, which is absent in the stationary problem, is the appearance of boundary terms in time  that arise from  integrating  time-dependent terms by parts.

We use the one-sided mollifiers which vanish at the time boundaries $t=0$ and $t=T$, see Section \ref{FBM}. These ensure that all problematic time-boundary terms are zero before passing to the limit (cf. Step~3 of the proof of Theorem~\ref{Th:uniq-stat}). This technique, combined with spatial mollifiers, allows us to replicate the stationary argument, isolate the coercive quadratic form, and conclude that the weak and strong solutions coincide.

\begin{proof}[Proof of Theorem~\ref{Th:uniq-time}]
As mentioned above, we proceed analogously to the proof of Theorem~\ref{Th:uniq-stat}; therefore, we detail only the main differences.

\textbf{Step 1 (Testing weak and strong solutions).} Take $(\eta, v)\in C^\infty(\Tt^d\times[0,T];\mathbb{E})\times C^\infty(\Tt^d\times[0,T])$ satisfying \eqref{TestB.C.}. 
Performing an integration by parts in the weak formulation \eqref{weaksol2} leads to
\begin{align}\label{E4t}
  &\int_0^T\!\!\!\int_{\Tt^d} (\eta-\tilde m)\big(v_t+a(x)\Delta v-H(x,Dv,\eta)\big)\,\dx\dt \nonumber \\
  &\quad +\int_0^T\!\!\!\int_{\Tt^d} \big[(v-\tilde u)\eta_t + (Dv-D\tilde u)\cdot\big(D(a(x)\eta)+\eta D_pH(x,Dv,\eta)\big)\big]\,\dx\dt \ge 0.
  \end{align}

Next, testing the strong solution against $(\eta-\tim,\;v-\tiu)$, which is an admissible pair by the regularity in Remark~\ref{2.5} and Assumption~\ref{2.9}, we obtain
\begin{align}\label{E5t}
  &\int_0^T\!\!\!\int_{\Tt^d} (\eta-\tilde m)\big(u_t+a(x)\Delta u-H(x,Du,m)\big)\,\dx\dt \nonumber \\
  &\quad +\int_0^T\!\!\!\int_{\Tt^d}\big[ (v-\tilde u)m_t + (Dv-D\tilde u)\cdot\big(D(a(x)m)+mD_pH(x,Du,m)\big)\big]\,\dx\dt =0.
  \end{align}

\textbf{Step 2 (Reducing regularity and linearization).}
Let $\varepsilon_0>0$ be given by Assumption~\ref{2.4}, and take $(\eta,v)\in X_1 \times X_2$, where
$X_1 = W^{1,r_1}(\Tt^d\times(0,T))$ and
$X_2 = W^{1,\gamma_1}(\Tt^d\times(0,T))
\cap W^{1,r'}(0,T; L^{r'}(\Tt^d))
\cap L^{r'}(0,T; W^{2,r'}(\Tt^d))$, with $\essinf\eta>0$, satisfying \eqref{TestB.C.} in the sense of traces and  \eqref{eq:neigh}. 

 Choosing a smooth approximation
as in Assumption~\ref{2.4}, and then passing to the limit using Assumptions~\ref{2.9} and \ref{2.1}, 
we conclude that \eqref{E4t} and \eqref{E5t} hold for every such Sobolev pair $(\eta,v)$.
 Subtracting the left-hand side of \eqref{E5t} from the left-hand side of \eqref{E4t} yields, for all such $(\eta,v)$,
\begin{align}
    &\int_0^T\!\!\!\int_{\Tt^d} (\eta - \tilde{m})\big((v - u)_t +a(x)(\Delta v-\Delta u)- H(x, Dv, \eta) + H(x, Du, m)\big) \,\dx\dt \nonumber  \\
    &\quad +\int_0^T \!\!\!\int_{\Tt^d} \big[ (v - \tilde{u})(\eta - m)_t+ (Dv - D\tilde{u}) \cdot \big(D(a(x)\eta)-D(a(x)m)\big)\big] \,\dx\dt\nonumber \\
    &\quad+\int_0^T\!\!\! \int_{\Tt^d} (Dv - D\tilde{u}) \cdot \big(\eta D_p H(x, Dv, \eta) - m D_p H(x, Du, m)\big) \,\dx\dt \ge 0. \label{eq:weaktimes}
\end{align}
Fix $\bar\eta,\bar v\in C^2(\Tt^d\times[0,T])$ satisfying \(\bar\eta(\cdot,0)=0\) and \(\bar v(\cdot,T)=0\), and let $\bar\varepsilon>0$ be as in Assumption~\ref{2.10}. For $\varepsilon\in (-\bar\varepsilon, \bar\varepsilon)$, define the perturbed pair
\[
(\eta_\varepsilon, v_\varepsilon)= (m+\varepsilon\bar\eta, u+\varepsilon\bar v).
\]
By Assumption~\ref{2.2}, we have for all  $|\varepsilon|$ small enough that $\eta_\varepsilon\ge0$, \(\eta_\varepsilon(\cdot,0)=m_0(\cdot)\) and \(v_\varepsilon(\cdot,T)=u_T(\cdot)\) in the sense of traces, and 
\[
\|(\eta_\varepsilon,v_\varepsilon)-(m,u)\|_{W^{1,r_1}(\Tt^d\times(0,T))\times W^{1,\gamma_1}(\Tt^d\times(0,T))}\le\varepsilon_0.
\]
We can then take $(\eta,v)=(\eta_\varepsilon,v_\varepsilon)$ in \eqref{eq:weaktimes}, from which we obtain 
\begin{align*}
    \mathcal I(\varepsilon)=&\int_0^T \!\!\!\int_{\Tt^d} (m-\tilde m+\varepsilon\bar\eta)\big(\varepsilon\bar v_t+\varepsilon a(x)\Delta\bar v - H(x,Dv_\varepsilon,\eta_\varepsilon)+H(x,Du,m)\big)\,\dx\dt\\
     &+\int_0^T \!\!\!\int_{\Tt^d}(u-\tilde u+\varepsilon\bar v)(\varepsilon\bar\eta)_t+(Du-D\tilde u+\varepsilon D\bar v)\cdot\big( \varepsilon  D(a(x)\bar\eta) \big)\,\dx\dt\\
    &+\int_0^T \!\!\!\int_{\Tt^d} (Du-D\tilde u+\varepsilon D\bar v)\cdot\big( \eta_\varepsilon D_pH(x,Dv_\varepsilon,\eta_\varepsilon)-mD_pH(x,Du,m)\big)\,\dx\dt \;\ge 0.
\end{align*}
Since $\Ii(\varepsilon) \ge 0$ for all $\varepsilon$ and $\Ii(0) = 0$, $\Ii$ attains a minimum at $\varepsilon=0$. Furthermore, because $\Ii$ is differentiable, we must have $\Ii'(0) = 0$. Next, we observe that Remark~\ref{2.5} and Assumption~\ref{2.10} allow us to differentiate under the integral, yielding
\begin{align}
    \Ii'(0)  &= I + II + \int_0^T\!\!\!\int_{\Tt^d}  (m- \tilde m) (-D\bar v \cdot D_p H(x,Du,m) -\bar \eta D_m H(x,Du,m))\,\dx\dt\nonumber\\
    &\quad + \int_0^T\!\!\!\int_{\Tt^d} \big[ (Du - D\tilde{u}) \cdot \big(\bar{\eta} D_p H(x, Du, m) + m D^2_{pp} H(x, Du, m) D\bar{v} \nonumber\\
    &\qquad\qquad\quad+\, m D^2_{pm} H(x, Du, m) \bar{\eta}\big)\big]\,\dx\,\dt,\label{eq:I'(0)}
\end{align}
where
\begin{align}
    &I= \int_0^T\!\!\!\int_{\Tt^d} \big [(m - \tilde{m})  \bar{v}_t  +(u-\tiu)\bar{\eta}_t\big] \,\dx\,\dt,\label{eq:I}\\
    &II=\int_0^T\!\!\! \int_{\Tt^d} \big[ (m - \tilde{m}) a(x) \Delta\bar{v} + (Du-D\tiu)\cdot  D(a(x)\bar{\eta})\big]\,\dx\dt.\label{eq:II}
\end{align}

\textbf{Step 3 (Main mollification argument).}
Following the strategy from Step~3 of Theorem~\ref{Th:uniq-stat}, we will choose $(\bar\eta,\bar v)$ to be specific mollified approximations in space and time of  $(m - \tilde m)$ and $(u - \tilde u)$. This construction ensures that the problematic terms, first $II$ and then $I$,  vanish before passing to the limit. 

For simplicity, we will drop the explicit distinction between functions on $\Tt^d$ and their $Q$-periodic identification on $\Rr^d$, as discussed in Section \ref{R8} and detailed in the proof of Theorem~\ref{Th:uniq-stat}.
Also, to differentiate  the spatial convolution from the temporal convolution (cf.~Section~\ref{mol}), we will use the notation $\ast_x$ and $\ast_t$, respectively. Recalling the zero extensions $\bar m$ and $\bar u$ in \eqref{eq:defext0} of $(m - \tilde m)$ and $(u - \tilde u)$, respectively,   we  define the following test functions:
\[
\bar\eta^{\rho,{  \delta}}=\psi_\rho\ast_t \big( \psi_\rho\ast_t(\tfrac{1}{a}\kappa_{  \delta}\ast_x(a\bar m))\big)\quad \text{and} \quad 
\bar v^{\rho,{  \delta}}=\phi_\rho\ast_t \big( \phi_\rho\ast_t(\kappa_{  \delta}\ast_x\bar u)\big),
\]
where $\kappa_{  \delta}=\theta_{  \delta}\ast_x\theta_{  \delta}$. We observe $\bar\eta^{\rho,{  \delta}}$, $\bar v^{\rho,{  \delta}} \in C^2(\overline \Omega)$ and,  as in point 3 of Section~\ref{FBM}, 
\(\bar\eta^{\rho,{  \delta}}(\cdot,t)=0\) for \(t \leq 0\) and \(\bar v^{\rho,{  \delta}}(\cdot,t)=0\) for \(t \ge T\).

Using integration by parts and the properties of the spatial and one-sided time mollifiers, we find that the variable diffusion cross-terms cancel exactly: the term $II$ in \eqref{eq:II} with $\bar \eta$ replaced by $\bar\eta^{\rho,{  \delta}}$ and $\bar v$ replaced by $\bar v^{\rho,{  \delta}}$ vanishes:
\begin{align*}
 &\int_0^T\!\!\! \int_{Q} \big[ (m - \tilde{m}) a(x) \Delta\bar{v}^{\rho,{  \delta}} + (Du-D\tiu)\cdot D(a(x)\bar{\eta}^{\rho,{  \delta}})\big]\,\dx\dt\\
 &\quad = \int_\Rr \int_{Q} \big[ \bar m a(x) \Delta\big(\phi_\rho\ast_t \big( \phi_\rho\ast_t(\kappa_{  \delta}\ast_x\bar u)\big)\big)  +D\bar u \cdot D\big(\psi_\rho\ast_t \big( \psi_\rho\ast_t(\kappa_{  \delta}\ast_x (a \bar m))\big)\big)\big]\,\dx\,\dt\\
    &\quad =\int_\Rr \int_{Q} \big[ \big(\hat{\phi}_\rho\ast_t (\hat{\phi}_\rho\ast_t (a(x)\bar m))\big)\Delta\big(
    \kappa_{  \delta}\ast_x\bar{u}\big)\\
    &\qquad\qquad\qquad +D\big(\theta_{  \delta}\ast_x \bar u \big) \cdot D\big(\theta_{  \delta}\ast_x\big(\psi_\rho\ast_t(\psi_\rho\ast_t(a\bar{m}))\big)\big)\big]\,\dx\dt\\
    &\quad =\int_\Rr \int_{Q}\big[-D\big(\theta_{  \delta}\ast_x\big(\hat{\phi}_\rho\ast_t (\hat{\phi}_\rho\ast_t (a\bar{m}))\big)\big)\cdot D\big(\theta_
    {  \delta}\ast_x\bar{u}\big)\\
    &\qquad\qquad\qquad +D\big(\theta_{  \delta}\ast_x \bar u \big)\cdot D\big(\theta_{  \delta}\ast_x\big(\psi_\rho\ast_t(\psi_\rho\ast_t(a\bar{m}))\big)\big)\big]\,\dx\dt\\
    &\quad =0
\end{align*}
for all $\rho, {  \delta}>0$, where the last identity follows from
$\hat{\phi}_\rho = \psi_\rho$ (cf.~Section~\ref{emol}).

Hence, passing to the limit as ${  \delta}\to0$ in the identity $0=\Ii'(0)$ with $\bar \eta$ replaced by $\bar\eta^{\rho,{  \delta}}$ and $\bar v$ replaced by $\bar v^{\rho,{  \delta}}$ in \eqref{eq:I'(0)}, \eqref{eq:I}, and \eqref{eq:II}, we conclude that
\begin{align}
    0 & = \int_0^T\!\!\!\int_{\Tt^d}  (m- \tilde m) (-D\bar v^{\rho} \cdot D_p H(x,Du,m) -\bar \eta^{\rho} D_m H(x,Du,m))\,\dx\dt\nonumber\\ 
    &\quad + \int_0^T\!\!\!\int_{Q} \Big[ (Du - D\tilde{u}) \cdot \big(\bar{\eta}^{\rho} D_p H(x, Du, m) + m D^2_{pp} H(x, Du, m) D\bar{v}^{\rho} \nonumber\\
    &\qquad\qquad\quad+ m D^2_{pm} H(x, Du, m) \bar{\eta}^{\rho}\big)\Big]\,\dx\,\dt\nonumber\\
    &\quad+\int_0^T\!\!\!\int_{Q} \big [(m - \tilde{m})  \bar{v}^{\rho}_t  +(u-\tiu)\bar{\eta}^{\rho}_t\big] \,\dx\,\dt,\label{eq:justrho}
\end{align}
where
\[
\bar\eta^{\rho}=\psi_\rho\ast_t ( \psi_\rho\ast_t\bar m)\quad \text{and} \quad 
\bar v^{\rho}=\phi_\rho\ast_t ( \phi_\rho\ast_t\bar u).
\]

Next, recalling the discussion in Section~\ref{FBM} once more, together with an integration by parts, we observe that the last integral term in \eqref{eq:justrho} vanishes:
\begin{align*}
    &\int_0^T\!\!\!\int_{Q} \big [(m - \tilde{m})  \bar{v}^{\rho}_t  +(u-\tiu)\bar{\eta}^{\rho}_t\big] \, dx dt\\
    &\quad = \int_\Rr\int_{Q} \Big[\bar m   \big(\phi_\rho\ast_t \big( \phi_\rho\ast_t\bar u\big)\big)_t  +\bar u \big(\psi_\rho\ast_t \big( \psi_\rho\ast_t\bar m\big)\big)_t \Big]\, dx dt\\
     &\quad= \int_\Rr\int_{Q} \Big[(\hat{\phi}_\rho\ast_t\bar m)  \left(\phi_\rho\ast_t \bar{u}\right)_t  +\big(\hat{\psi}_\rho\ast_t \bar u \big) \big(\psi_\rho\ast_t\bar{m}\big)_t\Big] \, dx dt \\
     &\quad= \int_0^T\int_{Q} \Big[(\hat{\phi}_\rho\ast_t\bar m)  \left(\phi_\rho\ast_t \bar{u}\right)_t  +\big(\hat{\psi}_\rho\ast_t \bar u \big) \big(\psi_\rho\ast_t\bar{m}\big)_t\Big] \, dx dt \\
     &\quad=\int_0^T \int_{Q} \Big[(\psi_\rho\ast_t\bar m)  (\phi_\rho\ast_t \bar{u})_t-(\phi_\rho\ast_t \bar u)_t (\psi_\rho\ast_t\bar{m}) \Big] dx dt \\
     &\qquad+\left[\int_{Q} \left(\phi_\rho\ast_t \bar u \right) \left(\psi_\rho\ast_t\bar{m}\right) dx\right]_0^T\\
     &\quad=0.
\end{align*}
Consequently, letting $\rho\to0$ in \eqref{eq:justrho} yields
\begin{align*}
    &\int_0^T\!\!\!\int_{\Tt^d} \big[ - D_m H(x, Du, m)(m - \tilde{m})^2 + (Du - D\tilde{u})^T  m D^2_{pp} H(x, Du, m)  (Du - D\tilde{u}) \\
    &\qquad\qquad\quad
+ m D^2_{pm} H(x, Du, m) \cdot (Du - D\tilde{u})(m - \tilde{m})\big] \,\dx\dt = 0,
\end{align*}
which together with Assumption~\ref{2.13} 
implies that $Du=D\tilde u$ and $m=\tilde m$ a.e.~in $\Tt^d\times (0,T)$. Substituting $Du=D\tilde u$ and $m=\tilde m$ into $\Ii'(0)=0$ gives
\[
\int_0^T\!\!\int_{\Tt^d} (u-\tilde u)\,\bar\eta_t\,\dx\dt=0
\]
for all $\bar\eta\in C^2(\Tt^d\times[0,T])$ with $\bar\eta(\cdot,0)=0$. Given any $\chi\in C^\infty(\Tt^d\times[0,T])$, set
\[
\bar\eta(x,t)=\int_0^t \chi(x,s)\,ds.
\]
Then $\bar\eta(\cdot,0)=0$ and $\bar\eta_t=\chi$, so
\[
\int_0^T\!\!\int_{\Tt^d} (u-\tilde u)\chi\,\dx\dt=0.
\]
Hence $u=\tilde u$ a.e.~in $\Tt^d\times(0,T)$.
\end{proof}

\begin{remark}
In the  $a=0$ case, Problem~\ref{Tproblem} reduces to a first-order system. For this case, the weak solutions defined by \eqref{Tweak} coincide with strong solutions $(m,u)$ belonging to the space $W^{1,r_1}((0,T);L^{r_1}(\Tt^d)) \times W^{1,\gamma}((0,T)\times \Tt^d)$.
\end{remark}

\section{Weak-Strong Uniqueness for Stationary Mean-Field Game Systems with Quadratic Hamiltonian}\label{sect:quad-uniq}

Now, we turn to the quadratic Hamiltonian case. The new ingredient relative to \cite{FG2} is the second-order estimate in Lemma~\ref{Lemma:SOE} for nonconstant diffusion and logarithmic coupling, which upgrades the regularity of weak solutions beyond the general monotonicity theory. This model presents specific challenges because the standard monotonicity machinery used in Sections~\ref{sect:proofuniqstat} and \ref{sect:proofuniqTime} (and in \cite{ferreiraWeakstrongUniquenessSolutions2025}) cannot be applied directly due to the growth of the Hamiltonian. The primary obstacle is the limited regularity of the weak solutions induced by monotonicity (in the sense of Definition~\ref{def:weaksol}) at this borderline growth regime. Specifically, the existence theory established in \cite{FG2} yields weak solutions that belong only to the space $L^1(\Tt^d ) \times W^{1,2}(\Tt^d )$. Under such limited regularity, Assumption~\ref{2.12} is violated because the integrability condition \eqref{superQ} fails to hold for the exponents $\gamma=2$ and $r=1$. To address this, we formulate the problem as follows.
\begin{problem}\label{Qproblem}
Let $\mathbb{E}$ be either $\Rr^+$ or $\Rr^+_0$.
    Let \( F : \Tt^d \times \mathbb{E} \rightarrow \Rr \) be \(C^\infty\) and strictly increasing in its second variable,  let $a\in C^2(\Tt^d)$ be such that $\min_{\Tt^d} a >0$, and  let $b\in C^1(\Tt^d;\Rr^d)$. 
    We seek pairs
   $(u,m)$, with 
    \(
        u:\Tt^d \to \Rr\) and \(m:\Tt^d \to \mathbb{E}
    \),
    such that the following  stationary second-order MFG system holds on \( \Tt^d \):
    \begin{equation}\label{QMFG}
        \begin{cases}
            -u + \div (a(x) D u ) -b(x)\cdot Du - \frac{1}{2}\,|Du|^2 + F(x,m) = 0\\
            m - \div (a(x) D m ) -\div (mb(x)) - \div (mDu ) = 1.
        \end{cases}
    \end{equation}
\end{problem}

\begin{remark}
    We observe that Problem~\ref{Qproblem} is an instance of Problem~\ref{Sproblem}  by taking $H(x,p,m) = \tfrac{|p|^2}{2} - Da(x)\cdot p +b(x)\cdot p - F(x,m)$ in the latter. We further note that \eqref{QMFG} allows us to go beyond  the simpler quadratic case in which the Hamilton--Jacobi equation is given by $-u + a(x) \Delta  u  - \frac{1}{2}\,|Du|^2 + F(x,m) = 0$, while being able to retrieve this case by taking $b=Da$.  
\end{remark}

In this section, we first improve the regularity of the weak solutions. While sufficient regularity is known for power-like couplings (\cite[Section~6]{FG2}), we extend this to logarithmic couplings below.
Then, in Subsection~\ref{sec6.2}, we establish weak–strong uniqueness for quadratic MFGs, relying on the existence of weak solutions with this improved regularity (covering both logarithmic and power-like couplings). Finally, Section \ref{sect:quad-conv} establishes the strong convergence of the regularization procedure introduced in Section \ref{sec:quad-reg}.

\subsection{Improved regularity for the logarithmic coupling}\label{sec:quad-reg}
In this section, we analyze a subclass of Problem~\ref{Sproblem} characterized by a quadratic Hamiltonian and logarithmic coupling. Our objective is to establish new a priori estimates and regularity results for the density $m$ and the value function $u$ that improve upon the existence regularity obtained in \cite{FG2}. While the variational-inequality methods in that work ensure the existence of weak solutions in  $L^1(\Tt^d ) \times W^{1,2}(\Tt^d)$, this regularity is insufficient for the linearization argument required for uniqueness at the quadratic growth threshold. To overcome this limitation, we focus our analysis on the following specific formulation.

\begin{problem}\label{Qproblem-log}
   Let $a, V\in C^\infty(\Tt^d)$ be such that $\min_{\Tt^d} a >0$, and  let $b\in C^\infty(\Tt^d;\Rr^d)$. We seek pairs
   $(u,m)$, with \(
u: \Tt^d \;\rightarrow\; \Rr\) and \(m: \Tt^d \;\rightarrow\;\Rr^+ \),
such that the following second-order stationary MFG holds on the \( d \)-dimensional torus $\Tt^d$:
\begin{equation}\label{QMFG-log}
\begin{cases}
    -u+\div (a(x)D u ) -b(x)\cdot Du - \frac{1}{2}|D u|^2 - V(x)= -\log(m)\\
m-\div(a(x)D m) -\div (mb(x)) -\div (mDu) = 1.
\end{cases}
\end{equation}
\end{problem}

In the context of strong solutions, certain quadratic MFGs were studied using the Hopf--Cole transformation in \cite{GeRC}, and later 
a similar technique was used in \cite{cirantGeneralizationHopfColeTransformation2015} and \cite{MR4132070}.
While Sobolev regularity has been established for first-order models~\cite{graber2017sobolev} and singular couplings have been explored in other contexts~\cite{2016arXiv161107187C}, those results typically apply to stronger notions of solutions. Proving such estimates specifically for weak solutions in the sense of monotonicity demands a tailored approach. Here, we derive higher-order spatial estimates to restore the necessary integrability.

Following the methodology established in \cite{FG2}, the analysis proceeds by introducing a regularized version of Problem~\ref{Qproblem-log}, constructed by perturbing \eqref{QMFG-log} with higher-order elliptic terms and a singular penalization function. This regularization maintains the monotone structure of the system while ensuring the existence of classical, smooth solutions. We subsequently derive first- and second-order a priori estimates, yielding quantitative bounds that remain uniform with respect to the regularization parameter $\sigma$. These estimates enable a passage to the limit via the Minty--Browder method \cite{Eva, KiSt00}. This allows us to construct weak solutions with significantly better regularity than the $L^1(\Tt^d) \times W^{1,2}(\Tt^d)$ solutions established in \cite{FG2}. 
The regularized problem reads:
\begin{equation}\label{Rmfg}
\begin{cases}
-u+\div(a(x)Du) -b(x)\cdot Du -\frac12|Du|^2-V(x)+\sigma(m+\Delta^{2k}m)+\beta_\sigma(m)=-\log(m)\\
m-\div(a(x)Dm) -\div (mb(x)) -\div(m Du)+\sigma(u+\Delta^{2k}u)=1,
\end{cases}
\end{equation}
where $a$, $V$, and $b$ are as in Problem~\ref{Qproblem-log}, $k\in\mathbb{N}$ is chosen so that $2k-4>\frac{d}{2}+1$, $\sigma\in (0,1)$, and $\beta_\sigma:(0,\infty)\to(-\infty,0]$ is a non-decreasing and $C^\infty$ penalization function that we detail next. For $q>d$, we set
\begin{align}
\beta_\sigma(s)=
\begin{cases}
- s^{-q} & \text{for } 0<s\le\sigma/2,\\
0        & \text{for } s>\sigma,
\end{cases}\label{eq:defbeta}
\end{align}
while on the transition interval, $(\sigma/2,\sigma]$, $\beta_\sigma(\cdot)$ is defined as any smooth monotone interpolation connecting the two regimes in \eqref{eq:defbeta}, serving as a penalization that forces $m$ to be strictly positive while preserving the monotonicity of the system.  We denote by $A_\sigma$ the operator associated with this regularized system \eqref{Rmfg}, given by 
\begin{equation}\label{Rmfg2}
A_\sigma \begin{bmatrix}\eta\\ v\end{bmatrix}=\begin{bmatrix}-v+\div(a(x)Dv) -b(x)\cdot Dv - \frac12|Dv|^2 - V(x) + \sigma(\eta+\Delta^{2k}\eta) + \beta_\sigma(\eta)+ \log(\eta)\\
\eta-\div(a(x)D\eta) -\div (\eta b(x)) - \div(\eta Dv) + \sigma(v+\Delta^{2k}v) - 1
\end{bmatrix}.
\end{equation}

For our analysis, we work with classical solutions $(\mv,\uv)\in C^\infty(\Tt^d;\Rr^+)\times C^\infty(\Tt^d)$ of \eqref{Rmfg}, whose existence follows from the argument in~\cite[Proposition~3.1]{FG2}.

\begin{remark}\label{rem:entropy}
Before establishing the a priori estimates for classical solutions $(m_\sigma, u_\sigma) \in C^\infty(\Tt^d; \mathbb{R}^+) \times C^\infty(\Tt^d)$ of the regularized system \eqref{Rmfg}, we recall a standard analytic inequality for the entropy density. 
As established in \cite{FG2}, the singular penalization term $\beta_\sigma$ is designed to ensure that $m_\sigma > 0$ holds uniformly on $\Tt^d$, rendering the term $\log(m_\sigma)$ well-defined.
Since $s \leq C_\delta + \delta\, s\log s$ for all $s > 0$ and any $\delta > 0$ (for some $C_\delta > 0$), and $\log s \leq s - 1$ for all $s > 0$, integrating over $\Tt^d$ (which has finite measure) yields: for any $\delta > 0$, there exists $C_\delta > 0$, independent of $\sigma$, such that
\begin{equation}\label{entropy}
\max\left\{ \int_{\Tt^d} m_\sigma \,\dx,\; \int_{\Tt^d} \log(m_\sigma) \,\dx \right\}
\leq C_\delta + \delta \int_{\Tt^d} m_\sigma \log(m_\sigma) \,\dx.
\end{equation}
\end{remark}
We now begin by establishing first-order estimates. 

\begin{lemma}[First-order Estimates]\label{Lemma:FOE}
    For $\sigma\in(0,1)$, let $(\mv,\uv)\in C^\infty(\Tt^d;\Rr^+)\times C^\infty(\Tt^d)$ be a classical solution to the regularized system~\eqref{Rmfg} on $\Tt^d$. Then, there exists a constant $C$, depending only on the problem data, such that
\begin{align}
   &(a)\enspace \frac{1}{2}\int_{\Tt^d}|D\uv|^2(\mv+1)\,\dx+\int_{\Tt^d}-\beta_\sigma(\mv)\,\dx+\int_{\Tt^d}\beta_\sigma(\mv)(\mv-\sigma)\,\dx\hfill\nonumber\\
&                  \quad\qquad +\, \sigma \int_{\Tt^d} \Big(|\Delta^k\uv|^2+u_\sigma^2+|\Delta^k\mv|^2+m_\sigma^2\Big)\,\dx\leq  C,\label{eq:1storder1}\\
&(b)\enspace\bigg|\int_{\Tt^d}\mv\log(\mv)\,\dx\bigg| \leq C,\label{eq:1storder2}\\
&(c)\enspace \int_{\Tt^d}\log(\mv)\,\dx \geq -C.\label{eq:1storder3}
\end{align}
  
\end{lemma}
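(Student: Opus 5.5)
The plan is the standard Lasry--Lions cross-testing of the two equations in \eqref{Rmfg}. I will multiply the first equation by $(\mv-1)$ and the second by $\uv$, integrate over $\Tt^d$, and subtract, so that the transport and diffusion cross terms cancel. Concretely, the first equation times $(\mv - 1)$ minus the second times $\uv$ gives
\[
\int_{\Tt^d}\big[(\mv-1)(\text{eq}_1) - \uv(\text{eq}_2)\big]\,\dx=0 .
\]
I expect the following identities after integration by parts on the torus.
\begin{itemize}
\item The term $\int (\mv-1)\div(aD\uv)$ becomes $-\int aD\uv\cdot D\mv$, which cancels against $\int \uv\,\div(aD\mv) = -\int aD\mv\cdot D\uv$ coming from the second equation.
\item The term $-\int (\mv-1)\,b\cdot D\uv$ cancels against $+\int \uv\,\div(\mv b) = -\int \mv b\cdot D\uv$, except for $\int b\cdot D\uv$, which remains.
\item The term $+\int \uv\,\div(\mv D\uv) = -\int \mv|D\uv|^2$ combines with $-\tfrac12\int(\mv-1)|D\uv|^2$ to give $-\tfrac12\int(\mv+1)|D\uv|^2$.
\item The zero-order $\uv$ terms cancel: $-\int(\mv-1)\uv$ against $-\int\uv(\mv - 1)$.
\item The $\sigma$-terms give $\sigma\int(\mv^2+|\Delta^k\mv|^2) - \sigma\int(\mv+\Delta^{2k}\mv)$ and $-\sigma\int(\uv^2+|\Delta^k\uv|^2)$.
\end{itemize}
With the signs arranged (this is where the monotone structure is used), all the good terms $\tfrac12\int(\mv+1)|D\uv|^2$, $\sigma\int(\ldots)$, and the $\log$/$\beta_\sigma$ terms land on one side with a common sign. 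I will check the signs carefully, since the $\sigma\uv$ terms enter with the opposite sign. If that is a problem, the fix is to pair with $(\uv-0)$ versus $(\mv-1)$ and use the monotonicity of $A_\sigma$ against the reference point $(\eta,v)=(1,0)$. In that form the bound reads $\langle A_\sigma(\mv,\uv)-A_\sigma(1,0),(\mv,\uv)-(1,0)\rangle\ge$ the good terms, while the left-hand side equals $-\langle A_\sigma(1,0),(\mv-1,\uv)\rangle$, which is explicit and linear.

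The penalization and log terms then give $\int(\mv-1)\beta_\sigma(\mv)$ and $\int(\mv-1)\log\mv$. The first splits as $\int\beta_\sigma(\mv)(\mv-\sigma)+(\sigma-1)\int\beta_\sigma$. Since $\sigma<1$ and $\beta_\sigma\le0$, we have $(\sigma-1)\beta_\sigma \ge (1-\sigma)(-\beta_\sigma)$, which yields the $\int-\beta_\sigma$ term with constant $(1-\sigma)$. Both $\beta_\sigma(s)(s-\sigma)\ge0$ (because $\beta_\sigma = 0$ for $s > \sigma$) and $(s-1)\log s\ge0$ hold. I will keep $\int \mv\log\mv$ explicitly and treat $-\int\log\mv$ separately.

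The remaining right-hand side consists of linear terms: $\int(\mv-1)V$, $\int b\cdot D\uv$, and $\sigma\int(\mv+\Delta^{2k}\mv) = \sigma\int\mv$. Since $\int\mv=1$ (integrate the second equation, noting $\int\uv$ appears via $\sigma$; more precisely $\int\mv + \sigma\int\uv = 1$), I will control $|\int \mv V|\le\|V\|_\infty\int\mv$ and bound $\sigma|\int \uv|$ by $\tfrac\sigma4\int\uv^2 + C$. The term $\int b\cdot D\uv$ is absorbed by Young's inequality: $\le \tfrac14\int|D\uv|^2 + C$. The $\log$ contributions are handled with \eqref{entropy}: since $\int \mv\log\mv$ appears with a good sign, $\int\mv\le C_\delta+\delta\int\mv\log\mv$ can be absorbed. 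This gives (a) together with an upper bound $\int \mv\log\mv - \int\log\mv \le C$.

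For (b) and (c), the lower bound $\int\mv\log\mv\ge -e^{-1}$ is trivial. For the upper bound on $\int \mv\log \mv$, I use $-\int\log\mv \ge -\int(\mv - 1)$, which is controlled by $\int \mv$ and hence, via \eqref{entropy}, by $C_\delta + \delta\int\mv\log\mv$; absorbing gives (b). Then (c) follows from $-\int\log\mv\le C - \int \mv\log \mv\le C$.

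The main obstacle is the sign bookkeeping of the $\sigma\uv$ and $\sigma\Delta^{2k}$ terms and of $\int\mv$ (which is not exactly $1$), so that every good quantity in \eqref{eq:1storder1} appears with a positive coefficient uniformly in $\sigma\in(0,1)$.
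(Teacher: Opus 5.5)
Your route is the paper's: pair \eqref{Rmfg} with $(\mv-c,\uv)$, integrate by parts, and close with Young's inequality and \eqref{entropy}. The combination must be the \emph{sum} $\int_{\Tt^d}[(\mv-c)(\text{first eq.})+\uv(\text{second eq.})]\,\dx=0$, i.e.\ $\langle A_\sigma(\mv,\uv),(\mv-c,\uv)\rangle=0$, not the difference. Every cancellation in your list (the $a$-terms, the $\mv b$-terms, the zero-order $\uv$-terms) and the formation of $\tfrac12\int(\mv+1)|D\uv|^2\,\dx$ hold only for the sum. For the difference, the diffusion cross terms double and the $\sigma\uv$-terms acquire the wrong sign. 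Your fallback is the correct version; with $c=1$ it gives
\begin{align*}
&\tfrac12\int_{\Tt^d}(\mv+1)|D\uv|^2\,\dx+\sigma\int_{\Tt^d}\big(\uv^2+|\Delta^k\uv|^2+\mv^2+|\Delta^k\mv|^2\big)\,\dx+\int_{\Tt^d}(\mv-1)\big(\beta_\sigma(\mv)+\log\mv\big)\,\dx\\
&\quad=\sigma\int_{\Tt^d}\mv\,\dx+\int_{\Tt^d}(\mv-1)V\,\dx-\int_{\Tt^d}b\cdot D\uv\,\dx .
\end{align*}
From here your treatment of the $\log$ terms and of $\int\mv\,\dx$ goes through.

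The genuine gap is the penalization term. With $c=1$,
\[
\int_{\Tt^d}(\mv-1)\beta_\sigma(\mv)\,\dx=\int_{\Tt^d}(\mv-\sigma)\beta_\sigma(\mv)\,\dx+(1-\sigma)\int_{\Tt^d}\big(-\beta_\sigma(\mv)\big)\,\dx ,
\]
so you only obtain $\int_{\Tt^d}(-\beta_\sigma(\mv))\,\dx\le C/(1-\sigma)$. This degenerates as $\sigma\to1^-$, while (a) requires a constant independent of $\sigma\in(0,1)$; you record the factor $1-\sigma$ but never remove it. This is exactly why the paper tests the first equation with $\mv-\sigma-1$. Then the coefficient of $\int(-\beta_\sigma(\mv))\,\dx$ is exactly $1$. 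Because the shift is only $\sigma$, the new term $-\sigma\int\uv\,\dx\le\frac\sigma2\int\uv^2\,\dx+\frac12$ is absorbed by the regularization, and the other terms only pick up harmless factors $1+\sigma\le2$. A fixed shift such as $c=2$ would not work: it would produce $\int\uv\,\dx$, which is not controlled uniformly in $\sigma$ at this stage.

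Alternatively, keep $c=1$ and treat $\sigma\in[\tfrac12,1)$ separately. On $\{\mv\le\sigma/2\}$ one has $\sigma-\mv\ge\tfrac14$. On $\{\sigma/2<\mv\le\sigma\}$, monotonicity of $\beta_\sigma$ gives $-\beta_\sigma(\mv)\le(\sigma/2)^{-q}\le4^q$. Hence $\int_{\Tt^d}(-\beta_\sigma(\mv))\,\dx\le4\int_{\Tt^d}(\mv-\sigma)\beta_\sigma(\mv)\,\dx+4^q$, which is bounded by the left-hand side of your identity.
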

\begin{proof}
For convenience, we drop the dependence on $\sigma$ and write $(m,u)$ instead of $(\mv,\uv)$. We start by multiplying the first equation of \eqref{Rmfg} by $(m-\sigma-1)$ and the second one by $u$. Summing the resulting equations and integrating over $\Tt^d$, we observe that some of the diffusion cross-terms cancel out. Furthermore, the transport term $\int_{\Tt^d} u \div(m Du)$ combines with the Hamiltonian term $-\tfrac12\int_{\Tt^d} (m-\sigma-1)|Du|^2$ after integrating by parts to yield a positive definite quadratic form. Precisely, we deduce that
\begin{align}\label{eq:first1}
 \int_{\Tt^d}& \Big(\frac12 m + \frac{\sigma+1}{2}\Big) |Du|^2 \,\dx
+ \sigma \int_{\Tt^d} \big(u^2 + |\Delta^ku|^2 + |\Delta^km|^2+m^2\big) \,\dx\nonumber \\
&+ \int_{\Tt^d} (m-\sigma)  \beta_\sigma(m)  \,\dx -    \int_{\Tt^d} \beta_\sigma(m)  \,\dx  + \int_{\Tt^d} m\log(m)  \,\dx \nonumber \\
&=-\sigma \int_{\Tt^d}  u \,\dx 
+\sigma(\sigma+1) \int_{\Tt^d}m\,\dx + \int_{\Tt^d} m V(x) \,\dx - (\sigma+1)\int_{\Tt^d}  V(x)\,\dx\nonumber\\
&\quad + (\sigma+1)\int_{\Tt^d} \log(m)  \,\dx  - (\sigma+1)\int_{\Tt^d} b(x) \cdot Du  \,\dx\nonumber\\
&\leq \frac\sigma2\int_{\Tt^d} u^2\,\dx +\frac12 + \big(2 + \Vert V\Vert_{L^\infty(\Tt^d)}\big) \int_{\Tt^d} m\,\dx + 2\Vert V\Vert_{L^\infty(\Tt^d)}\nonumber\\
&\quad + (\sigma+1)\int_{\Tt^d} \log(m)  \,\dx  + \frac{(\sigma+1)}{4}\int_{\Tt^d} |Du|^2 \,\dx +  \Vert b\Vert^2_{L^\infty(\Tt^d)},
\end{align}
where, in the last estimate, we used  Young's inequality together with the fact that $0<\sigma <1$. We further observe that we can use \eqref{entropy} to find a constant, $C>0$, that depends only on $\Vert V\Vert_{L^\infty(\Tt^d)}$, such that
\begin{align}
    & \big(2 + \Vert V\Vert_{L^\infty(\Tt^d)}\big) \int_{\Tt^d} m\,\dx \leq \frac{1}{4}\int_{\Tt^d} m \log m \,\dx + C,\label{eq:first2}\\
    & (\sigma+1)\int_{\Tt^d} \log(m)  \,\dx \leq \frac{1}{4}\int_{\Tt^d} m \log m \,\dx + C.\label{eq:first2a}
    \end{align}
Consequently, in view of \eqref{eq:first1} and \eqref{eq:first2} first, and \eqref{eq:first2a} second,  we have that
\begin{align}
    \int_{\Tt^d}& \Big(\frac12 m + \frac{\sigma+1}{4}\Big) |Du|^2 \,\dx
+ \frac\sigma2 \int_{\Tt^d} \big(u^2 + |\Delta^ku|^2 + |\Delta^km|^2+m^2\big) \,\dx\nonumber \\
&+ \int_{\Tt^d} (m-\sigma)  \beta_\sigma(m)  \,\dx -    \int_{\Tt^d} \beta_\sigma(m)  \,\dx  + \frac34\int_{\Tt^d} m\log(m)  \,\dx \nonumber \\
& \leq (\sigma+1)\int_{\Tt^d} \log(m)  \,\dx + C\label{eq:first3a}\\
& \leq \frac{1}{4}\int_{\Tt^d} m \log m \,\dx + C \label{eq:first3b}
\end{align}
for some constant $C$ that depends only on $\Vert V\Vert_{L^\infty(\Tt^d)}$ and $\Vert b\Vert_{L^\infty(\Tt^d;\Rr^d)}$.
Finally, observing that  $\inf_{s>0} s\log s > - C$ for some other constant $C>0$ independent of $s$,  and that all terms on the left-hand side of \eqref{eq:1storder1} are nonnegative, we obtain \eqref{eq:1storder1}  and \eqref{eq:1storder2} from \eqref{eq:first3b} (by absorbing the entropy term $\frac{1}{4} \int_{\Tt^d} m \log( m)$ into the left-hand side). In turn, these two estimates and \eqref{eq:first3a} yield \eqref{eq:1storder3}. 
\end{proof}

Next, we prove  second-order estimates. We note that to prove these estimates, we require the Lipschitz norm $\|Da\|_{L^\infty(\Tt^d)}$ to be less than $1$.
\begin{lemma}[Second-order Estimates]\label{Lemma:SOE}
Assume that $b\in C^\infty(\Tt^d;\Rr^d)$, $V\in C^\infty(\Tt^d)$, and $a\in C^\infty(\Tt^d)$ is such that $\min_{\Tt^d} a >0$ and  $\|Da\|_{L^\infty(\Tt^d)}<1$.
For $\sigma\in(0,1)$, let $(\mv,\uv)\in C^\infty(\Tt^d;\Rr^+)\times C^\infty(\Tt^d)$ be a classical solution to the regularized system~\eqref{Rmfg} on $\Tt^d$.  
Then, there exists a positive constant, $C$, depending only on the problem data and independent of $\sigma$, such that
\begin{align*}
    &\int_{\Tt^d} \mv|D^2 \uv|^2 \,\dx
+ \frac12\int_{\Tt^d} \frac{|D\mv|^2}{\mv} \,\dx
+\int_{\Tt^d}\beta'_\sigma(\mv)|D\mv|^2 \,\dx \\
&\quad+\,\sigma  \int_{\Tt^d} \big(|D\uv|^2+|\Delta^k D\uv|^2+|D\mv|^2+|\Delta^k D\mv|^2\big) \,\dx
\leq  C.
\end{align*}
\end{lemma}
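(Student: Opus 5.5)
Our plan is the standard second-order MFG estimate: differentiate both equations and pair them in the Lasry--Lions way. We drop the index $\sigma$ and write $(m,u)$ for $(\mv,\uv)$. For each direction $e_i$ we apply $\partial_{i}$ to both equations of \eqref{Rmfg}. We multiply the differentiated first equation by $\partial_i m$ and the differentiated second equation by $\partial_i u$, subtract appropriately, sum over $i$, and integrate over $\Tt^d$. Equivalently, we apply $\Delta$ to the first equation, multiply by $m$, and subtract $\Delta$ of the second equation times $u$. With this pairing the transport term $\div(mDu)$ and the Hamiltonian term $\tfrac12|Du|^2$ combine through the identity
\[
\int m\,\Delta\big(\tfrac12|Du|^2\big)-\int \Delta u\,\div(mDu)=\int m|D^2u|^2 .
\]
This is the Bochner-type identity $\Delta\tfrac12|Du|^2=|D^2u|^2+Du\cdot D\Delta u$, followed by integration by parts. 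The coupling term gives
\[
\int D(\log m)\cdot Dm=\int |Dm|^2/m .
\]
The penalization produces $\int \beta_\sigma'(m)|Dm|^2\ge0$. The terms $\sigma(m+\Delta^{2k}m)$ and $\sigma(u+\Delta^{2k}u)$ produce, after $k$ integrations by parts, the nonnegative quantities $\sigma\int(|Dm|^2+|\Delta^kDm|^2+|Du|^2+|\Delta^kDu|^2)$. The zero-order terms $-u$ and $m$ contribute $\pm\int Du\cdot Dm$, which cancel in the pairing.

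The remaining work is bounding the error terms coming from the variable coefficients $a$, $b$, $V$. These are the terms $\int m\Delta V$, the commutators from $\partial_i\div(aDu)$ and $\partial_i\div(aDm)$, and the drift terms $b\cdot Du$ and $\div(mb)$. The diffusion terms are handled as follows.
\begin{itemize}
\item The principal parts pair as $\int a\,\Delta u\,\Delta m$-type expressions and cancel between the two equations, exactly as in the first-order estimate of Lemma~\ref{Lemma:FOE}.
\item The commutator terms are of the form $\int \partial_i a\, D\partial_i u\cdot Dm$ and $\int \partial_i a\, D\partial_i m\cdot Du$. We integrate these by parts once more so that each is bounded by $\|Da\|_{L^\infty}$ times $\int m|D^2u|^2$ or $\int|Dm|^2/m$, plus lower-order quantities. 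Concretely, for the first one,
\[
\big|\textstyle\int \partial_ia\,D\partial_iu\cdot Dm\big|\le \|Da\|_\infty\big(\tfrac12\int m|D^2u|^2+\tfrac12\int |Dm|^2/m\big).
\]
\end{itemize}
This is where the hypothesis $\|Da\|_{L^\infty}<1$ enters: it allows these terms to be absorbed into the left-hand side. We expect the absorption to leave the coefficient $\tfrac12$ in front of $\int|Dm|^2/m$ that appears in the statement, and we take that coefficient as a consistency check.

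The drift and potential terms are lower order. Terms such as $\int m\,Db\,D^2u$ are bounded by $\delta\int m|D^2u|^2+C_\delta\int m$. Terms such as $\int Db\,Dm\cdot Du$ are bounded by $\delta\int|Dm|^2/m+C\int m|Du|^2$. The term $\int m\Delta V$ is bounded by $C\int m$. Every resulting right-hand side is controlled by Lemma~\ref{Lemma:FOE}: it bounds $\int m|Du|^2$ and $\int|Du|^2$ through \eqref{eq:1storder1}, and $\int m$ through \eqref{eq:1storder2} and \eqref{entropy}. The $\sigma\int u$-type terms are handled with \eqref{eq:1storder1}.

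The main obstacle is the bookkeeping of the variable-diffusion commutators and checking that the absorbable constants really sum to less than the coercive coefficients under $\|Da\|_\infty<1$. We would first try the symmetric pairing $\partial_i(\text{first})\cdot\partial_i m+\partial_i(\text{second})\cdot\partial_i u$, with signs chosen so that $\int\partial_i u\,\partial_i m$ cancels. All manipulations are legitimate because $m>0$ and $m$ is smooth, so $\log m$ is smooth, and all quantities are classical solutions on $\Tt^d$.
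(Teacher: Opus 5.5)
Your strategy is the paper's. The paper tests $\partial_{x_jx_j}$ of the first equation with $m$, tests the second equation with $u_{x_jx_j}$, and adds. After one integration by parts this is your first-derivative pairing up to an overall sign. Your commutator bookkeeping also reproduces the paper's term $2\int_{\Tt^d}(Da)^TD^2u\,Dm$, bounded by $\|Da\|_{L^\infty}\big(\int m|D^2u|^2+\int |Dm|^2/m\big)$. For this you must integrate $\int\partial_i a\,D\partial_i m\cdot Du$ by parts in the index $i$ carried by $\partial_i a$. Moving the other derivative instead produces $\int (Da\cdot Dm)\,\Delta u$, and since $|\Delta u|\le\sqrt d\,|D^2u|$, the straightforward estimate then needs $\|Da\|_{L^\infty}<2/(1+\sqrt d)$ rather than $<1$.

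The sign in your ``equivalent'' formulation is wrong, and that version would fail. Since $\int\Delta u\,\div(mDu)=-\int m\,Du\cdot D\Delta u$, the correct identity is
\[
\int m\,\Delta\big(\tfrac12|Du|^2\big)+\int\Delta u\,\div(mDu)=\int m|D^2u|^2 .
\]
The left-hand side of your displayed identity instead equals $\int m|D^2u|^2+2\int m\,Du\cdot D\Delta u$. Subtracting the equations as you describe has four consequences:
\begin{enumerate}
\item the third-order term $2\int m\,Du\cdot D\Delta u$ survives;
\item the principal products $2\int a\,\Delta u\,\Delta m$ survive;
\item the $\int Du\cdot Dm$ terms double instead of cancelling;
\item the $\sigma$-terms of the two equations acquire opposite signs.
\end{enumerate}
The equations must be added. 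This is exactly what your closing prescription (signs chosen so that $\int\partial_iu\,\partial_im$ cancels) produces.

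The drift is not lower order term by term. In the correct pairing, $-\partial_i(b\cdot Du)$ tested with $\partial_im$ gives $-\int b_j\,\partial_{ij}u\,\partial_im$, and $-\partial_i\div(mb)$ tested with $\partial_iu$ gives $\int(b\cdot Dm)\,\Delta u$. Both are of the same critical type $\int|D^2u|\,|Dm|$ as the $Da$ commutator. Nothing makes $b$ small, so Young's inequality cannot absorb them. They cancel after two integrations by parts, by the adjointness of $-b\cdot Du$ and $-\div(mb)$. In the paper this is the cancellation of $-\int m\,b\cdot Du_{x_jx_j}$ in \eqref{E8} against $+\int m\,b\cdot Du_{x_jx_j}$ in \eqref{E8a}. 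What remains are terms of the types $\int m\,Db\,D^2u$ and $\int Db\,Dm\cdot Du$; in \eqref{E9} they appear as $\int m\,\Delta b\cdot Du$ and $2\int m\,Db:D^2u$. You then bound these correctly via Lemma~\ref{Lemma:FOE}.

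Minor points:
\begin{enumerate}
\item No $\sigma\int u$ terms arise at this order.
\item The coefficient $\tfrac12$ is not a consistency check. Absorption gives coefficients such as $\tfrac{3\tau}{4}$ and $\tfrac{\tau}{2}$ with $\tau=1-\|Da\|_{L^\infty}$, and any positive coefficient is absorbed into $C$.
\item Your bound $|\int m\,\Delta V|\le C\int m$ is a harmless simplification of the paper's treatment of $\int DV\cdot Dm$.
\end{enumerate}
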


\begin{proof}
For convenience, we drop the dependence on $\sigma$ and write $(m,u)$ instead of $(\mv,\uv)$.
    Differentiating the first equation in \eqref{Rmfg} twice with respect to $x_j$, multiplying by $m$, and integrating by parts, yields
\begin{align}
    & \int_{\Tt^d} u_{x_j} m_{x_j}\,\dx - 
    \int_{\Tt^d} \big( a_{x_jx_j} Du + 2 a_{x_j} D u_{x_j} +  a D u_{x_j x_j} \big)\cdot  Dm \,\dx \nonumber\\
    &\quad - \int_{\Tt^d} m \big(  b_{x_jx_j} \cdot Du +2 b_{x_j} \cdot Du_{x_j} + b\cdot D u_{x_jx_j}\big)\,\dx - \int_{\Tt^d} m D u_{x_j x_j} \cdot D u \,\dx\nonumber \\
    &\quad - \int_{\Tt^d} m|D u_{x_j}|^2 \,\dx + \int_{\Tt^d} V_{x_j} m_{x_j}  \,\dx - \sigma  \int_{\Tt^d} \big(|m_{x_j}|^2 +|\Delta^k m_{x_j}|^2\big)  \,\dx \nonumber\\
    & \quad - \int_{\Tt^d} \beta_\sigma'(m) m_{x_j}^2  \,\dx = \int_{\Tt^d} \frac{|m_{x_j}|^2}{m}  \,\dx.\label{E8}
\end{align}
On the other hand,  multiplying the second equation in \eqref{Rmfg} by $u_{x_jx_j}$ and integrating by parts, we conclude that 
\begin{align}
    & -\int_{\Tt^d}u_{x_j}m_{x_j}\,\dx + \int_{\Tt^d}a Du_{x_jx_j}\cdot Dm\,\dx + \int_{\Tt^d} m b \cdot Du_{x_jx_j}\,\dx + \int_{\Tt^d} m Du \cdot Du_{x_jx_j}\,\dx \nonumber\\
    &\quad -\sigma  \int_{\Tt^d} \big(|u_{x_j}|^2 +|\Delta^k u_{x_j}|^2\big)  \,\dx   = 0.\label{E8a}
\end{align}
Adding \eqref{E8}  and \eqref{E8a} first, and then summing over $j$, it follows that
\begin{align}
    & \int_{\Tt^d} \Delta a D u\cdot  Dm  \,\dx + 2 \int_{\Tt^d} (Da)^T D^2u Dm \,\dx + \int_{\Tt^d} m\Delta b  \cdot  Du  \,\dx  + 2\int_{\Tt^d} m Db :    D^2u  \,\dx \nonumber\\
    &\quad + \int_{\Tt^d} m|D^2 u|^2   \,\dx - \int_{\Tt^d} DV \cdot Dm  \,\dx + \sigma \int_{\Tt^d}\big(|Du|^2+|\Delta^kDu|^2+ |Dm|^2  + |\Delta^k Dm|^2 \big) \,\dx \nonumber\\
    &\quad + \int_{\Tt^d} \beta_\sigma'(m) |Dm|^2 \,\dx + \int_{\Tt^d} \frac{|Dm|^2}{m}  \,\dx = 0.\label{E9}
\end{align} 
Next, we apply Young's inequality to control the terms with  indefinite sign to absorb them into the positive terms, bounding the remainder by a constant independent of $\sigma$. We start by observing that \eqref{entropy} and \eqref{eq:1storder2} yield
\begin{align}\label{eq:estm}
    \int_{\Tt^d} m \,\dx \leq C.
\end{align}
Fix $0<\delta<1$. We have that
\begin{align*}
    \int_{\Tt^d} DV\cdot Dm  \,\dx &= \int_{\Tt^d} (\delta^{-\frac12} m^\frac12 DV)\cdot(\delta^{\frac12} m^{-\frac12} Dm)  \,\dx\\
    &\leq  \frac{\Vert DV\Vert^2_{L^\infty{(\Tt^d)}}}{2\delta} \int_{\Tt^d} m \,\dx + \frac{\delta}{2} \int_{\Tt^d}\frac{|Dm|^2}{m} \,\dx \leq \frac{C}{\delta} + \frac{\delta}{2} \int_{\Tt^d}\frac{|Dm|^2}{m} \,\dx.
\end{align*}

Similarly, recalling \eqref{eq:1storder1},
\begin{align*}
    -\int_{\Tt^d} \Delta a D u\cdot  Dm  \,\dx & = -\int_{\Tt^d} \Delta a(\delta^{-\frac12} m^\frac12 Du)\cdot(\delta^{\frac12} m^{-\frac12} Dm)  \,\dx\\
    &\leq  \frac{\Vert D^2a\Vert^2_{L^\infty{(\Tt^d)}}}{2\delta} \int_{\Tt^d} m |Du|^2 \,\dx + \frac{\delta}{2} \int_{\Tt^d}\frac{|Dm|^2}{m} \,\dx\\
    &\leq \frac{C}{\delta} + \frac{\delta}{2} \int_{\Tt^d}\frac{|Dm|^2}{m} \,\dx,
\end{align*}

\begin{align*}
    -2\int_{\Tt^d} m Db :    D^2u  \,\dx & = -\int_{\Tt^d} (2\delta^{-\frac12} m^\frac12 Db):(\delta^{\frac12} m^{\frac12} D^2u)  \,\dx\\
    &\leq  \frac{2\Vert Db\Vert^2_{L^\infty{(\Tt^d)}}}{\delta} \int_{\Tt^d} m \,\dx + \frac{\delta}{2} \int_{\Tt^d}m |D^2u|^2 \,\dx\\
    &\leq \frac{C}{\delta} + \frac{\delta}{2} \int_{\Tt^d}m |D^2u|^2 \,\dx
\end{align*}
and
\begin{align*}
    -\int_{\Tt^d} m \Delta b \cdot    Du  \,\dx & = -\int_{\Tt^d} ( m^\frac12 \Delta b) \cdot( m^{\frac12} Du)  \,\dx\\
    &\leq  \frac{\Vert \Delta b\Vert^2_{L^\infty{(\Tt^d)}}}{2} \int_{\Tt^d} m \,\dx + \frac{1}{2} \int_{\Tt^d}m |Du|^2 \,\dx \leq C.
\end{align*}

Finally, 
\begin{align*}
    \bigg| 2 \int_{\Tt^d} (Da)^T D^2u Dm \,\dx \bigg| & \leq  2\int_{\Tt^d} \big( \Vert Da\Vert^\frac12_{L^\infty(\Tt^d)} m^\frac12 |D^2u| \big ) \big( \Vert Da\Vert^\frac12_{L^\infty(\Tt^d)} m^{-\frac12} |Dm| \big)  \,\dx\\
    &\leq  \Vert Da\Vert_{L^\infty(\Tt^d)} \int_{\Tt^d} m |D^2 u|^2 \,\dx + \Vert Da\Vert_{L^\infty(\Tt^d)} \int_{\Tt^d}\frac{|Dm|^2}{m} \,\dx.
\end{align*}

Let $\tau=1- \Vert Da\Vert_{L^\infty(\Tt^d)}$. By our assumption on $Da$, we have $\tau>0$. Then, taking $\delta= \frac\tau2$ in the estimates above, we conclude from \eqref{E9} that
\begin{align*}
   &\frac{3\tau}{4}\int_{\Tt^d}m |D^2u|^2 \,\dx + \sigma \int_{\Tt^d}\big(|Du|^2+|\Delta^kDu|^2+ |Dm|^2  + |\Delta^k Dm|^2 \big) \,\dx \\
   &\quad+ \int_{\Tt^d} \beta_\sigma'(m) |Dm|^2 \,\dx + \frac\tau2\int_{\Tt^d} \frac{|Dm|^2}{m}  \,\dx \leq C + \frac{C}{\tau}, 
\end{align*}
from which the conclusion follows.\end{proof}
This is precisely the estimate that replaces Assumption~\ref{2.12} in the quadratic regime.
In the following corollary, we collect the uniform bounds on $(\mv,\uv)$ required for the existence result in Theorem~\ref{Th:IR}.

\begin{cor}\label{cor1} 
Assume that $b\in C^\infty(\Tt^d;\Rr^d)$, $V\in C^\infty(\Tt^d)$, and $a\in C^\infty(\Tt^d)$ is such that $\min_{\Tt^d} a >0$ and  $\|Da\|_{L^\infty(\Tt^d)}<1$. 
For $\sigma\in(0,1)$, let $(\mv,\uv)\in C^\infty(\Tt^d;\Rr^+)\times C^\infty(\Tt^d)$ be a classical solution to the regularized system~\eqref{Rmfg} on $\Tt^d$.  
Then, there exists a positive constant, $C$, depending only on the problem data and independent of $\sigma$, such that 
\begin{equation}\label{cor1ES.}
    \|(\mv)^\frac{1}{2}\|_{W^{1,2}(\Tt^d)}+\|\uv\|_{W^{1,2}(\Tt^d)}\leq  C.
\end{equation}
\end{cor}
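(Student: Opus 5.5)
The plan is to read off each piece of \eqref{cor1ES.} from Lemma~\ref{Lemma:FOE} and Lemma~\ref{Lemma:SOE}. Only one piece needs real work: the $L^2$ control of $\uv$ itself, which cannot be taken from the $\sigma$-weighted terms.

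\emph{Bound on $(\mv)^{1/2}$.} We have $\|(\mv)^{1/2}\|_{L^2}^2=\int_{\Tt^d}\mv\,\dx$. This is bounded uniformly by \eqref{eq:estm}, i.e.\ by \eqref{entropy} together with \eqref{eq:1storder2}. For the gradient, $|D(\mv)^{1/2}|^2=\tfrac14|D\mv|^2/\mv$ pointwise, since $\mv>0$ is smooth. The term $\int_{\Tt^d}|D\mv|^2/\mv\,\dx$ is bounded by Lemma~\ref{Lemma:SOE}, because every other term on its left-hand side is nonnegative ($\beta_\sigma'\ge0$). This gives $\|(\mv)^{1/2}\|_{W^{1,2}}\le C$.

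\emph{Bound on $D\uv$.} Lemma~\ref{Lemma:FOE}(a) contains $\tfrac12\int_{\Tt^d}|D\uv|^2(\mv+1)\,\dx\le C$. The remaining terms there are nonnegative: $-\beta_\sigma\ge0$, and $\beta_\sigma(s)(s-\sigma)\ge0$ because $\beta_\sigma=0$ for $s>\sigma$. Hence $\|D\uv\|_{L^2}\le C$.

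\emph{Bound on $\uv$ in $L^2$ (main obstacle).} The only direct $L^2$ bound is $\sigma\int\uv^2\le C$, which degenerates as $\sigma\to0$. By Poincar\'e--Wirtinger it suffices to bound the mean $\int_{\Tt^d}\uv\,\dx$ uniformly.

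\textbf{Upper bound on the mean.} Integrate the second equation of \eqref{Rmfg} over $\Tt^d$. Divergence terms and $\int\Delta^{2k}$ vanish, so
\[
\int\mv\,\dx+\sigma\int\uv\,\dx=1.
\]
Thus $\sigma\int\uv=1-\int\mv\le1$. This gives only $\int\uv\le 1/\sigma$, so a better route is needed.

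\textbf{Route via the first equation.} Integrate the first equation of \eqref{Rmfg}. Using $\int\div(aD\uv)=0$ and $\int\Delta^{2k}\mv=0$,
\[
\int\uv=-\int b\cdot D\uv-\tfrac12\int|D\uv|^2-\int V+\sigma\int\mv+\int\beta_\sigma(\mv)+\int\log\mv.
\]
Each term on the right is controlled:
\begin{itemize}
\item $\int b\cdot D\uv$ and $\tfrac12\int|D\uv|^2$ are bounded by the gradient bound.
\item $\sigma\int\mv$ is bounded by \eqref{eq:estm}.
\item $\int\beta_\sigma(\mv)\in[-C,0]$ by Lemma~\ref{Lemma:FOE}(a).
\item $\int\log\mv$ is bounded above by \eqref{entropy} and \eqref{eq:1storder2}, and below by Lemma~\ref{Lemma:FOE}(c).
\end{itemize}
Hence $|\int\uv|\le C$ uniformly in $\sigma$. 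Combining this with $\|D\uv\|_{L^2}\le C$ and Poincar\'e--Wirtinger yields $\|\uv\|_{W^{1,2}}\le C$, completing \eqref{cor1ES.}.
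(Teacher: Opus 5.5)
Your proposal is correct and follows the paper's proof essentially step for step. You bound $\|(\mv)^{1/2}\|_{W^{1,2}}$ via \eqref{eq:estm} and Lemma~\ref{Lemma:SOE}, bound $\|D\uv\|_{L^2}$ via Lemma~\ref{Lemma:FOE}, control the mean of $\uv$ by integrating the first equation of \eqref{Rmfg} and estimating each term with \eqref{entropy} and Lemma~\ref{Lemma:FOE}, and then conclude with Poincar\'e--Wirtinger; your term-by-term bookkeeping spells out what the paper leaves implicit, and the preliminary detour through the second equation is harmless but unnecessary.
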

\begin{proof}
The bounds in Lemma~\ref{Lemma:SOE} yield
\[
\|D((\mv)^\frac{1}{2})\|_{L^2(\Tt^d)}^2=\frac14 \int_{\Tt^d}\frac{|D\mv|^2}{\mv} \,\dx \leq  C.
\]
On the other hand, as justified in \eqref{eq:estm}, we have $\int_{\Tt^d}\mv \,\dx \leq  C$; hence,
$\|(\mv)^\frac{1}{2}\|_{L^2(\Tt^d)}^2=\int_{\Tt^d}\mv \,\dx \leq  C$. Therefore,
$$\|(\mv)^\frac{1}{2}\|_{W^{1,2}(\Tt^d)}\leq  C.$$

To prove the uniform bound for $u$,  we observe that Lemma~\ref{Lemma:FOE} yields $\|D\uv\|_{L^2(\Tt^d)}\leq  C$. Moreover,
integrating the first equation in \eqref{Rmfg} over $\Tt^d$ and using periodicity together with \eqref{entropy}
 and Lemma~\ref{Lemma:FOE}, we conclude that
\[
\Big|\int_{\Tt^d}\uv \,\dx\Big|\leq  C.
\]
By the Poincaré--Wirtinger inequality on the torus,
\[
\|\uv\|_{L^2(\Tt^d)}\leq  C\big(\|D\uv\|_{L^2(\Tt^d)}+|\bar u_\sigma|\big)\leq  C,
\]
where \(\bar u_\sigma = \frac{1}{|\Tt^d|} \int_{\Tt^d} \uv \,\dx\). Hence, $\|\uv\|_{W^{1,2}(\Tt^d)}\leq  C$.
\end{proof}

We now present our main existence result for the stationary quadratic regime. We establish the existence of weak solutions to Problem~\ref{Qproblem-log} in the sense of Definition~\ref{def:weaksol} with integrability exponents $r = \frac{2^*}{2}$ and $\gamma = 2$. This result represents a significant improvement over the general existence framework established in \cite{FG2}, which typically guarantees weak solutions only in  $L^1(\Tt^d) \times W^{1,2}(\Tt^d)$. By obtaining higher integrability for the density $\tilde{m}$, we provide the necessary regularity to apply the linearization argument required for uniqueness.
\begin{theorem}\label{Th:IR}
   Under the assumptions of Problem~\ref{Qproblem-log}, assume further that    $\|Da\|_{L^\infty(\Tt^d)}<1$.
   Then, there exists a weak solution $(\tilde m,\tilde u)$ to Problem~\ref{Qproblem-log} in the sense of Definition~\ref{def:weaksol} such that
\[
\tilde u\in W^{1,2}(\Tt^d),\qquad \tilde m^{1/2}\in W^{1,2}(\Tt^d).
\]
Moreover,
\[
\tilde m\in L^{\frac{2^*}{2}}(\Tt^d)\quad\text{if } d>2,
\]
while
\[
\tilde m\in L^q(\Tt^d)\quad\text{for every finite } q \quad\text{if } d\le2.
\]
\end{theorem}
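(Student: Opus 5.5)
The plan is to pass to the limit $\sigma\to0$ in the regularized system \eqref{Rmfg}, using the uniform bounds of Corollary~\ref{cor1} for compactness and Minty's method to recover the weak inequality \eqref{weaksol}. For each $\sigma\in(0,1)$ we take a classical solution $(\mv,\uv)$ of \eqref{Rmfg}, which exists by \cite[Proposition~3.1]{FG2}. By Corollary~\ref{cor1}, $(\mv)^{1/2}$ and $\uv$ are bounded in $W^{1,2}(\Tt^d)$ uniformly in $\sigma$. Along a subsequence we then obtain $\uv\rightharpoonup\tiu$ weakly in $W^{1,2}$ and strongly in $L^2$, and $(\mv)^{1/2}\rightharpoonup w$ weakly in $W^{1,2}$ and strongly in $L^2$. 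Set $\tim=w^2\ge0$. Sobolev embedding, $W^{1,2}\hookrightarrow L^{2^*}$ for $d>2$ and into every $L^q$ for $d\le2$, gives a uniform bound on $\mv$ in $L^{2^*/2}$ (respectively in every $L^q$). Hence $\tim$ has the stated integrability and $\tim^{1/2}\in W^{1,2}$. By Rellich and interpolation, $\mv\to\tim$ strongly in $L^p$ for every $p<2^*/2$, and weakly in $L^{2^*/2}$.

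Next we verify \eqref{weaksol} via Minty's trick. The key property is that $A_\sigma$ in \eqref{Rmfg2} is monotone. The $\sigma$-terms $\sigma(\eta+\Delta^{2k}\eta)$ and $\sigma(v+\Delta^{2k}v)$ contribute $\sigma\int(|\eta_1-\eta_2|^2+|\Delta^k(\eta_1-\eta_2)|^2+\dots)\ge0$. The functions $\beta_\sigma$ and $\log$ are nondecreasing. The remaining part is the operator $A$ of Problem~\ref{Qproblem-log}, which is monotone by the standard Lasry--Lions computation for the quadratic Hamiltonian; the diffusion, drift and $-v$, $\eta$ cross terms cancel in the pairing. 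Fix a test pair $(\eta,v)\in C^\infty(\Tt^d;\Rr^+)\times C^\infty(\Tt^d)$. Since $A_\sigma[\mv,\uv]=0$, monotonicity gives
\[
\left\langle \begin{bmatrix}\eta\\ v\end{bmatrix}-\begin{bmatrix}\mv\\ \uv\end{bmatrix}, A_\sigma\begin{bmatrix}\eta\\ v\end{bmatrix}\right\rangle\ge0 .
\]
We expand $A_\sigma[\eta,v]=A[\eta,v]+\sigma(\eta+\Delta^{2k}\eta,\,v+\Delta^{2k}v)+(\beta_\sigma(\eta),0)$. Since $\eta$ is smooth with $\min\eta>0$, we have $\beta_\sigma(\eta)\equiv0$ once $\sigma<\min\eta$. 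The $\sigma$-terms are $\sigma$ times smooth functions paired against $(\eta-\mv,v-\uv)$, which are bounded in $L^1\times L^2$, so they vanish as $\sigma\to0$.

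It remains to pass to the limit in $\langle(\eta,v)-(\mv,\uv),A[\eta,v]\rangle$. This expression is linear in $(\mv,\uv)$, and $A[\eta,v]$ is a fixed smooth function. Weak $L^1$ convergence of $\mv$ (from the strong $L^p$ convergence) and weak $L^2$ convergence of $\uv$ therefore suffice to obtain \eqref{weaksol} for $(\tim,\tiu)$ with $r=2^*/2$ (or any finite $r$ if $d\le2$) and $\gamma=2$. Because the pairing is linear in the solution, this limit step needs neither convergence of $\log\mv$ nor of $|D\uv|^2$, which is the whole point of the Minty formulation.

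I expect the main obstacle to be the careful verification that $A_\sigma$ is monotone, in particular the handling of the drift terms. In the first component, $-b\cdot Dv$ paired with $\eta_1-\eta_2$ must cancel against $-\div(\eta b)$ paired with $v_1-v_2$. In the same way, the quadratic term $-\tfrac12|Dv|^2$ and $-\div(\eta Dv)$ must combine into $\tfrac12\int(\eta_1+\eta_2)|Dv_1-Dv_2|^2\ge0$. All of the real analytic work, namely the uniform bound that yields the improved integrability, was already done in Lemma~\ref{Lemma:SOE}. That lemma is where the hypothesis $\|Da\|_{L^\infty}<1$ enters.
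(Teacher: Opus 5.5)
Your proposal is correct and follows essentially the same route as the paper. Both use the uniform bounds of Corollary~\ref{cor1}, extract a limit with $\tilde m=w^2$ via compact Sobolev embedding, and apply Minty's method to $\langle(\eta,v)-(m_\sigma,u_\sigma),A_\sigma(\eta,v)\rangle\ge0$ before letting $\sigma\to0$. Your explicit check that $A_\sigma$ is monotone, and your remark that $\beta_\sigma(\eta)\equiv0$ once $\sigma<\min\eta$, only spell out details the paper asserts without proof.
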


\begin{proof}
The proof proceeds in two steps as follows: in Step 1, we extract a convergent subsequence from the regularized approximations using the uniform estimates established in Corollary~\ref{cor1};  subsequently, in Step 2, we identify the limit as a weak solution of Problem~\ref{Qproblem-log} via the Minty monotonicity device. 

Let $(\mv,\uv)\in C^\infty(\Tt^d)\times C^\infty(\Tt^d)$ be a classical solution to the regularized system~\eqref{Rmfg}. 

\medskip
\noindent\textbf{Step 1:} We establish the compactness of the regularized sequence $(\mv, \uv)$, and extract a limit pair $(\tim,\tiu) \in L^\frac{2^*}{2}(\Tt^d)\times W^{1,2}(\Tt^d)$ satisfying the mass constraint $\int_{\Tt^d} \tim\,\dx=1$.

By Corollary~\ref{cor1}, \((\mv)^{1/2}\) is bounded in \(W^{1,2}(\Tt^d)\). Hence, up to a subsequence (not relabeled),
\[
(\mv)^\frac{1}{2}\rightharpoonup \omega \quad \text{weakly in } W^{1,2}(\Tt^d)
\]
for some $\omega\in W^{1,2}(\Tt^d)$. Using the Sobolev embedding theorem, we further have that
\[
(\mv)^\frac{1}{2}\rightarrow \omega \quad \text{strongly in } L^q(\Tt^d)
\]
for any $q < 2^*$ if $d > 2$, or $q < \infty$ if $d \leq 2$. In particular, by the compact embedding from $W^{1,2}(\Tt^d)$ to $L^2(\Tt^d)$, we obtain strong convergence in $L^2(\Tt^d)$, which implies that $\mv \to \omega^2$ strongly in $L^1(\Tt^d)$. Consequently, defining $\tim = \omega^2$ yields  $\tim^{1/2} = \omega \in W^{1,2}(\Tt^d)$ and $\tim \in L^{\frac{2^*}{2}}(\Tt^d)$.

Similarly, we conclude that there exists $\tiu\in W^{1,2}(\Tt^d)$ such that $\uv \rightharpoonup \tiu$ weakly in $W^{1,2}(\Tt^d)$. Furthermore, once we integrate the second equation in \eqref{Rmfg} and take $\sigma \rightarrow0^+$, we obtain that $\int_{\Tt^d} \tim\,\dx=1$.
    
\medskip
\noindent\textbf{Step 2:} We prove that $(\tim,\tiu)$ is a weak solution of Problem~\ref{Qproblem-log}. To that end, we use the Minty--Browder method. Let $(\eta,v)\in C^\infty(\Tt^{d};\Rr^+)\times C^\infty(\Tt^{d})$. Then, because $A_\sigma \begin{bmatrix}\mv\\ \uv\end{bmatrix}=0$ and $A_\sigma$ is monotone, we have 
   \begin{align*}     
   0&\leq  \Big\langle 
    \begin{bmatrix}\eta\\ v\end{bmatrix}
    -
    \begin{bmatrix}\mv\\ \uv\end{bmatrix},
    A_\sigma\begin{bmatrix}\eta\\ v\end{bmatrix}
    \Big\rangle\\
    &=
    \int_{\Tt^{d}}(\eta-\mv) \big(-v+\div(a(x)Dv)-b(x)\cdot Dv -\frac12|Dv|^{2}-V(x)+\log\eta\big) \,\dx\\
    &\quad+\int_{\Tt^{d}}(\eta-\mv)\big( \sigma (\eta+\Delta^{2k}\eta) + \beta_\sigma(\eta)\big)+(v-\uv)  \big(\sigma (v+\Delta^{2k}v)\big) \,\dx\\
    &\quad+\int_{\Tt^{d}}(v-\uv)  \big(\eta-1-\div(a(x)D\eta)-\div (\eta b(x))-\div(\eta Dv)\big) \,\dx.
   \end{align*}
   Letting $\sigma\rightarrow0^+$,
   the convergences established above together with the convergence
    $\sigma (\eta+\Delta^{2k}\eta) + \beta_\sigma(\eta)+ \sigma (v+\Delta^{2k}v)\rightarrow 0$ uniformly on $\Tt^d$, which holds due to the regularity of $(\eta,v)$ and because  $\min_{\Tt^d} \eta>0$, yield
      \begin{align*}     
   &\int_{\Tt^{d}}(\eta-\tim) \big(-v+\div(a(x)Dv) -b(x)\cdot Dv -\frac12|Dv|^{2}-V(x)+\log\eta\big) \,\dx\\
    &\quad+\int_{\Tt^{d}}(v-\tiu)  \big(\eta-1-\div(a(x)D\eta)-\div (\eta b(x))-\div(\eta Dv)\big) \,\dx\ge 0.
   \end{align*}
    Therefore, $(\tim,\tiu)\in L^{\frac{2^*}{2}}(\Tt^d)\times W^{1,2}(\Tt^d)$ is a weak solution of Problem~\ref{Qproblem-log} in the sense of Definition~\ref{def:weaksol}.   
\end{proof}

\subsection{Weak-Strong Uniqueness}\label{sec6.2}
In this section, we adapt the monotonicity machinery to Problem~\ref{Qproblem} for which we prove the following uniqueness result. 

\begin{theorem}\label{Th:uniq-Q}
    Assume that   $(\tilde m,\tilde u)\in L^{2}(\Tt^{d})\times W^{1,2}(\Tt^{d})$ is a  weak solution to Problem~\ref{Qproblem} (in the sense of Definition~\ref{def:weaksol})  and that \( (m, u) \in W^{1,\infty}(\Tt^d) \times W^{2,\infty}(\Tt^d) \) is a strong solution to Problem~\ref{Qproblem} (in the sense of Definition~\ref{def:strongsol-S}). Assume further that there exists a constant $c_0>0$ such that \(
  m(x)\ge c_0\) for all  $x\in\Tt^d$, and that $D_m F(x,s)>0$ for all $x\in\Tt^d$ and all $s\in\mathbb{E}$.
Then, \[m=\tim \quad \text{and}\quad u=\tiu.\]
\end{theorem}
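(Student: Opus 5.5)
We follow the three-step scheme of the proof of Theorem~\ref{Th:uniq-stat}, with $H(x,p,m)=\tfrac{|p|^2}{2}-Da(x)\cdot p+b(x)\cdot p-F(x,m)$. In this setting the general integrability hypotheses (Assumptions~\ref{2.4}, \ref{2.10}, \ref{2.12}) are not available. We replace them by direct estimates that use $(m,u)\in W^{1,\infty}\times W^{2,\infty}$ and $(\tim,\tiu)\in L^2\times W^{1,2}$.

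\textbf{Step 1: testing both formulations.} For smooth $(\eta,v)$ with $\eta>0$, we integrate the weak inequality \eqref{weaksol} by parts. This gives the analogue of \eqref{E4}, where the transport term is $(Dv-D\tiu)\cdot(aD\eta+\eta b+\eta Dv)$. We then test the strong equations \eqref{QMFG} against $(\eta-\tim,v-\tiu)$ to obtain the analogue of \eqref{E5}. This pairing is admissible because the Hamilton--Jacobi equation holds a.e. with right-hand side in $L^\infty$ (since $u\in W^{2,\infty}$ and $m\ge c_0$ is bounded, so $F(\cdot,m)\in L^\infty$), which pairs with $\tim\in L^2$. Likewise $mDu,\ aDm,\ mb\in L^\infty$ pair with $D\tiu\in L^2$.

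\textbf{Step 2: extension and linearization.} We extend both relations by density to pairs $(\eta,v)=(m+\varepsilon\bar\eta,\,u+\varepsilon\bar v)$ with $\bar\eta,\bar v\in C^2$ and $|\varepsilon|$ small, so that $\eta\ge c_0/2$. We approximate $m$ by mollification $m_k$, which converges in $W^{1,q}$ for every $q<\infty$ and uniformly, and $u$ by $u_k\to u$ in $W^{2,q}$ for every $q<\infty$ and in $C^1$. Every nonlinear term then converges in $L^2$: $F(\cdot,\eta_k)$ converges uniformly because $F$ is smooth and $\eta_k$ is bounded and bounded below, $|Dv_k|^2\to|Dv|^2$ uniformly, and $\eta_kDv_k\to\eta Dv$ uniformly. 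Here $L^2$ convergence is exactly what is needed against $\tim\in L^2$ and $D\tiu\in L^2$. Subtracting the two relations gives $\Ii(\varepsilon)\ge0$ with $\Ii(0)=0$. Since $\Ii$ is a polynomial in $\varepsilon$ apart from the smooth term $F(x,m+\varepsilon\bar\eta)$, differentiation under the integral is immediate and yields $\Ii'(0)=0$. This is the analogue of \eqref{E6}, with $D_pH=Du-Da+b$, $D^2_{pp}H=I$, $D^2_{pm}H=0$, $D_mH=-D_mF$.

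\textbf{Step 3: mollified test pair and conclusion.} We substitute the coefficient-adapted pair \eqref{eq:molletaepsi}--\eqref{eq:mollvepsi} into $\Ii'(0)=0$. The cross diffusion terms must cancel for every $\delta$, as in the proof of Theorem~\ref{Th:uniq-stat}. The divergence form $\div(aDu)$ produces the term $Da\cdot D\bar v$, which is absorbed into $D_pH$. We then check that the $a$-weighted double mollification still yields the exact antisymmetric cancellation. This uses $\int(m-\tim)\div(aD v_\delta)=-\int D(\theta_\delta*(a(m-\tim)))\cdot D(\theta_\delta*(u-\tiu))$ after moving $a$ appropriately. If necessary, we instead use $\eta_\delta=\theta_\delta*\theta_\delta*(m-\tim)$ and rewrite the Fokker--Planck term as $\div(aDm)$ in order to match.

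Letting $\delta\to0$ requires only the following: $\eta_\delta\to m-\tim$ in $L^2$ and $v_\delta\to u-\tiu$ in $W^{1,2}$, tested against coefficients $D_mF(\cdot,m)$, $Du$, $m$, which lie in $L^\infty$, and against $\tim\in L^2$, $D\tiu\in L^2$. The limit is
\[
\int_{\Tt^d}\big(D_mF(x,m)(m-\tim)^2+m|Du-D\tiu|^2\big)\,\dx=0 .
\]
Here the first-order terms cancel pairwise exactly as in the general case, and the $L^2$ hypothesis on $\tim$ is used precisely to control $\int(m-\tim)^2$.

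Since $D_mF>0$ and $m\ge c_0$, we obtain $m=\tim$ and $Du=D\tiu$. Returning to $\Ii'(0)=0$ then gives $\int(u-\tiu)\bar\eta=0$ for all $\bar\eta$, so $u=\tiu$.

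The main obstacle is the passage to the limit in Step 3 with $\tim$ only in $L^2$. The product terms $(m-\tim)D_pH\cdot Dv_\delta$ and $(Du-D\tiu)\cdot Du\,\eta_\delta$ do converge, thanks to the $L^\infty$ coefficients, but the identity must be verified as an exact cancellation and not merely up to error terms.
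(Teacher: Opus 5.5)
Your proposal is correct and follows the paper's proof essentially step for step. You extend the weak and strong relations by mollification to $W^{1,\infty}\times W^{2,\infty}$ pairs with $\essinf\eta>0$, linearize at $(m,u)$ to get $\Ii'(0)=0$, insert the coefficient-adapted pair \eqref{eq:molletaepsi}--\eqref{eq:mollvepsi} after splitting $\div(aDv_\delta)=a\Delta v_\delta+Da\cdot Dv_\delta$ and $aD\eta_\delta=D(a\eta_\delta)-Da\,\eta_\delta$, and let $\delta\to0$.

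One small slip: the identity you display holds with $a\Delta v_\delta$ in place of $\div(aDv_\delta)$. The remainder $\int(m-\tim)Da\cdot Dv_\delta$ is exactly the piece you put into $D_pH$, and its limit cancels against that of $-\int(Du-D\tiu)\cdot Da\,\eta_\delta$. So the exact cancellation goes through with the coefficient-adapted pair, and your fallback with $\eta_\delta=\theta_\delta*\theta_\delta*(m-\tim)$ is unnecessary.
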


\begin{remark}
The uniqueness result in Theorem~\ref{Th:uniq-Q} requires weak solutions to Problem~\ref{Qproblem} to possess higher regularity than those typically prescribed by Definition~\ref{def:weaksol}. To date, the existence of solutions with this improved regularity for $\tilde{m}$ has been established in two primary settings:
\begin{itemize}
    \item \textbf{Power-type couplings:} In~\cite[Theorem~6.2]{FG2}, for couplings of the form $F(x,m) \sim m^{\alpha}$ ($\alpha > 0$) with a constant diffusion coefficient $a(x) \equiv c > 0$. In this case, $\tilde{m} \in L^{\frac{(\alpha+1)2^*}{2}}(\Tt^d)$ (as usual when $d\leq 2$, $2^*$ can be replaced by any arbitrary large exponent $q$). The required regularity $\tilde{m} \in L^2(\Tt^d)$ is satisfied whenever $\frac{(\alpha+1)2^*}{2}\geq 2$ holds. This occurs if $d \leq \frac{4}{1-\alpha}$ for $0 < \alpha < 1$, and for $d \geq 1$ for $\alpha \geq 1$.

    \item \textbf{Logarithmic couplings:} In Theorem~\ref{Th:IR} of the present work, for $F(x,m) = -V(x) + \log m$ and a non-constant diffusion coefficient $a(\cdot)$. This regularity is guaranteed provided $\|Da\|_{L^\infty(\Tt^d)} < 1$ and $d \leq 4$.
\end{itemize}
 We further observe that while the higher integrability of the density $\tilde{m}$ is a specific focus of the preceding subsection, the $W^{1,2}(\Tt^d)$ regularity for the value function $\tilde{u}$ in the context of quadratic Hamiltonians was already established in a general monotone setting in \cite{FG2}. 
\end{remark}

\begin{proof}[Proof of Theorem~\ref{Th:uniq-Q}]
The proof follows the same strategy as that of Theorem~\ref{Th:uniq-stat} in Section~\ref{sect:proofuniqstat}; therefore, we present only the key steps and expressions.

Using Definition~\ref{def:weaksol} and the regularity of the weak solution, $(\tilde m,\tilde u)\in L^{2}(\Tt^{d})\times W^{1,2}(\Tt^{d})$, a mollification argument allows us to conclude that, as in \eqref{E4}, 
\begin{align}\label{EQ:WeakQ}
    &\int_{\Tt^d} (\eta - \tilde{m})\Big(-v + \div(a(x) Dv) -b(x)\cdot Dv - \tfrac{1}{2}|Dv|^2 + F(x,\eta)\Big) \,\dx \nonumber \\
    &\quad + \int_{\Tt^d} \Big[(v - \tilde{u})(\eta-1) + (Dv - D\tilde{u}) \cdot \big(a(x)D\eta + \eta Dv + \eta b(x)\big)\Big] \,\dx \ge 0
\end{align}
holds for  all $(\eta,v)\in W^{1,\infty}(\Tt^d) \times W^{2,\infty}(\Tt^d)$ with $\essinf_{\Tt^d} \eta>0$.
On the other hand, Definition~\ref{def:strongsol-S} and the regularity imposed on the strong solution,  $(m,u) \in W^{1,\infty}(\Tt^d) \times W^{2,\infty}(\Tt^d)$, implies that, as in \ref{E5}, 
\begin{align}\label{EQ:StrongQ}
    &\int_{\Tt^d} (\eta - \tilde{m})\Big(-u + \div(a(x) Du) -b(x)\cdot Du - \tfrac{1}{2}|Du|^2 + F(x,m)\Big) \,\dx \nonumber \\
    &\quad + \int_{\Tt^d} \Big[(v - \tilde{u})(m-1) + (Dv - D\tilde{u}) \cdot \big(a(x)Dm + m Du + m b(x)\big)\Big] \,\dx = 0
\end{align}
holds for all  $(\eta,v)\in W^{1,\infty}(\Tt^d) \times W^{2,\infty}(\Tt^d)$ with $\essinf_{\Tt^d} \eta>0$.

Thus, subtracting \eqref{EQ:StrongQ} from \eqref{EQ:WeakQ}, we conclude for all $(\eta,v)\in W^{1,\infty}(\Tt^d) \times W^{2,\infty}(\Tt^d)$ with $\essinf_{\Tt^d} \eta>0$ that 
\begin{align}
    \mathcal{I}[\eta,v]\geq 0,\label{eq:stron-weak-var}
\end{align}
where
\begin{align*}
    \mathcal{I}[\eta,v]  = &\int_{\Tt^d} (\eta - \tilde{m})\big(u-v - \div(a(x) (Du-Dv)) +b(x)\cdot (Du - Dv)\\&\hspace{30mm} + \tfrac{1}{2}|Du|^2 -\tfrac{1}{2}|Dv|^2 - F(x,m) + F(x,\eta)\big) \,\dx\\
    &+ \int_{\Tt^d} \big[(v - \tilde{u})(\eta-m) + (Dv - D\tilde{u}) \cdot \big(a(x)(D\eta - Dm) + \eta Dv - m Du \\&\hspace{30mm}  + (\eta - m) b(x)\big)\big] \,\dx.
\end{align*}

Then, fixing $(\bar 
\eta, \bar v) \in C^2(\Tt^d) \times C^2(\Tt^d)$, the uniform lower bound on $m$ allows us to take 
\[ \eta = m + \varepsilon \bar \eta \quad \text{and} \quad v = u + \varepsilon \bar v\]
in \eqref{eq:stron-weak-var} for any sufficiently small $|\varepsilon|$. Thus, 
as in the proof of Theorem~\ref{Th:uniq-stat}, we must have $\Ii'(0)=0$, where $\Ii(\varepsilon) = \mathcal{I}[m + \varepsilon \bar \eta, u + \varepsilon \bar v]$. Consequently,
\begin{align}
    &\int_{\Tt^d} (m- \tilde m) \big( -\bar v + \div (a(x) D\bar v) - b(x) \cdot D\bar v -Du \cdot D\bar v + \bar \eta D_m F(x,m)\big)\,\dx\nonumber\\
    &\quad + \int_{\Tt^d} \big[(u- \tilde u) \bar \eta  + (Du - D\tilde u) \cdot \big( a(x) D\bar \eta + m D\bar v + \bar \eta Du + \bar\eta b(x)\big)\big] \,\dx = 0.\label{eq:estinepsi1}
\end{align}
To handle the higher-order diffusion terms, we proceed as Theorem~\ref{Th:uniq-stat} and take in the preceding estimate the $C^2$ approximations $\eta_{  \delta}$ and $v_{  \delta}$ of $(m - \tilde m)$ and $(u - \tilde u)$, respectively,  as in \eqref{eq:molletaepsi} and \eqref{eq:mollvepsi} to obtain 
\begin{align}
    &\int_{\Tt^d} \Big[ (m- \tilde m) \big( \div (a(x) Dv_{  \delta}) - b(x) \cdot Dv_{  \delta} \big)+ (Du - D\tilde u) \cdot \big( a(x) D\eta_{  \delta}  + \eta_{  \delta} b(x)\big)\Big] \,\dx \nonumber
    \\&\quad = \int_{\Tt^d} \Big[ (m- \tilde m) \big( Da(x) \cdot Dv_{  \delta} - b(x) \cdot Dv_{  \delta} \big)+ (Du - D\tilde u) \cdot \big(  \eta_{  \delta} b(x) - Da(x) \eta_{  \delta}\big)\Big] \,\dx,\label{eq:estinepsi2}
\end{align}
where we used the identities $\div(a(x) Dv_{  \delta}) = a(x) \Delta v_{  \delta} + Da(x) \cdot Dv_{  \delta}$ and $a(x) D\eta_{  \delta} = D(a(x) \eta_{  \delta}) - Da(x)\eta_{  \delta}$.

We conclude from \eqref{eq:estinepsi1} and \eqref{eq:estinepsi2} that
\begin{align}
    &\int_{\Tt^d} (m- \tilde m) \Big( -v_{  \delta}  -Du \cdot Dv_{  \delta} + \eta_{  \delta} D_m F(x,m)\Big)\,\dx\nonumber\\
    &\quad + \int_{\Tt^d} \Big[(u- \tilde u) \eta_{  \delta}  + (Du - D\tilde u) \cdot \big(  m Dv_{  \delta} + \eta_{  \delta} Du \big)\Big] \,\dx \nonumber\\
    &\quad + \int_{\Tt^d} \Big[ (m- \tilde m) \big( Da(x) \cdot Dv_{  \delta} - b(x) \cdot Dv_{  \delta} \big)+ (Du - D\tilde u) \cdot \big(  \eta_{  \delta} b(x) - Da(x) \eta_{  \delta}\big)\Big] \,\dx = 0.\label{eq:estinepsi3}
\end{align}
Finally, letting ${  \delta}\to0$ in the preceding estimate, and using  the convergence properties of the mollifiers (Section \ref{R8}), yields
\begin{align*}
    \int_{\Tt^d} \Big[ (m - \tilde{m})^2 D_m F(x, m) + m |Du - D\tilde{u}|^2 \Big] \,\dx = 0.
\end{align*}
Because $m \ge c_0 > 0$ and $D_m F(x,m(x)) > 0$ by assumption, we conclude that $Du = D\tilde{u}$ and $m = \tilde{m}$ a.e.. This implies that $u - \tilde{u} = C$ for some constant $C$. Substituting these results back into  \eqref{eq:estinepsi1} forces $C=0$; hence,  $u=\tilde{u}$ a.e., which concludes the proof.
\end{proof}

\subsection{Strong convergence of monotone mean-field game systems}\label{sect:quad-conv}

In this section, we establish a stability result for the regularized system \eqref{Rmfg} introduced in Section \ref{sec:quad-reg}. While the a priori estimates derived previously yield the existence of weak solutions via compactness, such arguments typically guarantee only weak convergence (up to subsequences). 
We employ a regularity trade-off argument: we show that if Problem~\ref{Qproblem-log} admits a sufficiently regular solution $(m,u)$, then the whole family of regularized solutions $(m_\sigma,u_\sigma)$ to \eqref{Rmfg} converges strongly to $(m,u)$ as $\sigma\to0$. 
We take $(m,u)\in W^{2k,2}(\Tt^d)\times W^{2k,2}(\Tt^d)$ solving Problem \ref{Qproblem-log} in $\mathcal D'(\Tt^d)$.
This approach automatically yields the weak-strong uniqueness for the weak solutions produced by the particular approximation scheme. Thus, in this logarithmic quadratic setting, the abstract weak--strong uniqueness principle is complemented by a concrete strong convergence result for the monotone regularization.

\begin{theorem}\label{Th:strong-conv}
Let $k\in\Nn$  be such that $2k-4 >\frac d2+1 $, and assume that there exists a (strong) solution $(m, u) \in W^{2k,2}(\Tt^d) \times W^{2k,2}(\Tt^d)$ to Problem~\ref{Qproblem-log}  with $\essinf_{\Tt^d}m  > 0$. For $\sigma\in(0,1)$, let
 $(m_\sigma, u_\sigma)\in C^\infty(\Tt^d)\times C^\infty(\Tt^d)$ be the  classical solution to the regularized system \eqref{Rmfg}.
    Then, as $\sigma \to 0$,
    \begin{align}
        &\mv \to m \quad\text{ strongly in } L^1(\Tt^d),\label{eq:convergencem}\\
        &\uv \to u \quad\text{ strongly in }W^{1,2}(\Tt^d).\label{eq:convergenceu}
    \end{align}
 \end{theorem}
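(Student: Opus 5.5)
The plan is to compare the regularized solution $(\mv,\uv)$ directly with the strong solution $(m,u)$ through the monotonicity of $A_\sigma$. First, regard $(m,u)$ as an approximate zero of $A_\sigma$. Since $(m,u)$ solves \eqref{QMFG-log} in $\mathcal D'(\Tt^d)$, we have
\[
A_\sigma\begin{bmatrix} m\\ u\end{bmatrix}=\begin{bmatrix}\sigma(m+\Delta^{2k}m)+\beta_\sigma(m)\\ \sigma(u+\Delta^{2k}u)\end{bmatrix}.
\]
For $\sigma<\essinf m$, the term $\beta_\sigma(m)$ vanishes identically, because $\beta_\sigma(s)=0$ for $s>\sigma$. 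The remaining terms are $\sigma$ times distributions of order $W^{-2k,2}$. Since $m,u\in W^{2k,2}$ and $2k-4>\tfrac d2+1$, Sobolev embedding gives $m,u\in C^{2}$ with room to spare, so every integration by parts below is legitimate.

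Next, test the difference of the two systems against $(m-\mv,u-\uv)$. Since $A_\sigma(\mv,\uv)=0$, this gives
\[
\Big\langle \begin{bmatrix} m-\mv\\ u-\uv\end{bmatrix}, A_\sigma\begin{bmatrix} m\\ u\end{bmatrix}-A_\sigma\begin{bmatrix} \mv\\ \uv\end{bmatrix}\Big\rangle
=\sigma\int_{\Tt^d}\big[(m-\mv)(m+\Delta^{2k}m)+(u-\uv)(u+\Delta^{2k}u)\big]\,\dx .
\]
On the left, expand exactly as in the computation behind Lemma~\ref{Lemma:FOE}. The linear diffusion and drift cross terms cancel by antisymmetry. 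The quadratic Hamiltonian combines with the transport term to give
\[
\tfrac12\int_{\Tt^d}(m+\mv)|Du-D\uv|^2\,\dx .
\]
The coupling contributes $\int (m-\mv)(\log m-\log\mv)\ge0$. The penalization contributes $\int(m-\mv)(\beta_\sigma(m)-\beta_\sigma(\mv))\ge0$ by monotonicity of $\beta_\sigma$. The $\sigma$ terms contribute
\[
\sigma\int_{\Tt^d}\big(|\Delta^k(m-\mv)|^2+|m-\mv|^2+|\Delta^k(u-\uv)|^2+|u-\uv|^2\big)\,\dx .
\]
On the right, integrate $\Delta^{2k}$ by parts $k$ times onto each factor and apply Cauchy--Schwarz, so that the right-hand side is bounded by
\[
\sigma\big(\|m-\mv\|_{L^2}+\|\Delta^k(m-\mv)\|_{L^2}+\|u-\uv\|_{L^2}+\|\Delta^k(u-\uv)\|_{L^2}\big)\,\big(\|m\|_{W^{2k,2}}+\|u\|_{W^{2k,2}}\big).
\]
By Young's inequality, half of this is absorbed into the $\sigma$-terms on the left, leaving a remainder $C\sigma(\|m\|^2_{W^{2k,2}}+\|u\|^2_{W^{2k,2}})\to0$. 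This is the regularity trade-off: the $W^{2k,2}$ regularity of the strong solution is exactly what lets the $\sigma\Delta^{2k}$ perturbation be paid for by the coercive $\sigma$-terms.

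It follows that
\[
\int_{\Tt^d} m|Du-D\uv|^2\,\dx\to0 \qquad\text{and}\qquad \int_{\Tt^d}(m-\mv)(\log m-\log\mv)\,\dx\to0 .
\]
Since $m\ge c_0>0$, the first limit gives $D\uv\to Du$ in $L^2$.

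For $L^1$ convergence of $\mv$, I will use the elementary bound
\[
(s-t)(\log s-\log t)\ge c\min\{|s-t|,|s-t|^2\}\quad\text{for } s\in[c_0,\|m\|_\infty],\ t>0 ,
\]
which holds uniformly on that range of $s$. It implies $|m-\mv|\to0$ in measure. Combined with the uniform integrability of $\mv$ from Corollary~\ref{cor1} (indeed $\mv$ is bounded in $L^{2^*/2}$, or use the entropy bound in Lemma~\ref{Lemma:FOE}), Vitali's theorem gives \eqref{eq:convergencem}.

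For \eqref{eq:convergenceu} it remains to control the mean of $u-\uv$. Integrating the first equations of \eqref{Rmfg} and \eqref{QMFG-log} and subtracting expresses $\int(u-\uv)$ in terms of:
\begin{itemize}
\item[] $\int(|Du|^2-|D\uv|^2)$ and $\int b\cdot(Du-D\uv)$, which tend to zero since $D\uv\to Du$ in $L^2$;
\item[] $\sigma\int(\mv+\Delta^{2k}\mv)=\sigma\int\mv\to0$;
\item[] $\int(\log\mv-\log m)$ and $\int\beta_\sigma(\mv)$.
\end{itemize}
The hardest step is showing that the last two terms tend to zero, since $\log$ is singular at $0$. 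My first attempt avoids this entirely: the $\sigma|u-\uv|^2$ term is not useful directly, so instead I pair the second equations with the constant test function $1$, which yields the identity $\int\uv=\int u$ up to terms of order $\sigma\int\uv$. More precisely, integrating the second equation of \eqref{Rmfg} gives $\int\mv+\sigma\int\uv=1$, which involves $\int\uv$ only with a factor $\sigma$, so this alone does not fix the mean. I therefore fall back on treating the logarithmic term. The bound $\int\log\mv\ge-C$ from Lemma~\ref{Lemma:FOE}, the convergence in measure, and $\log\mv\le\mv$ give the upper half of $\limsup\int(\log\mv-\log m)\le0$ by Vitali. For the lower half I will use the $\beta_\sigma$ bounds in \eqref{eq:1storder1} together with Fatou's lemma; if that does not close the argument, I will combine these bounds to control the lower tail of $\log\mv$. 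With the mean controlled, the Poincaré--Wirtinger inequality then gives \eqref{eq:convergenceu}.
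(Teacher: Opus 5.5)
Your energy argument is the paper's. Testing the difference of the two systems against $(m-\mv,u-\uv)$, cancelling the $a$ and $b$ terms, using the identity that produces $\tfrac12\int_{\Tt^d}(m+\mv)|Du-D\uv|^2\,\dx$, and absorbing by Young's inequality give the same $C\sigma$ bound, and hence $D\uv\to Du$ in $L^2$. Your $L^1$ argument (convergence in measure plus Vitali) is valid but roundabout; the paper simply uses $(s-t)(\log s-\log t)\ge 4(\sqrt s-\sqrt t)^2$. Corollary~\ref{cor1} requires $\|Da\|_{L^\infty}<1$, which this theorem does not assume, so you must take uniform integrability from the entropy bound of Lemma~\ref{Lemma:FOE}.

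The genuine gap is in controlling the mean of $u-\uv$. You never prove $\int_{\Tt^d}\beta_\sigma(\mv)\,\dx\to0$. Your plan for $\liminf_{\sigma\to0}\int_{\Tt^d}(\log\mv-\log m)\,\dx\ge0$ also fails as stated. Fatou's lemma only gives $\liminf\int(\log\mv)_-\ge\int(\log m)_-$, which is the wrong inequality. And \eqref{eq:1storder1} bounds $\int-\beta_\sigma(\mv)$ only by a constant, so it neither makes that term vanish nor rules out concentration of $(\log\mv)_-$ where $\mv$ is near $0$.

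The missing idea is to use the two monotone terms already in your energy inequality, each bounded by $C\sigma$, on sublevel sets of $\mv$, where $m-\mv\ge c_0/2$. For $\sigma\le c_0/2$ we have $\beta_\sigma(m)=0$, and $\beta_\sigma(\mv)=0$ off $\{\mv\le\sigma\}$, so
\[
0\le-\int_{\Tt^d}\beta_\sigma(\mv)\,\dx\le\frac{2}{c_0}\int_{\Tt^d}(m-\mv)\big(\beta_\sigma(m)-\beta_\sigma(\mv)\big)\,\dx\le C\sigma .
\]
For the logarithm, split $\Tt^d$ into two sets. On $\{\mv\le c_0/2\}$ you have $|\log m-\log\mv|\le\frac{2}{c_0}(m-\mv)(\log m-\log\mv)$. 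On $\{\mv>c_0/2\}$ you have $|\log m-\log\mv|\le\frac{2}{c_0}|m-\mv|$. Together these give $\int_{\Tt^d}|\log\mv-\log m|\,\dx\le C\sigma+\frac{2}{c_0}\|m-\mv\|_{L^1(\Tt^d)}\to0$. This is what the paper does.

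Alternatively, you discarded the $\sigma$-terms too quickly. After absorption they still give $\|m-\mv\|_{W^{2k,2}(\Tt^d)}\le C$ uniformly in $\sigma$. Since $W^{2k,2}(\Tt^d)$ embeds compactly into $C(\Tt^d)$, your $L^1$ convergence upgrades to $\mv\to m$ uniformly. Then $\mv\ge c_0/2>\sigma$ for small $\sigma$, so $\beta_\sigma(\mv)\equiv0$ and the logarithmic term is bounded by $\frac{2}{c_0}\|m-\mv\|_{L^1(\Tt^d)}$.
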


\begin{proof}
We start by observing that the condition on $k$ yields that $(m,u)$ satisfies \eqref{QMFG-log} pointwise in $\Tt^d$ by the  Sobolev embedding theorem. 

Let $c_0\in\Rr^+$ be such that $\min_{\Tt^d}m \ge c_0 > 0$, and fix $\sigma \in (0, c_0/2)$. For any such $\sigma$, we have $\beta_\sigma(m) = 0$. Moreover,  $(m, u)$ satisfies in $\mathcal{D}'(\Tt^d)$ the following version of \eqref{Rmfg} with residual terms:
\begin{equation}\label{eq:residual}
\begin{cases}
-u + \div(a(x)Du) -b(x)\cdot Du - \frac{1}{2}|Du|^2 \\
\quad - V(x) + \sigma(m + \Delta^{2k}m)
= -\log(m) + \sigma(m + \Delta^{2k}m),\\[1ex]
m - \div(a(x)Dm) -\div (mb(x)) - \div(m Du) + \sigma(u + \Delta^{2k}u)
= 1 + \sigma(u + \Delta^{2k}u).
\end{cases}
\end{equation}

We note that the regularity of $(m,u)$ allows us to test the equations in \eqref{eq:residual} against functions $\varphi \in W^{2k,2}(\Tt^d)$ for which the terms involving $\Delta^{2k}$ are, as usual, understood via repeated integration by parts:
\begin{align*}
    \langle \Delta^{2k}m, \varphi \rangle = \int_{\Tt^d} \Delta^{k}m\Delta^{k}\varphi\,\dx \quad \text{and} \quad \langle \Delta^{2k}u, \varphi \rangle = \int_{\Tt^d} \Delta^{k}u\Delta^{k}\varphi\,\dx.
\end{align*}
Moreover, for convenience, we use in the sequel the equivalent Hilbert norm on $W^{2k,2}(\Tt^d)$ given by
\[
\|w\|_{W^{2k,2}(\Tt^d)}^2 := \|w\|_{L^2(\Tt^d)}^2 + \|\Delta^{k}w\|_{L^2(\Tt^d)}^2,
\]
which matches the coercive terms produced by the regularization after integration by parts.

Next, we test the difference between the first equation in \eqref{eq:residual} and \eqref{Rmfg} against the test function $(m - m_\sigma)$ and the difference between the second equation in  \eqref{eq:residual} and \eqref{Rmfg} against the  test function $(u - u_\sigma)$. Integrating by parts and summing the resulting equations yields the following identity:
\begin{align}
    &- \frac{1}{2} \int_{\Tt^d} (m - m_\sigma)(|Du|^2 - |Du_\sigma|^2) \,\dx 
    + \int_{\Tt^d} (m - m_\sigma)(\beta_\sigma(m) - \beta_\sigma(m_\sigma))\,\dx \nonumber\\
    &\quad+\, \sigma\Big( \|u - u_\sigma\|^2_{W^{2k,2}(\Tt^d)} + \|m - m_\sigma\|^2_{W^{2k,2}(\Tt^d)}\Big) 
    + \int_{\Tt^d} (m - m_\sigma)(\log(m) - \log(m_\sigma)) \,\dx \nonumber \\
    &\quad+ \int_{\Tt^d} (Du - Du_\sigma)\cdot (mDu - m_\sigma Du_\sigma)\,\dx \nonumber\\
    &\quad= \sigma\int_{\Tt^d} \big[(u - u_\sigma)u + (\Delta^k u - \Delta^k u_\sigma)\Delta^{k}u\big]\,\dx \nonumber \\&\qquad+\,\sigma\int_{\Tt^d}
    \big[(m - m_\sigma)m + (\Delta^k m - \Delta^k m_\sigma)\Delta^{k}m\big]\,\dx , \label{TestingE}
\end{align}
where we note that the terms involving $a(\cdot)$ and $b(\cdot)$ canceled due to the adjoint structure of both systems \eqref{eq:residual} and \eqref{Rmfg}. 

Using the identity
\begin{equation*}
\begin{aligned}
   &- \frac{1}{2}  (m - m_\sigma)(|Du|^2 - |Du_\sigma|^2) + (Du - Du_\sigma)\cdot (mDu - m_\sigma Du_\sigma)\\
   &\quad= \frac12 m|Du|^2 + \frac12 m_\sigma|Du_\sigma|^2 + \frac12 m|Du_\sigma|^2 + \frac12 m_\sigma|Du|^2 - (m + m_\sigma) Du\cdot Du_\sigma\\
   &\quad= \frac12 (m + m_\sigma) |Du - Du_\sigma|^2
\end{aligned}
\end{equation*}
on the left-hand side of \eqref{TestingE} and Young's inequality to estimate the right-hand side of \eqref{TestingE} yields:
\begin{align*}
    &\frac{1}{2}\int_{\Tt^d} (m + m_\sigma) |Du - Du_\sigma|^2 \,\dx 
    +  \int_{\Tt^d}(m - m_\sigma) (\beta_\sigma(m) - \beta_\sigma(m_\sigma))\,\dx  \nonumber\\
    &\quad+ \int_{\Tt^d}(m - m_\sigma) (\log m - \log m_\sigma) \,\dx + \sigma\Big( \|u - u_\sigma\|^2_{W^{2k,2}(\Tt^d)} + \|m - m_\sigma\|^2_{W^{2k,2}(\Tt^d)}\Big) \nonumber\\
    &\quad \leq \sigma\int_{\Tt^d} \bigg( \frac{(u - u_\sigma)^2}{2} + \frac{u^2}{2} + \frac{(\Delta^k u - \Delta^k u_\sigma)^2}{2} + \frac{(\Delta^k u)^2}{2}\bigg)\,\dx \nonumber \\
    &\qquad + \sigma\int_{\Tt^d} \bigg( \frac{(m - m_\sigma)^2}{2} + \frac{m^2}{2} + \frac{(\Delta^k m - \Delta^k m_\sigma)^2}{2} + \frac{(\Delta^k m)^2}{2}\bigg)\,\dx\nonumber \\
    &\quad= \frac\sigma2 \Big( \|u - u_\sigma\|^2_{W^{2k,2}(\Tt^d)} + \|m - m_\sigma\|^2_{W^{2k,2}(\Tt^d)}\Big) +\frac\sigma2 \Big( \|u\|^2_{W^{2k,2}(\Tt^d)} + \|m\|^2_{W^{2k,2}(\Tt^d)}\Big). 
\end{align*}
Absorbing the first term on the right-hand side of the preceding identity into its  left-hand side, we deduce the following key estimate:
\begin{align}\label{eq:key-identity}
   & \frac{1}{2}\int_{\Tt^d} (m + m_\sigma) |Du - Du_\sigma|^2 \,\dx 
    +  \int_{\Tt^d}(m - m_\sigma) (\beta_\sigma(m) - \beta_\sigma(m_\sigma))\,\dx  \nonumber\\
    &\quad+ \int_{\Tt^d}(m - m_\sigma) (\log m - \log m_\sigma) \,\dx + \frac\sigma2\Big( \|u - u_\sigma\|^2_{W^{2k,2}(\Tt^d)} + \|m - m_\sigma\|^2_{W^{2k,2}(\Tt^d)}\Big)  \leq  C \sigma,
\end{align}
where $C= \tfrac12\big(\|u\|^2_{W^{2k,2}(\Tt^d)} + \|m\|^2_{W^{2k,2}(\Tt^d)}\big)$.

To prove the $L^1$ convergence of $m_\sigma$, we utilize the elementary inequality $(a - b)(\log a - \log b) \ge 4 (\sqrt{a} - \sqrt{b})^2$, for $a, b > 0$, that combined with the fact that all terms on the left-hand side of \eqref{eq:key-identity} are nonnegative, gives 
\begin{align*}
    4 \|\sqrt{m} - \sqrt{m_\sigma}\|_{L^2(\Tt^d)}^2 \leq \int_{\Tt^d} (m - m_\sigma)(\log m - \log m_\sigma) \,\dx \leq C\sigma.
\end{align*}
Thus, $m_\sigma^{1/2} \to m^{1/2}$ in $L^2(\Tt^d)$, from which we deduce \eqref{eq:convergencem}.

For the convergence of $u_\sigma$, we use the lower bound $m \ge c_0 > 0$ in the quadratic gradient term in \eqref{eq:key-identity} to get:
\begin{align}
    \frac{c_0}{2} \|Du - Du_\sigma\|_{L^2(\Tt^d)}^2 \leq \int_{\Tt^d} \frac{m + m_\sigma}{2} |D(u - u_\sigma)|^2 \,\dx \leq C\sigma.\label{eq:estDuepsi}
\end{align}    
To control the full $W^{1,2}$ norm, we must control the mean value. 
Integrating the Hamilton--Jacobi equations in \eqref{QMFG-log} and \eqref{Rmfg} over $\Tt^d$ and subtracting the former from the latter, we obtain
\begin{align}
    \int_{\Tt^d}(u - u_\sigma)\,\dx
    = & -\int_{\Tt^d} b(x)\cdot (Du - Du_\sigma)\,\dx-\frac{1}{2} \int_{\Tt^d}\big(|Du|^2 - |Du_\sigma|^2\big)\,\dx
      - \int_{\Tt^d}\beta_\sigma(m_\sigma)\,\dx\nonumber\\
      &- \int_{\Tt^d}\big(\log m_\sigma - \log m\big)\,\dx
      - \sigma\int_{\Tt^d} m_\sigma\,\dx,\label{eq:estmean}
\end{align}
where we used periodicity to eliminate the divergence and $\Delta^{2k}$ terms.
The first, second, and last terms on the right-hand side of \eqref{eq:estmean} tend to zero as $\sigma \to 0$ by \eqref{eq:estDuepsi} and \eqref{eq:convergencem}.
Next, we prove that also the third and fourth terms converge to zero as $\sigma \to 0$.

We have $\beta_\sigma(m)=0$ on $\Tt^d$ because $m\ge c_0$ on $\Tt^d$,  $\beta_\sigma(m_\sigma) = 0$ on the set $\{m_\sigma\ge \sigma\}$ by definition of $\beta_\sigma(\cdot)$, and $m-m_\sigma \geq \tfrac{c_0}{2}$ on the set $\{m_\sigma\le \sigma\}$ because $0<\sigma \leq \tfrac{c_0}{2}$. Hence, using the monotonicity of $\beta_\sigma(\cdot)$ and \eqref{eq:key-identity}, we have that
\begin{align*}
    0&\le -\int_{\Tt^d}\beta_\sigma(m_\sigma)\,\dx = \int_{\{m_\sigma\le \sigma\}}\big(\beta_\sigma(m)-\beta_\sigma(m_\sigma)\big)\,\dx\\
&\le \frac{2}{c_0}\int_{\{m_\sigma\le \sigma\}}(m-m_\sigma)\big(\beta_\sigma(m)-\beta_\sigma(m_\sigma)\big)\,\dx
\le C\sigma, 
\end{align*}
using \eqref{eq:key-identity}.
Therefore, $\big|-\int_{\Tt^d}\beta_\sigma(m_\sigma)\,\dx\big|\to0$ as $\sigma\to0$.

Similarly, using the bound $|\log a - \log b| \leq |a-b| (\min\{a,b\})^{-1}$, for $a, b>0$, together with the monotonicity of $\log(\cdot)$, we deduce that
\begin{align*}
    \bigg| - \int_{\Tt^d}(\log m_\sigma - \log m)\,\dx\bigg| &\leq 
      \frac{2}{c_0}\int_{\{m_\sigma\le \frac{c_0}{2}\}}(m - m_\sigma)(\log m -\log m_\sigma )\,\dx \\&\qquad+
     \int_{\{m_\sigma\ge \frac{c_0}{2}\}} \frac{| m_\sigma -  m|}{\min\{ m_\sigma, m\}}\,\dx \\
     &\leq \frac{2C}{c_0}\sigma + \frac{2}{c_0} \int_{\Tt^d} | m_\sigma -  m|\,\dx.
\end{align*}
This estimate, together with \eqref{eq:convergencem}, yields
\begin{align*}
    \lim_{\sigma\to0}\bigg| - \int_{\Tt^d}(\log m_\sigma - \log m)\,\dx\bigg| =0.
\end{align*}

Consequently,
\[
\lim_{\sigma\to0}\left|\int_{\Tt^d}(u-u_\sigma)\,\dx\right|=0.
\]
Finally, applying the Poincaré--Wirtinger inequality and recalling \eqref{eq:estDuepsi}, we conclude that 
\[
    \|u - u_\sigma\|_{L^2(\Tt^d)} \leq C \left( \|Du - Du_\sigma\|_{L^2(\Tt^d)} + \left| \int_{\Tt^d} (u - u_\sigma) \,\dx \right| \right) \to 0
\]
as $\sigma\to0$, which proves  \eqref{eq:convergenceu}.
\end{proof}

\bibliographystyle{abbrv}
\bibliography{mfgv8_nn, supbib}  

\end{document}